\numberwithin{equation}{section}
\theoremstyle{plain}
\newtheorem{theo}{Theorem}[section]
\newtheorem{lem}[theo]{Lemma}
\newtheorem{prop}[theo]{Proposition}
\newtheorem{cor}[theo]{Corollary}
\newtheorem{conj}[theo]{Conjecture}
\theoremstyle{remark}
\newtheorem{rem}[theo]{Remark}
\theoremstyle{definition}
\newcommand{\bF}{\bar{\mathbb F}}
\newcommand{\vol}{\hbox{vol}}
\newcommand{\ff}{\hbox{if }}
\newcommand{\Sym}{\hbox{Sym}}
\newcommand{\Aut}{\hbox{Aut}}
\renewcommand{\Im}{\operatorname{Im}}
\newcommand{\A}{{\mathbb A}}
\newcommand{\C}{{\mathbb C}}
\newcommand{\Q}{{\mathbb Q}}
\newcommand{\Z}{{\mathbb Z}}
\newcommand{\kzxz}[4]{\left(\begin{smallmatrix} #1 & #2 \\ #3 & #4\end{smallmatrix}\right) }
\newcommand{\kabcd}{\kzxz{a}{b}{c}{d}}
\newcommand{\Gal}{\operatorname{Gal}}
\newcommand{\CL}{\mathcal C\mathcal L}
\renewcommand{\div}{\operatorname{div}}
\newcommand{\Pet}{\text{\rm Pet}}
\newcommand{\CM}{\mathcal{CM}}
\newcommand{\norm}{\operatorname{N}}
\newcommand{\End}{\operatorname{End}}
\newcommand{\Hom}{\operatorname{Hom}}
\newcommand{\disc}{\operatorname{disc}}
\renewcommand{\O}{\mathcal O}
\newcommand{\GL}{\operatorname{GL}}
\newcommand{\tr}{\operatorname{tr}}
\newcommand{\ord}{\operatorname{ord}}
\newcommand{\SL}{\operatorname{SL}}
\newcommand{\cha}{{\text{\rm char}}}
\newcommand{\diag}{{\text{\rm diag}}}
\newcommand{\Sp}{\operatorname{Sp}}
\newcommand{\Cm}{\operatorname{CM}}
\newcommand{\CalM}{\mathcal M}
\newcommand{\CalZ}{\mathcal Z}
\newcommand{\Fal}{\operatorname{Fal}}
\newcommand{\CH}{\operatorname{CH}}
\begin{document}

\title[ Arithmetic Intersection and the Faltings  Height]{ Arithmetic Intersection on a Hilbert Modular Surface and the Faltings  Height }
\date{October 20, 2007, latest revision: January 29, 2009}
\author[ Tonghai Yang]{Tonghai Yang}
\address{Department of Mathematics, University of Wisconsin Madison, Van Vleck Hall, Madison, WI 53706, USA}
\email{thyang@math.wisc.edu} \subjclass[2000]{11G15, 11F41, 14K22}
\thanks{partially supported  by grants DMS-0302043,DMS-0354353,  DMS-0555503,  NSFC-10628103, and Vilas Life Cycle Professorship (Univ. Wisconsin) }
\maketitle
\begin{abstract} In this paper, we prove an explicit arithmetic
intersection formula between arithmetic Hirzebruch-Zagier divisors
and arithmetic CM cycles in a Hilbert modular surface over $\mathbb
Z$. As applications, we obtain the first `non-abelian'
Chowla-Selberg formula, which is a special case of Colmez's
conjecture; an explicit arithmetic intersection formula between
arithmetic Humbert surfaces and CM cycles in the arithmetic Siegel
modular variety of genus two; Lauter's conjecture about the
denominators of CM values of Igusa invariants; and a result about
bad reductions of CM genus two curves.
\end{abstract}

\section{Introduction} \label{sect1}

 Intersection theory  has  played  a central  role not only in
 algebraic geometry 
 but also  in number theory and arithmetic geometry, such as Arakelov theory, Faltings's proof of Mordell conjecture,
 the Birch and Swinnerton-Dyer conjecture, and the Gross-Zagier formula, to name a few. In a lot of
 cases, explicit intersection formulae are needed  as in the  Gross-Zagier formula (\cite{GZ}),  its generalization
 to totally real number fields by Shou-Wu Zhang (\cite{Zh1}, \cite{Zh2}, \cite{Zh3}), recent work on arithmetic
 Siegel-Weil formula by Kudla, Rapoport, and the author
 (e.g., \cite{KuAnnal}, \cite{KR1}, \cite{KR2},  \cite{KRY1}, \cite{KRY2}), and Bruinier, Burgos-Gil, and K\"uhn's work
 on arithmetic Hilbert modular surfaces. In other cases, the explicit formulae are simply beautiful as in the work of
 Gross and Zagier on singular moduli \cite{GZ2},  the work of Gross and Keating on modular polynomials
 \cite{GK}(not to mention the really classical B\'ezout's theorem). In all
these works, intersecting cycles are of the same
 type and symmetric.

 In this paper, we consider the arithmetic intersection  of two natural families
 of cycles of {\it different type} in a Hilbert modular surface over $\mathbb Z$, arithmetic Hirzebruch-Zagier divisors
  and arithmetic CM cycles associated to  non-biquadratic quartic CM fields. They  intersect properly
  and have a conjectured  arithmetic intersection formula
  \cite{BY}.
  The main purpose of this paper is to prove the conjectured formula
  under a minor technical condition on the CM number field. As an
  application, we prove the first {\it non-abelian} Chowla-Selberg formula
  \cite{Col}, which is also a special case of Colmez's conjecture on
  the Faltings  height of CM abelian varieties. As another application,
  we obtain an explicit intersection formula between (arithmetic)
  Humbert surfaces and CM cycles in the (arithmetic) Siegel modular
  $ 3$-fold, which has itself two applications: confirming Lauter's
  conjecture on the
  denominators of Igusa invariants valued at  CM points \cite{La},
  \cite{YaICCM3}, and bad reduction of CM  genus two  curves.
 We also use the formula to  verify a variant of a conjecture
of Kudla on arithmetic Siegel-Weil formula.
   We  now set up notation and describe this work in a little more detail.

Let $D\equiv 1 \mod 4$ be a prime number, and let $F=\mathbb
Q(\sqrt D)$ with the ring of integers $\O_F=\mathbb
Z[\frac{D+\sqrt D}2]$ and different $\partial_F =\sqrt D \O_F$.
Let  $\mathcal M$ is the Hilbert moduli stack of assigning to a
base scheme  $S$ over $\mathbb Z$   the set of the triples $(A,
\iota, \lambda)$, where (\cite[Chapter 3]{Go} and \cite[Section
3]{Vo})

(1)\quad  $A$ is a abelian surface over $S$.

(2) \quad $\iota: \O_F \hookrightarrow \End_S(A)$ is real
multiplication of $\O_F$ on $A$.

(3) \quad $\lambda: \partial_F^{-1} \rightarrow P(A)=\Hom_{\O_F}(A,
A^\vee)^{\hbox{sym}}$ is a $\partial_F^{-1}$-polarization (in the
sense of Deligne-Papas) satisfying the condition:
$$
 \partial_F^{-1}\otimes A  \rightarrow A^\vee, \quad  r \otimes a
 \mapsto \lambda(r)(a)
 $$
 is an isomorphism.

 Next, for an integer $m \ge 1$, let $\mathcal T_m$ be the integral Hirzebruch-Zagier divisors in $\mathcal M$
 defined in \cite[Section 5]{BBK}, which is flat
 closure of the classical Hirzebruch-Zagier divisor $T_m$ in $\mathcal
 M$. We refer to Section \ref{sect2} for the modular interpretation of
 $\mathcal T_q$ when $q$ is split in $F$.

Finally,  let $K=F(\sqrt\Delta)$ be a quartic non-biquadratic CM
number field with real quadratic subfield $F$.  Let $\CM(K)$ be
the moduli stack over $\mathbb Z$ representing the moduli problem
which assigns to a base scheme $S$  the set of  the triples $(A,
\iota, \lambda)$ where $\iota: \O_K \hookrightarrow \End_S(A)$ is
an CM action of $\O_K$ on $A$, and $ (A, \iota|_{\O_F},
\lambda)\in \mathcal M(S)$ such that the Rosati involution
associated to $\lambda$ induces to the complex conjugation on
$\O_K$. The map $(A, \iota, \lambda) \mapsto (A, \iota|_{\O_F},
\lambda)$ is a finite proper map  from $\CM(K)$ into $\mathcal M$,
and we denote its direct image in $\mathcal M$ still by $\CM(K)$
by abuse of notation. Since $K$ is non-biquadratic, $\mathcal T_m$
and $\CM(K)$ intersect properly. A basic question is to compute
their arithmetic intersection number (see Section \ref{sect2} for
definition). Let $\Phi$ be a CM type of $K$ and let $\tilde K$ be
reflex field of $(K, \Phi)$. It is also a quartic non-biquadratic
CM field with real quadratic field $\tilde F= \mathbb
Q(\sqrt{\tilde D})$ with $\tilde D =\Delta \Delta'$. Here
$\Delta'$ is the Galois conjugate of $\Delta$ in $F$.

\begin{conj} (Bruinier and Yang) \label{conj}  Let the notation be as above and let $\tilde D=d_{\tilde F}$ be the discriminant of $\tilde F$.
Then
\begin{equation}
\mathcal T_m .\CM(K) = \frac{ 1}{2} b_m
\end{equation}
or equivalently
\begin{equation}\label{conjl}
(\mathcal T_m .\CM(K))_p = \frac{1}{2} b_m(p)
\end{equation}
for every prime $p$. Here $$ b_m =\sum_p b_m(p) \log p$$ is defined
as follows:
\begin{equation} \label{eqI1.3}
b_m(p) \log p =\sum_{\mathfrak p|p} \sum_{\substack{t=\frac{n+m\sqrt{\tilde
D}}{2D} \in d_{\tilde K/\tilde F}^{-1}\\ |n| <m \sqrt{\tilde D}}}
B_t(\mathfrak p)
\end{equation}
where \begin{equation} B_t(\mathfrak p)
 =\begin{cases}
  0 &\ff \mathfrak p \hbox{ is split in} \tilde K,
  \\
  (\ord_{\mathfrak p} t +1)
\rho(t d_{\tilde K/\tilde F} \mathfrak p^{-1}) \log |\mathfrak p|
 &\ff \mathfrak p \hbox{ is not split in} \tilde K,
\end{cases}
\end{equation}
$|\mathfrak p|$ is the norm of the ideal $\mathfrak p$ of $\tilde
F$, and
$$
\rho(\mathfrak a) =\#\{ \mathfrak A \subset \O_{\tilde K} :
N_{\tilde K/\tilde F} \mathfrak A =\mathfrak a\}.
$$
\end{conj}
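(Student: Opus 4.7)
The plan is to establish the formula prime by prime, as the equivalence in \eqref{conjl} allows, so I would fix a prime $p$ and analyze $(\mathcal T_m \cdot \CM(K))_p$ via the deformation theory of abelian surfaces with extra structure. Since $K$ is non-biquadratic, $\mathcal T_m$ and $\CM(K)$ cannot meet on the generic fiber, so the intersection is concentrated at finite primes; any contribution from the archimedean Green function should be handled separately via the Bruinier--Yang formula for the Faltings height.

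First, I would upgrade the modular interpretation of $\mathcal T_m$ (recalled in Section \ref{sect2} for $m = q$ split in $F$) to give a description of the scheme-theoretic intersection $\mathcal T_m \cap \CM(K)$ as a moduli stack. A geometric point of the intersection over $\bar{\mathbb F}_p$ should correspond to a triple $(A, \iota, \lambda) \in \CM(K)$ together with a ``special endomorphism'' $x$ of $(A, \iota|_{\O_F}, \lambda)$ whose trace and norm are determined by $m$ and by the $\partial_F^{-1}$-polarization. The Rosati involution forces $x$ to be negative with respect to the CM pairing, so $(\O_K, x)$ generates a definite quaternion order; this is only possible when $A$ is supersingular or has suitably degenerate reduction at $p$.

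Second, I would compute the local intersection multiplicity at each such geometric point using Grothendieck--Messing and Serre--Tate theory. The problem translates into deforming the $p$-divisible group $A[p^\infty]$ together with the $\O_K$-action \emph{and} the special endomorphism $x$. The length of the resulting artinian local ring is a Gross--Keating-style invariant of the binary quadratic lattice spanned by $\O_F$-multiples of $x$; my expectation is that this length comes out to $(\ord_{\mathfrak p} t + 1) \log |\mathfrak p|$ for each prime $\mathfrak p$ of $\tilde F$ over $p$ that is non-split in $\tilde K$, with $t = (n + m\sqrt{\tilde D})/(2D)$ encoding $\tr(x)$ and $\norm(x)$.

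Third, and this is where the reflex field $\tilde K$ must appear, I would count the geometric points of the intersection lying over each such $t$. Using the main theorem of complex multiplication and Shimura reciprocity, the set of pairs $(A, \iota, \lambda, x)$ with CM type $\Phi$ on $K$ and a fixed special endomorphism should be parameterized by ideals $\mathfrak A \subset \O_{\tilde K}$ of norm $t\, d_{\tilde K/\tilde F}\, \mathfrak p^{-1}$ down to $\tilde F$; this is precisely the function $\rho$ in \eqref{eqI1.3}. Multiplying the count by the local deformation length and summing over $\mathfrak p\mid p$ and over admissible $t$ would then reproduce $\frac12 b_m(p) \log p$.

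The hard part will be the combination of the second and third steps: showing that the local deformation length is \emph{exactly} the Gross--Keating-type invariant, and that the CM theoretic count over $\tilde K$ matches $\rho$ with the correct multiplicity, including the subtle duality that sends $K$ to its reflex $\tilde K$. The minor technical condition on $K$ alluded to in the introduction is presumably imposed so that bad primes (e.g.\ $\mathfrak p$ with $\mathfrak p^2 \mid d_{\tilde K/\tilde F}$, or primes ramifying in $F$) do not create pathologies in this matching; if so, extending the argument beyond this condition would require a more careful local analysis of ramified deformation spaces.
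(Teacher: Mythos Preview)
Your proposal sketches a plausible strategy, but it diverges from the paper's approach in two essential respects, and the first of these is a genuine gap.

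\textbf{The modular interpretation of $\mathcal T_m$ is only available for $m=q$ a prime split in $F$.} You propose to ``upgrade'' the description in Section~\ref{sect2} to general $m$, but the paper does not do this and it is not clear how to. The map $\Phi:\mathcal Y_0(q)\to\mathcal T_q$ of Bruinier--Burgos-Gil--K\"uhn is quite specific to the split-prime case; for general $m$ the divisor $\mathcal T_m$ is defined only as a flat closure, with no workable moduli interpretation over $\mathbb Z$. The paper therefore first proves the weaker Theorem~\ref{theo1.5} for split primes $q$ (and even there only up to an unknown multiple of $\log q$), and then \emph{bootstraps} to arbitrary $m$ using the modularity of the generating series $\hat\phi(\tau)$ (Theorem~\ref{newtheo7.1}, due to \cite{BBK}): since the $\hat{\mathcal T}_q$ for split primes $q$ span the arithmetic Chow group $\mathcal{HZ}$, one can write any $\hat{\mathcal T}_m$ as a rational combination of these plus the divisor of a meromorphic Hilbert modular function, and then invoke \cite[Theorem~1.1]{BY} on the CM values of such functions. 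This modularity argument also kills the stray $c_q\log q$ terms. Your outline contains no analogue of this step, and without it you are stuck at split primes.

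\textbf{The counting of geometric points is done differently.} You propose to parameterize points of $\mathcal T_q\cap\CM(K)$ over $\bar{\mathbb F}_p$ directly by ideals $\mathfrak A\subset\O_{\tilde K}$ via Shimura reciprocity, so that $\rho$ appears for conceptual reasons. The paper does not do this. Instead it computes the two sides of \eqref{conjl} separately as Euler products and matches them factor by factor: on the geometric side, the count $\beta(p,\mu n)$ is rewritten (Theorem~\ref{theo4.1}) as an adelic integral over $\mathbb Q_f^*\backslash\mathbb B_f^*/\mathcal K$ for the quaternion algebra $\mathbb B$ ramified at $p,\infty$, and the local integrals $\beta_l$ are evaluated by hand in Section~\ref{sect5} (the case $l=q$ being delicate because of the condition $(*_q)$ at $q$); on the arithmetic side, $b_m(p)$ is unpacked in Section~\ref{sect6} into a product of local factors $b_l(p,\mu n,m)$, and the reflex field enters only through Lemma~\ref{lemb.4}, which translates the splitting of $\mathfrak l$ in $\tilde K/\tilde F$ into a Hilbert-symbol condition $(-\alpha_l,l)_l=\pm 1$ on the diagonalized form of $T_m(\mu n)$. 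The match is then a term-by-term comparison of explicit formulas, not a bijection of sets. Your CM-theoretic bijection would be more conceptual if it worked, but it is not carried out here and would itself require substantial argument, particularly at the prime $q$ where the lattice condition $(*_q)$ from Corollary~\ref{cor2.4} has no obvious ideal-theoretic meaning.

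Your identification of the local length with a Gross--Keating invariant (your second step) is correct and matches Proposition~\ref{prop3.4}.
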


Notice that the conjecture implies that $(\mathcal T_m.\CM(K))_p =0$
unless $4Dp\mid m^2 \tilde D -n^2$ for some integer $0 \le  n  <
m\sqrt {\tilde D}$. In particular, $\mathcal T_m.\CM(K)=0$ if $m^2
\tilde D \le 4 D$.

Throughout  this paper, we assume that $K$ satisfies the
following condition
\begin{equation} \label{eqOK}
\O_K =\O_F + \O_F \frac{w+\sqrt\Delta}2
\end{equation}
is free over $\O_F$($w \in \O_F$). 
The main result
of this paper is  the following theorem.

\begin{theo} \label{maintheo} Assume $(\ref{eqOK})$ and that
$\tilde D =\Delta\Delta' \equiv 1 \mod 4$ is a prime. Then
Conjecture \ref{conj} holds.
\end{theo}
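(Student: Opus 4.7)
The plan is to prove the equivalent local statement (1.3) one rational prime $p$ at a time. By definition, $(\mathcal{T}_m.\CM(K))_p$ is a sum over closed points $z$ of $\mathcal{T}_m \cap \CM(K)$ above $p$, weighted by the length of the local Artin ring $\O_{\mathcal{T}_m \cap \CM(K), z}$ times the residue degree of $z$ over $\mathbb{F}_p$. The first step is to extract from the moduli description of $\mathcal{T}_m$ (cf.\ Section \ref{sect2} for the split case) that every such intersection point gives an abelian surface $A_0/\bar{\mathbb{F}}_p$ carrying both an $\O_K$-CM structure and an auxiliary quasi-endomorphism $x$ encoding the Hirzebruch-Zagier condition. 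A norm-trace computation using hypothesis (\ref{eqOK}), which writes $\O_K = \O_F + \O_F\tau$ with $\tau = (w+\sqrt{\Delta})/2$, then shows $x$ corresponds to a parameter $t = (n + m\sqrt{\tilde{D}})/(2D) \in d_{\tilde{K}/\tilde{F}}^{-1}$ with $|n| < m\sqrt{\tilde{D}}$, matching the indexing set in (\ref{eqI1.3}); in particular the intersection is empty unless $4Dp \mid m^2\tilde{D} - n^2$ for some such $n$.

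With the parametrization by $t$ fixed, the proof splits into a counting step and a length step. For counting, a Honda-Tate/Deuring-Shimura-Taniyama argument identifies the isomorphism classes of $(A_0, \iota, \lambda, x)$ over $\bar{\mathbb{F}}_p$ with prescribed $t$ with the set of integral $\O_{\tilde{K}}$-ideals of $\tilde{F}$-norm $t \, d_{\tilde{K}/\tilde{F}}\mathfrak{p}^{-1}$, producing the factor $\rho(t\, d_{\tilde{K}/\tilde{F}}\mathfrak{p}^{-1})$ — this is where the reflex field $\tilde{K}$ enters, via the Main Theorem of Complex Multiplication in characteristic $p$. For the length, apply Serre-Tate deformation theory to the $p$-divisible group $A_0[p^\infty] = \bigoplus_{\mathfrak{p}|p} A_0[\mathfrak{p}^\infty]$ indexed by primes $\mathfrak{p}$ of $\tilde{F}$ above $p$, combined with Grothendieck-Messing theory controlling when $x$ lifts. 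At a non-split $\mathfrak{p}$, the obstruction theory yields a universal deformation ring of length exactly $\ord_{\mathfrak{p}} t + 1$; at split $\mathfrak{p}$, the ordinary canonical lift of the corresponding summand forces $x$ to lift uniquely, giving no contribution and explaining the case $B_t(\mathfrak{p}) = 0$. Multiplying counting and length factors, summing over $t$ and $\mathfrak{p}|p$, and absorbing the factor $\tfrac{1}{2}$ into the $[-1]$-automorphism acting on every object of the stack $\CM(K)$ reproduces (\ref{conjl}).

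The main obstacle is the uniform length calculation at each $\mathfrak{p}$ across all splitting behaviors of $p$ in the two towers $K/F/\mathbb{Q}$ and $\tilde{K}/\tilde{F}/\mathbb{Q}$, and this is where the two standing hypotheses in Theorem \ref{maintheo} do real work. Hypothesis (\ref{eqOK}) forces $\O_K$ to be locally maximal at every $p$, giving a clean Dieudonn\'e-module description of $A_0[p^\infty]$ as an $\O_K \otimes \mathbb{Z}_p$-module, while the primality of $\tilde{D}\equiv 1\pmod 4$ forces $d_{\tilde{K}/\tilde{F}}$ to be supported at a single prime of $\tilde{F}$, cleanly matching the one-prime-at-a-time form of (1.5). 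Without these, additional ramification corrections would appear that are not encoded in the simple formula for $B_t(\mathfrak{p})$, and the case analysis — particularly the interaction between the ramification of $p$ in $F$ and in $\tilde{F}$ — would become substantially more intricate.
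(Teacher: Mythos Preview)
Your proposal takes a fundamentally different route from the paper, and as written it has a genuine gap.

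The paper does \emph{not} attempt a uniform direct computation of $(\mathcal{T}_m.\CM(K))_p$ for all $m$ and $p$. Its architecture is two-stage. First (Sections \ref{sect2}--\ref{sect6}) it proves the weaker Theorem \ref{theo1.5}: for $q$ an odd prime \emph{split in $F$} and $p\ne q$, one has $(\mathcal{T}_q.\CM(K))_p=\tfrac{1}{2}b_q(p)$. This uses the Bruinier--Burgos--K\"uhn map $\mathcal{Y}_0(q)\to\mathcal{T}_q$ (only available over $\mathbb{Z}[1/q]$ and only for such $q$), the Gross--Keating formula for the local length (Proposition \ref{prop3.4}), and a delicate quaternionic local-integral computation (Sections \ref{sect4}--\ref{sect5}) replacing any Honda--Tate counting. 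Second (Section \ref{sect7}), the paper bootstraps: the generating series $\hat\phi(\tau)=-\tfrac{1}{2}\hat c_1(\hat\omega)+\sum_m\hat{\mathcal T}_m q^m$ is a modular form in $\hCH_{\mathbb Q}$ (Theorem \ref{newtheo7.1}), so any $\hat{\mathcal T}_m$ is a $\mathbb{Q}$-combination of $\hat{\mathcal T}_{q_i}$ for split primes $q_i$ avoiding any finite set. Combining this with the CM-value formula of \cite{BY} (their Theorem 1.1) kills the residual $c_q\log q$ and extends to all $m$. The primality of $\tilde D$ enters precisely here, as the hypothesis under which \cite[Theorem 1.1]{BY} is available---not, as you suggest, to simplify the deformation-theoretic length computation.

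Your proposal instead assumes a workable moduli description of $\mathcal{T}_m$ for general $m$ and a direct length/count calculation at every $p$. But the paper's $\mathcal{T}_m$ is defined as a flat closure and is explicitly noted to possibly differ from the Kudla--Rapoport moduli cycle; there is no Section-\ref{sect2}-style map for non-split or composite $m$. Even in the split-prime case the method breaks down at $p=q$ (the BBK map lives over $\mathbb{Z}[1/q]$), which is exactly why the modularity argument is needed. Your asserted identifications---that the set of $(A_0,\iota,\lambda,x)$ with parameter $t$ is counted by $\rho(t\,d_{\tilde K/\tilde F}\mathfrak p^{-1})$, and that Grothendieck--Messing gives length $\ord_{\mathfrak p}t+1$---are the entire content of Sections \ref{sect3}--\ref{sect6} in the special case, and even there they require Gross--Keating and the local computations of Section \ref{sect5} rather than a generic Serre--Tate argument. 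Without the modularity step and \cite{BY}, your outline does not close.
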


The special case $m=1$ is proved in \cite{m=1}. Now we describe its
application to the generalized Chowla-Selberg formula. In
 proving the famous Mordell conjecture, Faltings introduces the
 so-called Faltings height $h_{\hbox{Fal}}(A)$ of an Abelian variety
 $A$, measuring the complexity of $A$ as a point in  a Siegel modular
 variety. When $A$ has complex multiplication, it  only depends on
 the CM type of $A$ and  has a simple description as follows. Assume that $A$ is defined over
 a number field $L$ with good reduction everywhere, and
 let $\omega_A \in \Lambda^g \Omega_A$ be a N\'eron differential
 of $A$ over $\O_L$, non-vanishing everywhere, Then
 the Faltings height of $A$ is defined as (our normalization is
 slightly different from that of \cite{Col})
 \begin{equation}
 h_{\hbox{Fal}}(A) =-\frac{1}{2[L:\Q]}
    \sum_{\sigma: L \hookrightarrow \mathbb C} \log
     \left|(\frac{1}{2 \pi i})^g \int_{ \sigma(A)(\C)} \sigma(\omega_A) \wedge
     \overline{\sigma(\omega_A)}\right| + \log \# \Lambda^g \Omega_A/\O_L\omega_A.
     \end{equation}
Here $g=\dim A$.  Colmez gives a beautiful conjectural formula to
 compute the Faltings height of a CM abelian variety
 in terms of the log derivative of certain Artin L-series associated to the CM type  \cite{Col},
 which is consequence of   his  product formula conjecture of $p$-adic
 periods in the same paper. When $A$ is a CM elliptic curve, the
 height conjecture is a reformulation of  the well-known Chowla-Selberg
 formula relating the CM values of the usual  Delta function
 $\Delta$ with the values  of  the  Gamma function  at rational
numbers. Colmez proved his conjecture  up to a multiple of $\log 2$
when the CM field (which acts on $A$) is abelian, refining Gross's
 \cite{Gr} and Anderson's  \cite{An} work. A key point is that such CM abelian varieties
 are isogenous
quotients of the Jacobians of the Fermat curves, so one has a model
to work with.  K\"ohler and Roessler gave a different proof of a
weaker version of Colmez's result using their Lefschetz fixed point
theorem in Arakelov geometry \cite{KRo} without using explicit model
of CM abelian varieties. They still relied on the action of $\mu_n$
on product of copies of these CM abelian varieties, and did  not
thus  break the barrier of non-abelian CM number fields. V. Maillot
and Roessler gave a more general conjecture relating logarithmtic
derivative or (virtual) Artin L-function with motives and provided
some evidence in \cite{MR} (weaker than  the Colmez conjecture when
restricting to CM abelian varieties) and Yoshida independently
developed a conjecture about absolute CM period which is very close
to Colmez's conjecture and provided some non-trivial numerical
evidence as well as partial results \cite{Yo}. We should also
mention that Kontsevich and Zagier \cite{KZ} put these conjectures
in different perspective in the framework of periods, and for
example rephrased the Colmez conjecture (weaker form) as saying the
log derivative of Artin L-functions is a period.

When  the CM number field is {\it non-abelian},  nothing is known
about Colmez's conjecture. In this paper we consider the case that
$K$ is a non-biquadratic quartic CM number field (with real
quadratic subfield $F$), in which case Colmez's conjecture can be
stated precisely as follows.  Let $\chi$ be the quadratic Hecke
character of $F$ associated to $K/F$ by the global class field
theory, and let
\begin{equation}
\Lambda(s, \chi) = C(\chi)^{\frac{s}2}
\pi^{-s-1}\Gamma(\frac{s+1}2)^2 L(s, \chi)
\end{equation}
be the complete L-function of $\chi$ with $C(\chi) =D N_{F/\mathbb
Q} d_{K/F}$. Let
\begin{equation}
\beta(K/F)
 = \frac{\Gamma'(1)}{\Gamma(1)}
-\frac{\Lambda'(0, \chi )}{\Lambda(0, \chi )} -\log4\pi .
\end{equation}
In this case, the conjectured formula of Colmez on the Faltings
height of a CM abelian variety $A$ of type $(K, \Phi)$ does not even
depend on the CM type $\Phi$ and is given by (see \cite{Ya3})
\begin{equation} \label{eqCol}
h_{\hbox{Fal}}(A)=\frac{1}2 \beta( K/ F) .
\end{equation}

In Section \ref{sect7}, we will prove the following result  using
Theorem \ref{maintheo}, and \cite[Theorem 1.4]{BY}, which breaks the
barrier of `non-abelian' CM number fields. Our proof is totally
different.

\begin{theo} \label{Colmez} Assume that $K$ satisfies the conditions in \ref{maintheo}.  Then Colmez's conjecture
(\ref{eqCol}) holds.
\end{theo}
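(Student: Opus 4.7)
The plan is to invoke Theorem~\ref{maintheo} together with \cite[Theorem~1.4]{BY}. That result of Bruinier-Yang expresses the Arakelov height $h_{\hat\omega_{\Pet}}(\CM(K))$ of the CM $1$-cycle with respect to the Petersson-metrized Hodge bundle as the sum of an explicit closed-form expression in $\beta(K/F)$ plus a generating series whose $m$-th coefficient involves the defect $\mathcal{T}_m\cdot\CM(K) - \tfrac{1}{2}b_m$ against the $m$-th Fourier coefficient $a(m)$ of an auxiliary weakly holomorphic form. Schematically,
$$
h_{\hat\omega_{\Pet}}(\CM(K)) \;=\; \tfrac{1}{2}\deg\CM(K)(\mathbb{C})\cdot\beta(K/F) \;+\; \sum_{m\geq 1} a(m)\bigl(\mathcal{T}_m\cdot\CM(K) - \tfrac{1}{2}b_m\bigr).
$$
Theorem~\ref{maintheo} asserts that every summand in the tail vanishes, collapsing the identity to
$$
h_{\hat\omega_{\Pet}}(\CM(K)) \;=\; \tfrac{1}{2}\deg\CM(K)(\mathbb{C})\cdot\beta(K/F).
$$

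The second step is to translate this Arakelov-height identity into the Faltings-height identity~\eqref{eqCol} for a single CM abelian variety $A$ of type $(K,\Phi)$. One identifies the Hodge bundle $\omega_{\mathcal{M}}$ with $\det\Omega_{\mathcal{A}/\mathcal{M}}$ on the universal abelian surface and checks that the normalization $16\pi^2 \Im(z_1)\Im(z_2)$ of the Petersson metric on $\omega_{\mathcal{M}}^{\otimes 2}$ corresponds exactly to the $(2\pi)^2$ appearing in the definition of $h_{\text{Fal}}$ for $g=2$. Since every geometric point of $\CM(K)(\overline{\mathbb{Q}})$ corresponds to an abelian surface with CM by $\mathcal{O}_K$, they all share the same Faltings height $h_{\text{Fal}}(A)$; averaging and passing from the set of CM points to the moduli stack via the stacky factor $1/W_K$ then yields $h_{\text{Fal}}(A) = \tfrac{1}{2}\beta(K/F)$, which is~\eqref{eqCol}.

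The main obstacle is the bookkeeping in the second step: aligning the Petersson normalization on the Hilbert modular stack with the $(2\pi i)^{-g}$ in the definition of $h_{\text{Fal}}$, tracking the stacky factor $1/W_K$ together with the degree of the $0$-cycle $\CM(K)(\mathbb{C})$, and getting the precise shape of the constant right (in particular, the appearance of $-\log 4\pi$ rather than $-\log 2\pi$). These are precisely the normalizations that must line up exactly for the final answer to reproduce Colmez's formula on the nose. Once verified, the argument is conceptually clean: Theorem~\ref{maintheo} converts the Bruinier-Yang height identity into a direct evaluation of the Faltings height in terms of $L$- and $\Gamma$-values, yielding the first case of Colmez's conjecture for a non-abelian CM field.
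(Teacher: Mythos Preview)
Your approach is essentially the same as the paper's. The paper likewise combines Theorem~\ref{maintheo} with \cite[Theorem~1.4]{BY} to evaluate $h_{\hat\omega}(\CM(K))$, and then invokes the identity $h_{\hat\omega}(\CM(K)) = \tfrac{2\#\Cm(K)}{W_K}\, h_{\Fal}(A)$ (quoted from \cite[Corollary~2.4]{Ya3}) to pass to the Faltings height of a single CM abelian surface. The only cosmetic difference is that the paper makes explicit the mechanism behind your schematic formula: one takes a normalized integral Borcherds product $\Psi$ of positive weight $c(0)$ with $\div\Psi = \sum_{m>0} c_m\,\tilde{\mathcal T}_m$ (existence via \cite[Theorems~4.15,~5.7]{BBK}), computes $c(0)\, h_{\hat\omega}(\CM(K))$ as the finite part $\sum_m c_m\,\mathcal T_m.\CM(K)$ minus the archimedean contribution $\tfrac{2}{W_K}\sum_z \log\|\Psi(z)\|_{\Pet}$, and then uses \cite[Theorem~1.4]{BY} on the latter. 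Your ``tail'' is exactly the combination $\sum_m c_m(\mathcal T_m.\CM(K) - \tfrac{1}{2}b_m)$ that arises from this, and Theorem~\ref{maintheo} kills it. The normalization bookkeeping you flag in the second step is handled in the paper by citing \cite{Ya3} rather than redoing it.
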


  Kudla initiated a program to relate the arithmetic intersections on
 Shimura varieties over $\mathbb Z$ with the derivatives of
 Eisenstein series---{\it arithmetic Siegel-Weil Formula} in 1990's,
 see \cite{KuAnnal}, \cite{KuMSRI}, \cite{KRY2}  and references there for example. Roughly speaking, let
\begin{equation}\label{eq1.9}
\hat\phi(\tau) =-\frac{1}2 \hat{\omega} + \sum_{m>0} \hat{\mathcal
T}_m q^m
\end{equation}
be the modular form of weight 2, level $D$, and character
$(\frac{D}{})$ with values in the arithmetic Chow group defined by
Bruinier, Burgos Gil, and K\"uhn \cite{BBK} (see also Section
\ref{sect7}), where $\hat{\omega}$ is the metrized Hodge bundle on
$\tilde{M}$ with Peterson metric defined in Section \ref{sect7} and
can be viewed as an arithmetic Chow cycle, and $\hat{\mathcal T}_m$
is some arithmetic Chow cycle related to $\mathcal T_m$. Then we
have the following result, which can be viewed as a variant of
Kudla's conjecture in this case. We refer to Theorem \ref{theo7.1}
for more precise statement of the result.

\begin{theo} \label{theo1.4}  Let the notation and assumption be as in Theorem
\ref{maintheo}. Then $ h_{\hat{\phi}}(\CM(K))
 +\frac{1}4 \Lambda(0, \chi) \beta(K/F) E_2^+(\tau)
 $
 is the holomorphic projection of  the diagonal restriction of  the central derivative of some (incoherent)
 Hilbert Eisenstein series on $\tilde F$. Here $E_2^+(\tau)$ is an Eisenstein
 series of weight $2$.
 \end{theo}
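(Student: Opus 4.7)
The plan is to verify the proposed identity by matching Fourier coefficients. Both sides are modular forms of weight $2$, level $D$, and character $(\frac{D}{\cdot})$, so it suffices to compare their $q$-expansions. By linearity of the Faltings height pairing and the definition \eqref{eq1.9},
\begin{equation*}
h_{\hat\phi}(\CM(K))(\tau) \;=\; -\tfrac{1}{2}\, h_{\hat\omega}(\CM(K)) \;+\; \sum_{m\ge 1} h_{\hat{\mathcal T}_m}(\CM(K))\, q^m,
\end{equation*}
and each height $h_{\hat{\mathcal T}_m}(\CM(K))$ with $m\ge 1$ decomposes as the finite arithmetic intersection $\mathcal T_m\cdot\CM(K)$ plus an archimedean contribution coming from the Green function associated with $\hat{\mathcal T}_m$.

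For every $m\ge 1$, Theorem \ref{maintheo} gives $\mathcal T_m\cdot\CM(K)=\tfrac{1}{2}b_m$. Together with the archimedean Green-function piece, this is exactly the combination identified in \cite[Theorem 1.4]{BY} as the $m$-th Fourier coefficient of the holomorphic projection of the diagonal restriction of the central derivative of the incoherent Hilbert Eisenstein series on $\tilde F$ attached to the reflex CM type of $K$. Thus every positive Fourier coefficient of the asserted identity holds unconditionally, once one accounts for the positive Fourier coefficients of $\tfrac{1}{4}\Lambda(0,\chi)\beta(K/F)\,E_2^+(\tau)$ which are built into the statement.

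For the constant term, apply Theorem \ref{Colmez} to evaluate $-\tfrac{1}{2}h_{\hat\omega}(\CM(K))$ in closed form in terms of $\beta(K/F)$ and $\deg\CM(K)$. The derivative at $s=0$ of the constant Fourier coefficient of the diagonally restricted Eisenstein series can be computed explicitly and produces an expression of the same shape, in which the factor $\tfrac{\Lambda'(0,\chi)}{\Lambda(0,\chi)}$ appears shifted by $\tfrac{\Gamma'(1)}{\Gamma(1)}-\log 4\pi$. The correction term $\tfrac{1}{4}\Lambda(0,\chi)\beta(K/F)\,E_2^+(\tau)$ is calibrated precisely so that, once added to $h_{\hat\phi}(\CM(K))$, its constant term agrees with the Eisenstein side, in accordance with the generating-series identity anticipated in \cite{BY}.

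The principal technical obstacle is bookkeeping the holomorphic projection. The Green functions defining $\hat{\mathcal T}_m$ contribute non-holomorphic tails to the generating series, and the central derivative of the non-holomorphic Eisenstein series likewise produces non-holomorphic Fourier coefficients; one must check that the holomorphic projection operator sends both sides to the same holomorphic modular form. This analytic comparison is essentially carried out in \cite{BY}, so with Theorem \ref{maintheo} supplying the finite arithmetic intersection numbers and Theorem \ref{Colmez} pinning down the constant term, assembling these inputs into the full generating-series identity yields Theorem \ref{theo1.4}.
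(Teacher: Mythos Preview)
Your proposal is correct and follows essentially the same approach as the paper: expand $h_{\hat\phi}(\CM(K))$ term by term using Theorem~\ref{maintheo} for the finite intersection numbers and the Green function values from \cite{BY} for the archimedean part, then match against the Fourier expansion of the holomorphic projection coming from \cite{BY}. The paper's version (Theorem~\ref{theo7.1}) is slightly cleaner in two respects: it invokes \cite[Theorem~8.1]{BY} directly, which already gives the explicit Fourier coefficients $-4(b_m+c_m+d_m)$ of the holomorphic projection $f(\tau)$, so no separate bookkeeping of the holomorphic projection is needed; and it observes that the constant term matching is trivial (both sides have constant term zero once the $E_2^+$ correction is added), with Colmez entering only to rewrite $\tfrac{1}{2}h_{\hat\omega}(\CM(K))$ as $\tfrac{1}{4}\Lambda(0,\chi)\beta(K/F)$ in the statement.
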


Let $\mathcal A_2$ be the moduli stack of principally polarized
abelian surfaces \cite{CF}. $\mathcal A_2(\mathbb C) = \Sp_2(\mathbb
Z) \backslash \mathbb H_2$ is the Siegel modular variety of genus
$2$. For each integer $m$, let $G_m$ be the Humbert surface in
$\mathcal A_2(\mathbb C)$ (\cite[Chapter 9]{Ge}, see also Section
\ref{sect8}), which is actually defined over $\mathbb Q$. Let
$\mathcal G_m$ be the flat closure of $G_m$ in  $\mathcal A_2$.  For
a quartic  CM number field $K$, let $\CM_S(K)$ be the moduli stack
of principally polarized CM abelian surfaces by $\O_K$. In Section
8, we will prove the following theorem  using Theorem \ref{maintheo}
and a natural  map from $\mathcal M$ to $\mathcal A_2$.

\begin{theo} \label{siegel} Assume $K$ satisfies the condition in Theorem
\ref{maintheo}, and that $Dm$ is not a square. Then $\CM_S(K)$ and
$\mathcal G_m$ intersect properly, and
\begin{equation}
\CM_S(K).\mathcal G_m = \frac{1}2\sum_{n >0,  \frac{D m-n^2}{4}
\in \mathbb Z_{>0}} b_{\frac{Dm-n^2}4}.
\end{equation}
\end{theo}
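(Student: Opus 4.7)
The plan is to deduce the statement from Theorem~\ref{maintheo} by comparing intersections on $\mathcal M$ and on $\mathcal A_2$ via the natural forgetful map. Since $D\equiv 1\pmod 4$ is a prime, the different $\partial_F = \sqrt D\,\O_F$ is principal, so multiplying by $\sqrt D$ converts a $\partial_F^{-1}$-polarization into an ordinary principal polarization; forgetting the $\O_F$-action then produces a proper morphism $\pi\colon \mathcal M\to\mathcal A_2$ of stacks over $\mathbb Z$. Tracking the restriction $\O_F\subset\O_K$ on the two sides shows that $\CM(K)$ maps onto $\CM_S(K)$ with $\pi_*[\CM(K)] = [\CM_S(K)]$ in the appropriate stack-theoretic sense.

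The crucial ingredient is the integral pullback identity
\begin{equation*}
\pi^*[\mathcal G_m] \;=\; \sum_{\substack{n>0,\ 4\mid Dm-n^2\\ n^2 < Dm}} [\mathcal T_{(Dm-n^2)/4}]
\end{equation*}
of divisors on $\mathcal M$. Over $\mathbb C$ this is the classical Hirzebruch--Zagier/van der Geer decomposition of a Humbert surface pulled back to a Hilbert modular surface: a principally polarized abelian surface carrying simultaneously real multiplication by $\O_F$ of discriminant $D$ and an extra endomorphism of discriminant $m$ has its extra endomorphism lying in an $\O_F$-suborder whose discriminant yields the relation $Dm = n^2 + 4N$, and each positive $n$ produces one component of $T_N$ with $N = (Dm-n^2)/4$. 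The hypothesis that $Dm$ is not a square eliminates the degenerate case $n^2 = Dm$ (equivalently $N = 0$), ensures every summand is an honest Hirzebruch--Zagier divisor, and forces $\mathcal G_m$ and $\CM_S(K)$ to intersect properly on $\mathcal A_2$. Promoting the decomposition from the generic fiber to an identity of divisors on the integral stack uses the flat-closure definition of $\mathcal T_N$ from \cite{BBK} together with the fact that $\CM(K)$ is supported on the open locus where both moduli problems are \'etale.

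Granting this decomposition, the projection formula for the proper morphism $\pi$ yields
\begin{equation*}
\CM_S(K)\cdot\mathcal G_m \;=\; \pi_*[\CM(K)]\cdot[\mathcal G_m] \;=\; [\CM(K)]\cdot\pi^*[\mathcal G_m] \;=\; \sum_{n>0}\bigl(\mathcal T_{(Dm-n^2)/4}\cdot\CM(K)\bigr),
\end{equation*}
and Theorem~\ref{maintheo} rewrites each summand on the right as $\tfrac12 b_{(Dm-n^2)/4}$, giving the stated formula after summing.

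The main obstacle is the integral pullback identity itself: agreement on the generic fiber is classical, but one must control both sides at every prime, including primes of bad reduction for $\mathcal M$. The hypothesis that $Dm$ is not a square is exactly what guarantees that no summand collapses to the Hodge-bundle component of the generating series of \cite{BBK}, which would otherwise contribute an extra $\tfrac12\hat\omega$-term and push the calculation outside the regime in which Theorem~\ref{maintheo} applies directly. A secondary bookkeeping item is matching the stacky factor of $2$ that arises because the two CM types of $K$ collapse to the same principally polarized abelian surface under $\pi$; this is what produces the overall factor of $\tfrac12$ on the right-hand side of the theorem.
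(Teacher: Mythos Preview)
Your approach is essentially identical to the paper's: define the forgetful map $\phi_D\colon\mathcal M\to\mathcal A_2$, invoke the pullback identity $\phi_D^*\mathcal G_m=\sum_n\mathcal T_{(Dm-n^2)/4}$ (which the paper cites from \cite{Fr} and \cite{Ge} on the generic fiber and then passes to flat closures), apply the projection formula, and finish with Theorem~\ref{maintheo}. One small correction to your final paragraph: the factor $\tfrac12$ is not a stacky pushforward artifact---the paper notes that $\CM(K)\to\CM_S(K)$ is an \emph{isomorphism} of stacks, so $(\phi_D)_*\CM(K)=\CM_S(K)$ on the nose, and the $\tfrac12$ is already present in the statement of Theorem~\ref{maintheo} itself.
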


Since  $\mathcal G_1$ is the moduli space of
principally polarized abelian  surfaces which are not Jacobians of
genus two curves,  the above theorem has the following
consequence.

\begin{cor} \label{genus2} Let $K$ be a  quartic CM number field as in Theorem
 \ref{maintheo}. Let $C$ be a genus two curve over a number field
 $L$ such that its Jacobian $J(C)$ has CM by $\O_K$ and has good
 reduction everywhere.  Let $l$ be a prime. If   $C$ has bad reduction at a
 prime $\mathfrak l|l$ of $L$, then
 \begin{equation} \label{eq8.4}
 \sum_{0 < n <\sqrt D, odd} b_{\frac{D-n^2}4}(l) \ne 0
\end{equation}
 In particular, $l\le \frac{D \tilde D}{64}$. Conversely, if
 $(\ref{eq8.4})$ holds for a prime $l$, then there is a genus two curve $C$ over
 some
 number field $L$ such that

 (1) \quad  $J(C)$ has CM by $\O_K$ and has good reduction
 everywhere, and

 (2) \quad $C$ has bad reduction at a prime $\mathfrak l$ above $l$.
 \end{cor}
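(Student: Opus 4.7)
The plan is to specialize Theorem \ref{siegel} to $m=1$ and combine it with two classical dichotomies. First, a principally polarized abelian surface over an algebraically closed field is either the Jacobian $(J(C),\Theta)$ of a smooth genus two curve $C$ or a product $E_1\times E_2$ of two elliptic curves with the product principal polarization, and $\mathcal{G}_1$ is exactly the locus of the latter. Second, if $J(C)$ has good reduction at a prime $\mathfrak{l}$ while $C$ does not, then the stable reduction of $C$ at $\mathfrak{l}$ is a union of two elliptic curves meeting at a point, so $J(C)\bmod\mathfrak{l}$ becomes a product of two elliptic curves, i.e., lies in $\mathcal{G}_1(\overline{k(\mathfrak{l})})$; conversely, by Torelli, good reduction of $C$ at $\mathfrak{l}$ forces $J(C)\bmod\mathfrak{l}$ to be a Jacobian and hence not in $\mathcal{G}_1$. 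Together these two facts mean that the hypothesis ``$C$ has bad reduction at $\mathfrak{l}$'' is equivalent to $[J(C)]$ contributing to the local intersection $(\CM_S(K).\mathcal{G}_1)_{\mathfrak{l}}$.

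Taking $m=1$ in Theorem \ref{siegel} (which applies since $D$ is prime, hence $Dm=D$ is not a square) and using $D\equiv 1\pmod 4$, the condition $\tfrac{D-n^2}{4}\in\mathbb{Z}_{>0}$ with $n>0$ forces $n$ odd and $0<n<\sqrt{D}$, so
\begin{equation*}
\CM_S(K).\mathcal{G}_1=\frac{1}{2}\sum_{\substack{0<n<\sqrt{D}\\ n\text{ odd}}} b_{\frac{D-n^2}{4}}.
\end{equation*}
Passing to the local contribution at $l$ gives $(\CM_S(K).\mathcal{G}_1)_l=\frac12\sum_n b_{(D-n^2)/4}(l)$. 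For the forward direction, the dichotomy above forces the reduction of $J(C)$ at $\mathfrak{l}\mid l$ to lie in $\mathcal{G}_1$, so the left side is strictly positive and the sum on the right is nonzero at $l$. For the bound on $l$, use the vanishing statement recorded just after Conjecture \ref{conj}: nonvanishing of $b_m(l)$ requires $4Dl \le m^2\tilde{D}$; since $m=(D-n^2)/4$ is maximized at $n=1$ by $(D-1)/4$, this gives $l\le (D-1)^2\tilde{D}/(64D)<D\tilde{D}/64$.

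For the converse, I would use that a non-biquadratic quartic CM field $K$ has primitive CM types, so any abelian surface $A$ with CM by $\O_K$ is absolutely simple. By the dichotomy $A\cong J(C_A)$ for a smooth genus two curve $C_A$ over $\bar{\mathbb{Q}}$, and by the theory of CM abelian varieties (Serre--Tate) $A$ has potentially good reduction everywhere, so after enlarging the field of definition $L$ we may assume $J(C_A)$ has good reduction everywhere. If $\sum_n b_{(D-n^2)/4}(l)\neq 0$, then $(\CM_S(K).\mathcal{G}_1)_l>0$, so at least one CM point $A=J(C_A)$ must reduce into $\mathcal{G}_1$ at some prime $\mathfrak{l}\mid l$; by the dichotomy the associated $C_A$ then has bad reduction at $\mathfrak{l}$, furnishing the required curve.

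The substantive inputs are Theorem \ref{siegel} and the dichotomy between products and Jacobians, both of which I invoke as black boxes. The step I expect to require the most care is verifying the equivalence ``bad reduction of $C$ $\iff$ product reduction of $J(C)$'' in the stack-theoretic setting over $\O_L$, together with the assertion that the integral intersection number $(\CM_S(K).\mathcal{G}_1)_l$ genuinely records (with the correct multiplicities and factor $\log l$) precisely the CM points whose reductions above $l$ land in $\mathcal{G}_1$; these rest on the theory of stable reduction of genus two curves and the Torelli morphism, and must be applied with some care because $\mathcal{G}_1$ is only the flat closure of the generic reducible locus rather than being characterized moduli-theoretically in mixed characteristic.
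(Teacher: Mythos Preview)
Your proposal is correct and follows essentially the same route as the paper: specialize Theorem~\ref{siegel} to $m=1$, use that $\mathcal G_1$ is precisely the locus of principally polarized abelian surfaces that split as a product of elliptic curves, and invoke the standard dichotomy that (under good reduction of the Jacobian) bad reduction of a genus two curve at $\mathfrak l$ is equivalent to $J(C)\bmod\mathfrak l$ landing in $\mathcal G_1$. Your derivation of the bound $l\le D\tilde D/64$ from the remark following Conjecture~\ref{conj} and your converse argument via simplicity of CM abelian surfaces for non-biquadratic $K$ match the paper's reasoning as well.
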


 Finally we recall  that  Igusa
defines 10 invariants which characterize genus two curves over
$\mathbb Z$ in  \cite{Ig2}. They are Siegel modular forms of genus
$2$ (level $1$) \cite{Ig}. One needs three of them to determine
genus two curves over $\bar{\mathbb Q}$ and over $\bar{\mathbb F}_p$
for $p
>5$, which are now  called the Igusa invariants $j_1$, $j_2$, and
$j_ 3$. Recently,  Cohn and Lauter (\cite{CL}), and Weng \cite{Wen}
among others started to use genus two curves over finite fields for
cryptosystems. For this purpose, they need to compute the CM values
of the Igusa invariants associated to a quartic non-biquadratic CM
field. Similar to the classical $j$-invariant, these CM values are
algebraic numbers. However, they are in general  not algebraic
integers. It is very desirable to at least bound the denominators of
these numbers for this purpose and also in theory. Lauter gives an
inspiring conjecture about the denominator  in \cite{La} based on
her calculation and Gross and Zagier's work on singular moduli
\cite{GZ}. In Section \ref{sect8}, we will  prove the following refinement of
her conjecture subject to the condition in Theorem \ref{maintheo}.

\begin{theo} \label{Lauter} (Lauter's conjecture). Let $j_i'$, $i=1, 2, 3$ be the
slightly renormalized Igusa invariants in Section \ref{sect8}, and
let $\tau$ be a CM point in $X_2$ such that the associated abelian
surface $A_\tau$ has endomorphism ring $\O_K$, and let $H_i(x)$ be
the minimal polynomial of $j_i'(\tau)$ over $\mathbb Q$. Assume
$K$ satisfies the condition in Theorem \ref{maintheo}. Let $A_i$ be positive integers given by
$$
A_i =\begin{cases}
  e^{3 W_K \sum_{0 < n <
\sqrt D, odd} b_{\frac{D-n^2}4}} &\ff  i=1,
\\
 e^{2 W_K \sum_{0 < n <
\sqrt D, odd} b_{\frac{D-n^2}4}} &\ff  i=2, 3. \end{cases} $$ Here
$W_K$ is the number of roots of unity in $K$. Then $A_i H_i(x)$ is
defined over $\mathbb Z$. In particular, $A_i \norm( j_i'(\tau))$
is a rational integer.
\end{theo}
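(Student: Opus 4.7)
The plan is to deduce Lauter's conjecture directly from the arithmetic intersection formula of Theorem \ref{siegel}. The renormalized Igusa invariants $j_i'$ are Siegel modular functions on $\mathcal{A}_2$ whose pole divisor is supported on the Humbert surface $G_1$ --- the locus of principally polarized abelian surfaces which are products of elliptic curves, equivalently the divisor of the Siegel cusp form $\chi_{10}$. Under the normalization of Section \ref{sect8}, $j_1'$ has a pole of order $3$ along $\mathcal{G}_1$, while $j_2'$ and $j_3'$ have poles of order $2$; this is the origin of the exponents $3$ and $2$ appearing in the definition of $A_i$.

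Next, I would fix $\tau$ as in the statement and let $\tau_1,\ldots,\tau_h$ run over its Galois conjugates, so that $H_i(x) = \prod_{j=1}^{h}(x - j_i'(\tau_j))$. For a prime $l$ and a place $\mathfrak{l}$ of $\bar{\Q}$ above $l$, the value $j_i'(\tau_j)$ has negative $\mathfrak{l}$-adic valuation precisely when the abelian surface $A_{\tau_j}$ acquires, modulo $\mathfrak{l}$, a splitting as a product of elliptic curves; in that case
\[
-v_{\mathfrak{l}}(j_i'(\tau_j)) \;=\; e_i \cdot (\text{local intersection multiplicity of } \CM_S(K) \text{ with } \mathcal{G}_1 \text{ at the reduction}),
\]
where $e_i \in \{3,2,2\}$ is the pole order of $j_i'$ along $\mathcal{G}_1$. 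Summing over all Galois conjugates $\tau_j$ and all places $\mathfrak{l}\mid l$, and passing from the moduli stack $\CM_S(K)$ to its coarse moduli scheme (which multiplies intersection degrees by $W_K$, the order of the generic automorphism group of a CM point), one obtains
\[
v_l(\text{denominator of }H_i(x)) \;\le\; 2\,e_i\,W_K \cdot (\CM_S(K).\mathcal{G}_1)_l,
\]
the extra factor $2$ coming from the pairing of complex-conjugate embeddings of $\bar{\Q}$ into $\mathbb{C}$. Theorem \ref{siegel} applied with $m=1$ evaluates the right-hand side in terms of exactly the quantities $b_{(D-n^2)/4}(l)\log l$ appearing in the definition of $A_i$, giving $v_l(A_i) \ge v_l(\text{denominator of }H_i(x))$ for every $l$, and hence $A_i H_i(x) \in \mathbb{Z}[x]$.

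The main obstacle is the careful bookkeeping in the two displayed inequalities: tracking the $W_K$ factor produced by comparing $\CM_S(K)$ with its coarse moduli, the factor of $2$ from conjugate complex embeddings, and the precise pole orders $e_i$ of the $j_i'$ along $\mathcal{G}_1$ under the normalization of Section \ref{sect8}. Once these combinatorial constants are pinned down --- via an explicit local description of $\mathcal{G}_1$ as the zero locus of $\chi_{10}$ together with a comparison of arithmetic degrees on the Siegel threefold --- Theorem \ref{siegel} yields Lauter's conjecture with no further input from the CM field $K$ beyond the hypothesis of Theorem \ref{maintheo}.
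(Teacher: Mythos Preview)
Your strategy is the paper's own: apply Theorem \ref{siegel} with $m=1$ and convert $\CM_S(K).\mathcal{G}_1$ into a denominator bound via the pole of $j_i'$ along $\mathcal{G}_1$. The displayed inequality you reach is even numerically correct, but the justification of its constants is not. In Section \ref{sect8} one has $j_i'=f_i/\Psi_{1,S}^{\,n_i}$ with $(n_1,n_2,n_3)=(6,4,4)$ and $\div\Psi_{1,S}=2\mathcal{G}_1$, so the pole orders of $j_i'$ along $\mathcal{G}_1$ are $2n_i=12,8,8$, not $3,2,2$; correspondingly the conversion factor between the stacky intersection and $\log\lvert\norm j_i'(\tau)\rvert$ is $\tfrac{W_K}{2}$, not $2W_K$---two errors of a factor $4$ that happen to cancel in your formula. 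The paper avoids this hand-tracking entirely by invoking the height identity $h_{\widehat{\div}\,j_i'}(\CM_S(K))=0$ from Section \ref{sect7}, which unwinds to
\[
\tfrac{2}{W_K}\log\bigl\lvert j_i'(\Cm_S(K))\bigr\rvert \;=\; \CM_S(K).\div f_i \;-\; 2n_i\,\CM_S(K).\mathcal{G}_1,
\]
and then inserts Theorem \ref{siegel}.

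Two substantive steps are also missing from your outline. First, you ignore the numerator $f_i$: the paper uses that $f_i$ is an \emph{integral} holomorphic Siegel form, so either $\div f_i$ meets $\CM_S(K)$ properly and contributes a nonnegative term, or $f_i$ vanishes at some CM point and $j_i'(\Cm_S(K))=0$, in which case there is nothing to prove. Without this dichotomy your valuation inequality does not follow from the height identity. Second, you assert a bound on the denominator of $H_i(x)$ directly, but your argument only controls the valuations of the individual conjugates $j_i'(\tau_j)$; the paper supplies the missing step by first deducing $N_i\mid A_i$ for the reduced denominator $N_i$ of $\norm j_i'(\tau)$, and then, writing $j_i'(\tau_j)\O_L=\mathfrak a_j/\mathfrak b_j$ in a common field $L$ with $\prod_j\mathfrak b_j=N_i$, checking that each elementary symmetric function of the $j_i'(\tau_j)$ already has denominator dividing $N_i$.
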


Now we describe briefly how to prove Theorem \ref{maintheo} and its
consequences. The major effort is to prove the following weaker
version of the main theorem, which covers Sections
\ref{sect2}-\ref{sect6}.

\begin{theo} \label{theo1.5} Assume
(\ref{eqOK}) and that $\tilde D =\Delta \Delta' \equiv 1 \mod 4$ is
square free, and that $q$ is an odd prime split in $F$. Then
\begin{equation}
\mathcal T_q.\CM(K) =\frac{1}2 b_q + c_q \log q
\end{equation}
for some rational number $c_q$. Equivalently, one has for any
prime $p \ne q$,
\begin{equation}
(\mathcal T_q.\CM(K))_p = \frac{1}2 b_q(p).
\end{equation}
\end{theo}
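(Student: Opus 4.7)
\medskip

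\noindent\textbf{Proof proposal.} The plan is to work locally at each prime $p\ne q$ and show that the stated local identity
$(\mathcal T_q.\CM(K))_p = \frac{1}{2}b_q(p)$
reduces to an explicit count of pairs consisting of an intersection point together with a deformation-theoretic multiplicity, which matches the definition of $b_q(p)$ term by term. The residual $\log q$ term $c_q \log q$ is exactly the local contribution at $p=q$, which the theorem does not pin down and therefore one need not analyse.

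First I would use that $q$ is split in $F$, say $q\O_F=\mathfrak q\bar{\mathfrak q}$, to give $\mathcal T_q$ a moduli interpretation (this is the content of Section~\ref{sect2}): a point of $\mathcal T_q(S)$ corresponds to a triple $(A,\iota,\lambda)\in\mathcal M(S)$ together with an additional symmetric $\O_F$-linear endomorphism (or equivalently, an $\O_F$-linear isogeny structure) encoding an element of norm $q$ in a suitable $\O_F$-module attached to $A$. With this description in hand, a geometric point of $\mathcal T_q\cap\CM(K)$ over $\bar{\mathbb F}_p$ is an abelian surface $A/\bar{\mathbb F}_p$ with $\O_K$-CM which carries an extra symmetric $\O_F$-linear endomorphism $\alpha$; by a standard trace/norm computation this translates into an element $t\in d_{\tilde K/\tilde F}^{-1}$ of the reflex field of the shape $t=(n+q\sqrt{\tilde D})/(2D)$ with $|n|<q\sqrt{\tilde D}$, exactly the indexing set of~\eqref{eqI1.3}.

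Next, I would decompose
\[
 (\mathcal T_q.\CM(K))_p \;=\; \sum_{x} i_p\bigl(\mathcal T_q,\CM(K);x\bigr),
\]
the sum running over $x\in(\mathcal T_q\cap\CM(K))(\bar{\mathbb F}_p)$, and prove two things. (a) \emph{Point count.} For each admissible $t$ and each prime $\mathfrak p\mid p$ of $\tilde F$, the number of intersection points whose associated invariant is $t$ is equal to $\rho(t d_{\tilde K/\tilde F}\mathfrak p^{-1})$ (times an explicit stack factor of $1/2$ absorbed into the prefactor of $b_q(p)$). This identification is obtained by using the main theorem of complex multiplication to translate $\bar{\mathbb F}_p$-points of $\CM(K)$ with the extra $\alpha$-structure into integral ideals $\mathfrak A$ of $\tilde K$ with $N_{\tilde K/\tilde F}\mathfrak A = t d_{\tilde K/\tilde F}\mathfrak p^{-1}$, which is precisely what $\rho$ counts. (b) \emph{Local multiplicity.} The length $i_p(\mathcal T_q,\CM(K);x)$ equals $\ord_{\mathfrak p}(t)+1$, where $\mathfrak p$ is the prime of $\tilde F$ determined by $x$; in particular the multiplicity vanishes whenever $\mathfrak p$ splits in $\tilde K$, matching the vanishing clause in $B_t(\mathfrak p)$. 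Combining~(a) and~(b) and summing over $\mathfrak p\mid p$ reproduces the definition~\eqref{eqI1.3} of $b_q(p)$.

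The local multiplicity in~(b) is the main obstacle. It will be extracted from a Gross-Keating type deformation analysis: one lifts the pair $(A,\iota)$ and asks to what order in the universal deformation the additional endomorphism $\alpha$ (giving the $\mathcal T_q$-structure) survives while the $\O_K$-action is preserved. This is a computation on the formal group (or the Serre-Tate canonical lift / quasi-canonical lift data) of $A$ at $p$, and it must be handled in several cases according to the splitting behaviour of $p$ in $F$ and in $K$, and to whether $A$ is ordinary, mixed, or supersingular. In each case the obstruction is governed by a single valuation, yielding the clean formula $\ord_{\mathfrak p}(t)+1$. Once this Gross-Keating input is in place the rest is bookkeeping on ideals of $\tilde K$ via the reflex map, and the equality $(\mathcal T_q.\CM(K))_p = \tfrac12 b_q(p)$ drops out.
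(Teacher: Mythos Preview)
Your outline has the right overall shape (decompose by $p\ne q$, identify an invariant $t$, separate into a point count times a local length, invoke Gross--Keating), but it diverges from the paper at the two load-bearing steps, and step~(a) as written is a genuine gap.

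\textbf{Moduli interpretation.} You describe a point of $\mathcal T_q$ as $(A,\iota,\lambda)$ plus an extra symmetric $\O_F$-linear endomorphism. That is the Kudla--Rapoport special-cycle picture, and the paper explicitly warns (Section~\ref{sect2}) that its $\mathcal T_q$ may differ from that one. What the paper actually uses is the map $\Phi:\mathcal Y_0(q)\to\mathcal T_q$ of Bruinier--Burgos--K\"uhn sending a cyclic $q$-isogeny $\phi:E\to E'$ of elliptic curves to an abelian surface $B$. The whole analysis is then phrased in terms of $\phi$ and $\phi^{-1}\Hom(E,E')$, not endomorphisms of $A$. A crucial complication absent from your sketch is that $\End_{\O_F}(B)$ is \emph{not} $\End(E)\otimes\O_F$ (unlike the $q=1$ case); Corollary~\ref{cor2.4} shows it carries a nontrivial local condition $(*_q)$ at the prime $q$, and tracking this condition is most of the work.

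\textbf{The point count (a).} Your claim that the number of intersection points with invariant $t$ equals $\rho(t\,d_{\tilde K/\tilde F}\mathfrak p^{-1})$ ``by the main theorem of complex multiplication'' is exactly the heart of the theorem, and the paper does \emph{not} prove it this way. Instead the paper: (i)~packages the count as $\beta(p,\mu n)=\sum_{[\phi]}R(\phi,T_q(\mu n))/\#\Aut(\phi)$, a sum of representation numbers of the matrix $T_q(\mu n)$ by ternary lattices $L(\phi)$ subject to $(*_q)$ (Theorem~\ref{theo3.5}); (ii)~rewrites $\beta(p,\mu n)$ as an adelic integral over $\mathbb Q_f^*\backslash\mathbb B_f^*/\mathcal K$ for $\mathbb B$ the quaternion algebra ramified at $p,\infty$ (Theorem~\ref{theo4.1}); (iii)~computes each local factor $\beta_l$ explicitly, the case $l=q$ being delicate precisely because of $(*_q)$ (Theorems~\ref{theo5.1},~\ref{theo5.2}); (iv)~separately and independently computes $b_q(p)$ as a product of local factors $b_l(p,\mu n,q)$ (Theorem~\ref{theo6.1}); and (v)~matches $\beta_l=b_l$ prime by prime. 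No CM-theoretic bijection with ideals of $\tilde K$ is ever produced; the equality emerges only after both sides have been reduced to the same product of local densities. Your proposed shortcut would have to explain, in particular, why the local condition $(*_q)$ on the geometric side corresponds to the correct local factor of $\rho$ at primes above $q$, and that is not at all obvious.

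\textbf{The local length (b).} This part is closer. The paper does use Gross--Keating (Proposition~\ref{prop3.4}), but applied to the triple of isogenies $(\phi,\phi\alpha_0,\phi\beta_0):E\to E'$ of elliptic curves, yielding $i_p=\tfrac12(\ord_p\frac{q^2\tilde D-n^2}{4D}+1)$. There is no case split by ordinary/mixed/supersingular: since $K$ is non-biquadratic, any intersection point forces $E$ supersingular and $p$ nonsplit in $F$ (see the discussion around~(\ref{intersection})).
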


  The starting point is a proper map
from the moduli stack $\mathcal Y_0(q)$ of cyclic isogeny $(\phi:
E \rightarrow E')$ of degree $q$ of elliptic curves to $\mathcal
T_q$ constructed by Bruinier, Burgos-Gil, and K\"uhn in
\cite{BBK}, see also Section \ref{sect2}. Let $(B, \iota,
\lambda)$ be the image of $(\phi: E \rightarrow E')$ in $\mathcal
T_q$, we first compute the endomorphism ring of $(B, \iota)$ in
terms of a pair of quasi-endomorphisms $\alpha, \beta \in
\phi^{-1}\Hom(E, E')$ satisfying some local condition at $q$.
This is quite different from the special case $q=1$ considered in \cite{m=1}:
we can not describe the endomorphism ring of $(E, \iota)$ globally.
The upshot  is the following:
associated to a geometric intersection point in $\mathcal
T_q.\CM(K)(\bar{\mathbb F}_p)$ is a triple $(\phi, \phi\alpha,
\phi\beta: E\rightarrow E')$ satisfying certain {\it local}
condition at $q$. Using a beautiful formula of Gross and Keating
\cite{GK} on deformation of isogenies, we are able to compute the
local intersection index and prove the following theorem.

\begin{theo} (Theorem 3.6) For $p \ne q$, one has
$$
(\mathcal T_q.\CM(K))_p = \frac{1}4 \sum_{\substack{ 0 < n < q
\sqrt{\tilde D} \\ \frac{q^2 \tilde D-n^2}{4D} \in p\mathbb Z_{>0}}}
\left( \ord_p \frac{q^2 \tilde D -n^2}{4D} +1\right) \sum_\mu
\sum_{[\phi: E \rightarrow E'] } \frac{R(\phi, T_q(\mu
n))}{\#\hbox{Aut}(\phi)} .
$$
Here $\mu =\pm 1$, $T_q(\mu n)$ is a positive definite $2\times 2$
matrix with entries in $\frac{1}q \mathbb Z$ determined by $n$ and
$\mu$ as in Lemma \ref{lemold1.1}.
 $R(\phi, T_q(\mu n))$ is the number of pairs $(\delta, \beta)
\in (\phi^{-1}\Hom(E, E'))^2$ satisfying certain local condition at
$q$ and $2$ such that
$$T(\delta, \beta):=\frac{1}2 \kzxz {(\delta, \delta)} {(\delta,
\beta)} {(\delta, \beta)} {(\beta, \beta)}   =T_q(\mu n).$$ Finally,
$\Aut(\phi)$ is the set of automorphisms $f \in \Aut(E)$ such that
$\phi \circ f \circ \phi^{-1} \in \Aut(E')$, and the summation is
over the equivalence classes of all isogenies  $[\phi: E \rightarrow
E']$ of degree $q$ of supersingular elliptic curves over
$\bar{\mathbb F}_p$.
\end{theo}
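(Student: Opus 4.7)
The plan is to enumerate the geometric intersection points of $\mathcal T_q$ and $\CM(K)$ over $\bar{\mathbb F}_p$ via the proper parameterization $\mathcal Y_0(q)\to\mathcal T_q$ of Bruinier--Burgos-Gil--K\"uhn, and then compute each local contribution using Gross--Keating's formula for deformations of isogenies. First I would show that, for $p\ne q$, any $\bar{\mathbb F}_p$-point of $\mathcal T_q\cap\CM(K)$ arises from a cyclic $q$-isogeny $\phi:E\to E'$ of \emph{supersingular} elliptic curves, and that the abelian surface $B$ (the quotient of $E\times E'$ by the graph of $\phi$, with its natural $\O_F$-action and $\partial_F^{-1}$-polarization) acquires a compatible $\O_K$-action precisely when one can exhibit a pair $\alpha,\beta\in\phi^{-1}\Hom(E,E')$ satisfying the local conditions at $q$ (and $2$) described just before the theorem, where $\frac{w+\sqrt\Delta}{2}\in\O_K$ is realized through $(\phi\alpha,\phi\beta)$.

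Next I would translate the $\O_K$-action into a condition on the associated rank-two quadratic form. Setting $\delta=\phi\alpha$ and using the degree-pairing $(\cdot,\cdot)$ on $\Hom(E,E')$, the Rosati involution being complex conjugation on $\O_K$ forces
\[
T(\delta,\beta)=\tfrac{1}{2}\kzxz{(\delta,\delta)}{(\delta,\beta)}{(\delta,\beta)}{(\beta,\beta)}=T_q(\mu n)
\]
for some $\mu=\pm1$ and some integer $n$ with $|n|<q\sqrt{\tilde D}$ and $n$ of the appropriate parity, exactly as in Lemma \ref{lemold1.1}. The determinant of this form equals $\frac{q^{2}\tilde D-n^{2}}{4D}$, which in turn forces $p\mid\frac{q^{2}\tilde D-n^{2}}{4D}$ (matching the index set in the sum) and forces $E,E'$ to be supersingular at $p$.

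Then, for each such supersingular datum $(\phi,\delta,\beta)$ I would apply the Gross--Keating formula \cite{GK} to compute the length of the universal deformation ring of the triple over $W(\bar{\mathbb F}_p)$. Because $p\ne q$, the $\Gamma_0(q)$-structure encoded by $\phi$ deforms uniquely, so the length is governed entirely by the rank-two lattice spanned by $(\delta,\beta)$ with Gram matrix $2T(\delta,\beta)$; the Gross--Keating formula then gives length $\ord_p\frac{q^{2}\tilde D-n^{2}}{4D}+1$. Summing the local length over all orbits of pairs $(\delta,\beta)$ with $T(\delta,\beta)=T_q(\mu n)$, weighted by $1/\#\Aut(\phi)$ (to account for the stacky nature of $\mathcal Y_0(q)$), and over the two signs $\mu=\pm1$ together with the symmetry $(\delta,\beta)\mapsto(-\delta,-\beta)$ (producing the overall factor $\frac{1}{4}$), yields the claimed formula for $(\mathcal T_q.\CM(K))_p$.

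The main obstacle, and the essential new feature over the $q=1$ case of \cite{m=1}, is the precise identification of the local conditions at $q$ (and $2$) on $(\alpha,\beta)\in\phi^{-1}\Hom(E,E')^{2}$ that are equivalent to the pair extending to a genuine $\O_K$-action on $B$ rather than merely an $\O_K\otimes\Z_{(q)}$-action. Getting this right simultaneously with the Gross--Keating deformation count on the rank-two lattice, and arranging the bookkeeping so that orbit counts, stack automorphisms, and signs combine cleanly into the single factor $\tfrac{1}{4}\,(\ord_p(\cdot)+1)\sum R(\phi,T_q(\mu n))/\#\Aut(\phi)$, is where the real work lies.
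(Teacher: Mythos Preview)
Your overall strategy matches the paper's: parameterize via $\mathcal Y_0(q)\to\mathcal T_q$, translate the $\O_K$-action on $B$ into a pair $(\alpha,\beta)$ with prescribed Gram matrix and local conditions, then invoke Gross--Keating. But two technical points are wrong and would derail the computation.

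First, the Gross--Keating step is miscast. The deformation problem is not for a rank-two lattice after ``factoring out'' $\phi$; it is for the \emph{triple} of isogenies $(\phi,\phi\alpha_0,\phi\beta_0):E\to E'$, and Gross--Keating is applied to the $3\times 3$ Gram matrix $T(\phi,\phi\alpha_0,\phi\beta_0)=q\,\tilde T_q(\mu n)$. The isogeny $\phi$ does not simply deform uniquely and disappear---it sits in the first row and column of this matrix, and the length one reads off (exactly as in \cite[Theorem~3.1]{m=1}) is $\tfrac{1}{2}\bigl(\ord_p\frac{q^2\tilde D-n^2}{4D}+1\bigr)$, not $\ord_p(\cdot)+1$. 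This $\tfrac12$ from Gross--Keating is the source of the prefactor in the formula (compare Proposition~\ref{prop3.4} and Theorem~\ref{theo3.5}).

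Second, your accounting for the constant via the symmetry $(\delta,\beta)\mapsto(-\delta,-\beta)$ is incorrect. The pairs $(\alpha,\beta)$ and $(-\alpha,-\beta)$ correspond to \emph{distinct} embeddings $\iota,\bar\iota\in I(\phi)$ (differing by complex conjugation on $K$), and each is a separate geometric intersection point; there is no overcounting to divide out. Likewise the sum over $\mu$ is already built into the bijection of Proposition~\ref{prop3.2} and Remark~\ref{remI2.2}: for $D\nmid n$ only one sign $\mu$ occurs. So the numerical factor comes entirely from the Gross--Keating length, not from orbit bookkeeping.

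A minor point: $B$ is not the quotient of $E\times E'$ by the graph of $\phi$. In the BBK construction $B=(E\otimes\mathfrak c)/H$ with $H=(\ker\phi\otimes\mathfrak c)\cap A[\mathfrak q]$, and the identification of $\End_{\O_F}(B)$ with pairs in $\phi^{-1}\Hom(E,E')$ satisfying $(*_q)$ (Corollary~\ref{cor2.4}) is exactly what pins down the local condition at $q$ you flag as the main obstacle.
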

 The next step is to compute the summation
 $$
 \beta(p, \mu n) = \sum_{[\phi: E\rightarrow E']}\frac{R(\phi, T_q(\mu
n))}{\#\hbox{Aut}(\phi)}
$$
which counts the `number' of geometric intersection points between
$\CM(K)$ and $\mathcal T_q$ at $p$. The sum  can be written as
product of local Whittaker integrals and can be viewed as a
generalization of quadratic local density.  In theory, the idea in
\cite{YaDensity1}, \cite{YaDensity2} can be generalized to compute
these local integrals, but it is very complicated. In Section
\ref{sect4}, we take advantage of the relation between supersingular
elliptic curves and maximal orders of the quaternion algebra
$\mathbb B$ which ramifies only at $p$ and $\infty$, and known
structure of quaternions, and transfer the summation into product of
local integral over $\mathbb B_l^*$ instead of usual local density
integral as in \cite{YaDensity1}, \cite{YaDensity2}:

\begin{equation}
\beta(p, \mu n) =\frac{1}2  \int_{\mathbb Q_f^* \backslash \mathbb
B_f^* /\mathcal K} \Psi(g^{-1}.\vec x_0) dg
\end{equation}
if there is $\vec x_0=V(\A_f)^2$ with $T(\vec x_0) = T_q(\mu n)$.
Otherwise, $\beta(p, \mu n) =0$. Here
$$g.\vec x= (g.X_1, g.X_2) =(gX_1g^{-1}, gX_2g^{-1}), \quad \vec
x={}^t(X_1, X_2),
$$
and $\Psi=\prod \Psi_l \in S(V(\A_f))^2$  and $V$ is the quadratic
space of trace zero elements in $\mathbb B$. In Section \ref{sect5}, we
compute these local integrals which is quite technical at $q$ due to
the local condition mentioned above, and obtain an explicit formula
for $\beta(p, \mu n)$ (Theorems \ref{theo5.1} and \ref{theo5.2}). In
Section \ref{sect6}, we compute $b_m(p)$ and proves Theorem \ref{theo1.5}. The
computation also gives a more explicit formula for the intersection
number.

 In Section \ref{sect7}, we use the height pairing  function
 and \cite[Theorem 1.4]{BY} to derive the main theorem from
the weaker version.  we also derive Theorem \ref{Colmez} from
Theorem \ref{maintheo} using  the same idea. Theorem \ref{theo1.4}
is a consequence of the main theorem and \cite[Theorem 8.1]{BY}.  In
Section \ref{sect8}, we briefly review the natural modular
`embedding' from Hilbert modular surfaces to the Siegel modular
$3$-fold,  and prove Theorems \ref{siegel}, \ref{genus2}, and
\ref{Lauter}.

\textbf{Acknowledgments.} To be added.

\section{A brief review of the case $q=1$}
\label{newsect2}

For the convenience of the reader, we briefly review the
computation of the arithmetic intersection between $\CM(K)$ and
$\mathcal T_q$ in the very special case $q=1$ to give a  rough
idea and motivation  to the general case considered in this paper.
We also briefly describe how  Gross and Zagier's beautiful
factorization formula for singular moduli can be derived this way.
We refer to \cite{m=1} for detail, and to Section \ref{sect2} for
notation.

 Let $\mathcal E$ be the moduli stack over $\mathbb Z$ of elliptic
curves. Then  there is a natural isomorphism between $\mathcal E$
and $\mathcal T_1$ given by  $E \mapsto (E\otimes \O_F, \iota,
\lambda)$.  A simple but critical fact is that
$\End_{\O_F}(E\otimes \O_F) \cong \End(E) \otimes \O_F$ is easy to
understand (it is much more complicated even in the split prime
$q$ case considered in Section \ref{sect2}). So a geometric
intersection point in $\mathcal T_1.\CM(K) (\bar{\mathbb F}_p)$ is
determined by a pair $(E, \iota)$ where
$$
\iota: \O_K \hookrightarrow \End(E) \otimes \O_F
$$
such that the main involution on $\O_E=\End(E)$ gives the complex
conjugation on $\O_K$, which implies in particular that $E$ is
supersingular and $p$ is inert in $F$. Since we assume that
$\O_K=\O_F + \O_F \frac{w+\sqrt\Delta}2$, $\iota$ is determined by
$$
\iota(\frac{w+\sqrt\Delta}2) = \alpha_0 + \beta_0 \frac{D +\sqrt
D}2, \quad \iota(\sqrt\Delta) = \delta + \beta \frac{D +\sqrt D}2,
$$
with $\alpha_0, \beta_0 \in \O_E$, and
$$ \delta=2\alpha_0 -w_0,
\beta= 2\beta_0-w_1 \in L_E=\{ x \in \mathbb Z + 2 \O_E:\, \tr x=0\}.
 $$
Here $w=w_0 + w_1 \frac{D+\sqrt D}2$ with $w_i \in \mathbb Z$. Set
$$
T(\delta,\beta) = \frac{1}2 \kzxz {(\delta, \delta)} {(\delta,
\beta)}  {(\delta, \beta)} {(\beta, \beta)} \in \Sym_2(\mathbb Z).
$$
One shows that $T(\delta, \beta)$ is a positive definite integral
matrix of the form  $T_1(\mu n)$ (in the notation of Lemma
\ref{lemold1.1})  for a unique positive integer $n$ with $\det
T_1(\mu n) =\frac{\tilde D-n^2}{D} \in 4p\mathbb Z_{>0}$ and a
unique sign $\mu =\pm 1$.

  Applying a beautiful deformation result of Gross and Keating to
  $1$, $\alpha_0$, and $\beta_0$, we show in \cite[Section 4]{m=1} that
  the local intersection index  of  $\mathcal T_1$ and $\CM(K)$ at
  $ (E, \iota)$ is given by
  $$
  \iota_p (E,  \iota)
   = \frac{1}2(\ord_p \frac{\tilde D-n^2}{4D} +1)
     $$
    which  depends only on $n$. So the  intersection number of
     $\mathcal T_1$ and $\CM(K)$ at $p$ is
   $$
   (\mathcal T_1.\CM(K))_p = \frac{1}2 \sum_{\frac{\tilde D
   -n^2}{4D} \in p\mathbb Z_{>0}} (\ord_p \frac{\tilde D-n^2}{4D} +1)
   \sum_{\mu} \sum_{E s.s.} \frac{R(L_E, T_1(\mu n))}{\# \Aut(E)}
   $$
   where the sum is running over all supersingular elliptic curves
   over $\bar{\mathbb F}_p$ (up to isomorphism), and $R(L_E,
   T_1(\mu n))$ is the representation number of the ternary
   quadratic form $L_E$ representing the matrix $T_1(\mu n)$.

  Finally the last sum is easily seen to be  the product of local
  densities, and can be computed using the formulae in
  \cite{YaDensity1} and \cite{YaDensity2}. However, the case $p=2$
  is extremely complicated, so we used a trick  in \cite{m=1} to
  switch it a local density problem of  $\O_E$ (with the reduced
  norm as its quadratic form) representing a symmetric $3\times 3$
  matrix related to $T_1(\mu n)$, which is computed in \cite{GK}.
  This trick only works in this special case since $\O_E$ is very
  special. In general local density of representing a $3\times 3$
  matrix by a quadratic form of higher dimension is extremely
  complicated. We will have to use a new idea to deal with the
  case $q \ne 1$ in Sections \ref{sect4} and \ref{sect5}. The
  upshot is  then the following formula:
  $$
   (\mathcal T_1.\CM(K))_p = \frac{1}2 \sum_{\frac{\tilde D
   -n^2}{4D} \in p\mathbb Z_{>0}} (\ord_p \frac{\tilde D-n^2}{4D} +1)
   \sum_{\mu} \beta(p, \nu n)
   $$
   where
   $$
   \beta(p, \mu n) =\prod_{l | \frac{\tilde D-n^2}{4 D}}
   \beta_l(p, \mu n)
   $$
   and $\beta_l(p, \mu n)$ is given by right hand side of the formula in Theorem \ref{theo5.1}.
   This finishes the computation at the geometric side. On the
   algebraic side,
    the computation of $b_1(p)$  is similar to that of $b_m(p)$
    in  Section \ref{sect6}(of course simpler) and shows that $b_1(p)$ is the equal to the right hand side of the above
    formula without the factor $\frac{1}2$.  That proves the
    case $q=1$.

     If we further allow $D=1$, i.e., $F=\mathbb Q \oplus Q$, and
     $K= \mathbb Q(\sqrt{d_1}) \oplus \mathbb Q(\sqrt{d_2})$, one
     has  $\mathcal M =\mathcal E  \times \mathcal E$  and $\CM(K)
     =\CM(d_1) \times \CM(d_2)$ where $\CM(d_i)$ is the moduli
     stack of CM elliptic curves of (fundamental) discriminant $d_i
     <0$. Furthermore, $\mathcal T_1$ is just the diagonal
     embedding of $\mathcal E$. From this, it is easy to see
     \begin{align}
\mathcal T_1.\CM(K)
 &=  \CM(K_1).\CM(K_2) \quad \hbox{ in } \quad \mathcal M_1 \notag
 \\
  &= \sum_{\disc[\tau_i]=d_i} \frac{4}{w_1 w_2} \log |j(\tau_1)
  -j(\tau_2)|
  \end{align}
  where $w_i=\# \O_i^*$ and $\tau_i$ are Heegner points in
  $\mathcal M_1(\mathbb C)$ of discriminant $d_i$. Now the
  beautiful factorization of Gross-Zagier on singular moduli
  follows from the arithmetic intersection formula for $\mathcal
  T_1.\CM(K)$. We refer to \cite[Section 3]{m=1} for detail.

\section{Modular Interpretation of $\mathcal T_q$ and Endomorphisms
of Abelian varieties} \label{sect2}

Let $F=\mathbb Q(\sqrt D)$ with $D \equiv 1 \mod 4$ prime. Let
$\mathcal M$ be the Hilbert modular stack defined in the
introduction, and let $\tilde{\mathcal M}$ be a fixed Toroidal
compactification. Let $K= F(\sqrt\Delta)$ be a non-biquadratic
quartic CM number field with real quadratic  subfield $F$, and let
$\CM(K)$ be the CM cycle defined  in the introduction. Notice that
$\CM(K)$ is closed in $\tilde{\CalM}$. $K$ has four different CM
types $\Phi_1$, $\Phi_2$, $\rho \Phi_1=\{ \rho\sigma: \, \sigma
\in \Phi_1\}$, and $\rho\Phi_2$, where $\rho$ is the complex
conjugation in $\mathbb C$. If $x=(A, \iota, \lambda) \in
\CM(K)(\mathbb C)$, then $(A, \iota, \lambda)$ is a CM abelian
surface over $\mathbb C$ of exactly one CM type $\Phi_i$ in
$\mathcal M(\mathbb C)=\SL_2(\O_F) \backslash \mathbb H^2$ as
defined in \cite[Section 3]{BY}. Let $\Cm(K, \Phi_i)$ be set of
(isomorphism classes) of CM abelian surfaces of CM type $(K,
\Phi_i)$ as in \cite{BY}, viewed as a cycle in $\mathcal M(\mathbb
C)$. Then it was proved in \cite{BY}
$$
\Cm(K) =\Cm(K, \Phi_1) + \Cm(K, \Phi_2) =  \Cm(K, \rho \Phi_1) +
\Cm(K, \rho \Phi_2)
$$
is defined over $\mathbb Q$. So we have

\begin{lem} \label{newlem2.1} One has
$$
\CM(K)(\mathbb C) = 2\Cm(K)
$$
in   $\CalM(\mathbb C)$.
\end{lem}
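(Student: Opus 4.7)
The plan is to partition $\CM(K)(\mathbb{C})$ by CM type and match each stratum with one of the cycles $\Cm(K,\Phi_i)$, then invoke the identity recorded just before the lemma.

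First, I would recall from Shimura--Taniyama theory that every point $x=(A,\iota,\lambda)\in\CM(K)(\mathbb{C})$ has a well-defined CM type, determined by the action of $\O_K$ on $\mathrm{Lie}(A)$. Since $K$ is a non-biquadratic quartic CM field, this type is exactly one of the four $\Phi_1,\Phi_2,\rho\Phi_1,\rho\Phi_2$, yielding a disjoint decomposition of $\CM(K)(\mathbb{C})$ into four strata indexed by CM type. Under the forgetful map $(A,\iota,\lambda)\mapsto(A,\iota|_{\O_F},\lambda)$ to $\mathcal{M}(\mathbb{C})$, the stratum of type $\Phi$ maps bijectively onto $\Cm(K,\Phi)$ viewed as a subcycle of $\mathcal{M}(\mathbb{C})$: the image is correct by the very definition of $\Cm(K,\Phi)$, while injectivity on a single stratum follows from the fact that the two possible extensions of a given $\iota|_{\O_F}$ to an $\O_K$-action on $A$ differ by the non-trivial element of $\Gal(K/F)$ and so have CM types $\Phi$ and $\rho\Phi$ respectively; specifying the CM type therefore pins down $\iota$. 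Stacky automorphism groups agree on each stratum (at a CM point both are the units of $\O_K^\times$ respecting $\lambda$), so the matching is one of cycles rather than merely of sets.

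Summing the four strata gives the cycle identity
\begin{equation*}
\CM(K)(\mathbb{C}) = \Cm(K,\Phi_1)+\Cm(K,\Phi_2)+\Cm(K,\rho\Phi_1)+\Cm(K,\rho\Phi_2)
\end{equation*}
in $\mathcal{M}(\mathbb{C})$. Applying the two-sided identity $\Cm(K,\Phi_1)+\Cm(K,\Phi_2)=\Cm(K,\rho\Phi_1)+\Cm(K,\rho\Phi_2)=\Cm(K)$ noted just above the lemma, the right-hand side collapses to $2\Cm(K)$, as claimed. The only genuinely delicate point is ensuring the cycle-theoretic (not merely set-theoretic) compatibility of the four-piece decomposition, which reduces to verifying that stacky automorphism groups match between $\CM(K)$ and $\mathcal{M}$ at each CM point; since every $\O_K^\times$-automorphism already acts through its restriction to $\O_F^\times$ on the $\O_F$-level data, this is routine.
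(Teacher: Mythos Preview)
Your proposal is correct and follows essentially the same approach as the paper: the paper's proof is the paragraph immediately preceding the lemma (culminating in ``So we have''), which decomposes $\CM(K)(\mathbb{C})$ by CM type into four strata and then invokes the identity $\Cm(K,\Phi_1)+\Cm(K,\Phi_2)=\Cm(K,\rho\Phi_1)+\Cm(K,\rho\Phi_2)=\Cm(K)$ from \cite{BY}. Your version simply spells out in more detail why the stratum of each CM type maps bijectively to $\Cm(K,\Phi)$ and why the stacky automorphism groups match, points the paper leaves implicit.
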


Next for an integer $m>0$, let $T_m$ be the Hirzebruch-Zagier
divisor $T_m$  is given by \cite{HZ}
$$
T_m(\mathbb C)=\SL_2(\O_F)\backslash \{ (z_1, z_2) \in \mathbb
H^2: (z_2,
1)A  \left(\substack{ z_1 \\
1}\right) =0 \hbox{ for  some } A \in L_m \},
$$
where
$$
L_m=\{ A =\kzxz {a} {\lambda} {\lambda'} {b} :\, a, b \in \mathbb
Z, \lambda \in
\partial_F^{-1}, ab -\lambda \lambda' =\frac{m}D\}.
$$
 $T_m$
is empty if $(\frac{D}m)=-1$. Otherwise, it is  a finite union of
irreducible curves and is actually defined over $\mathbb Q$.
Following \cite{BBK}, let $\mathcal T_m$ be the flat closure of
$T_m$ in $\mathcal M$, and let $\tilde{\mathcal T}_m$ be  the
closure of $\mathcal T_m$ in $\tilde{\mathcal M}$. When $m=q$ is a
prime split in $F$, $\mathcal T_m$ has the following modular
interpretation. Notice that our $\mathcal T_m$ might be different
from the arithmetic Hirzebruch-Zagier divisor $\mathcal T_m$ defined
in \cite{KR1} using moduli problem, although they are closely
related. It should be interesting to find out their precise
relation.

Let $q $ be a prime number split in $F$, and let $\mathfrak q$ be
a fixed prime of $F$ over $q$. In this paper, we will fix an
identification  $F \hookrightarrow F_{\mathfrak q} \cong \mathbb
Q_q$, and let $\sqrt D \in \mathbb Q_q$ be the image of $\sqrt
D\in F$ under the identification. Following \cite{BBK}, we write
$\mathfrak q =r \mathfrak c^2$ with some $r \in F^*$ of norm being
a power of $q$ and some fractional ideal $\mathfrak c$ of $F$. For
a cyclic isogeny $\phi: E \rightarrow E'$ of elliptic curves of
degree $q$ over a scheme $S$ over $\mathbb Z[\frac{1}q]$,
Bruinier, Burgos, and K\"uhn constructed a triple $(B, \iota,
\lambda)$ as follows. First let $A=E \otimes \mathfrak c$, and
$B=A/H$ with $H=(\ker \phi\otimes \mathfrak c)\cap A[\mathfrak
q]$.  We have the following commutative diagram:

\begin{align} \label{diag2.1}
\xymatrix{ A=E\otimes \mathfrak c \ar[rr]^-{\pi_\mathfrak q}
\ar[dd]^-{\phi\otimes 1} \ar[dr]^-{\pi} & &A/A[\mathfrak q] \cr
 &
B=A/H\ar[ur]^-{\pi_2} \ar[dl]^{\pi_1} & \cr  A'=E'\otimes\mathfrak c
 \cr
 }
\end{align}

The natural action of $\O_F$ on $A$ induces an  action $\iota: \O_F
\hookrightarrow \End(B)$. It is clear
\begin{equation}
P(A)=\Hom_{\O_F}(A, A^\vee)^{\Sym} =\mathfrak c^{-2} \partial_F^{-1}
\end{equation}
naturally.  They proved that under the natural injection
$$
P(B) \hookrightarrow P(A),\quad g \mapsto \pi^\vee g \pi
$$
the image of $P(B)$ is $\partial_F^{-1}$. This gives the
Deligne-Pappas $\partial^{-1}$-polarization map $$ \lambda:
\partial_F^{-1} \rightarrow P(B)$$ satisfying the Deligne-Papas
condition. Furthermore, they proved \cite[Proposition 5.12]{BBK}
that
\begin{equation} \label{eq2.3}
\xymatrix{\Phi: \, (\phi: E \rightarrow E') \mapsto  (B, \iota,
\lambda) \cr}
\end{equation}
is a proper map from the moduli stack $\mathcal Y_0(q)$ over
$\mathbb Z[\frac{1}q]$ to $\mathcal M$, and $\mathcal T_q = \Phi_*
\mathcal Y_0(q)$. The map $\Phi$ is  generically  an isomorphism.  This
proper map extends to a proper map from $\mathcal X_0(q)$ to
$\tilde{\mathcal M}$, whose direct image is the closure
$\tilde{\mathcal T}_m$ of $\mathcal T_m$ in $\tilde{\mathcal M}$.

Recall \cite{Gi}, \cite[Section 1]{Ho}, \cite[Chapter 2]{KRY2},
 \cite{Vi}, and \cite[Section 2]{m=1} that
 two cycles $\mathcal Z_i$ in a DM-stack $\mathcal X$ of
 codimension $p_i$, $p_1 +p_2 = \dim \mathcal X$, intersect
 properly if $\mathcal Z_1\cap\mathcal Z_1= \mathcal Z_1 \times_{\mathcal X} \mathcal Z_2$ is a
 DM-stack of dimension $0$. In such a case, we define their
 (arithmetic) intersection number as  \begin{equation}
 \mathcal Z_1.\mathcal Z_2=\sum_{p} \sum_{x \in \mathcal Z_1
 \cap \mathcal Z_2(\bar{\mathbb F}_p) }\frac{1}{\#\Aut(x)} \log \#
 \tilde{\O}_{\CalZ_1\cap \CalZ_2, x}
    =\sum_{p} \sum_{x \in \mathcal Z_1
 \cap \mathcal Z_2(\bar{\mathbb F}_p) } \frac{1}{\#\Aut(x)}
 i_p(\CalZ_1, \CalZ_2, x) \log p
 \end{equation}
 where $\tilde{\O}_{\CalZ_1\cap\CalZ_2, x}$ is the strictly local
 henselian ring of $\CalZ_1\cap\CalZ_2$ at $x$,
 $$
 i_p(\CalZ_1, \CalZ_2, x) = \operatorname{Length} \tilde{\O}_{\CalZ_1\cap\CalZ_2, x}
 $$
 is the local intersection index of $\CalZ_1$ and $\CalZ_2$ at $x$. If $\phi: \CalZ \rightarrow \CalM$ is a finite proper and
 flat map from stack $\CalZ$ to $\CalM$, we will identify  $\CalZ$
 with its direct image $\phi_*\CalZ$ as a cycle of $\CalM$, by abuse
 of notation.

  Now come back to our special case. Let $p \ne q$ be a fixed prime.  consider the
  diagram over $\mathbb Z_p$
\begin{align} \label{diag2.2}
\xymatrix{ \CM(K)\times_{\mathcal M} \mathcal Y_0(q)  \ar[r]
\ar[d] &\mathcal Y_0(q) \ar[d] \cr
 \CM(K) \ar[r] &  \mathcal M
 \cr
 }
\end{align}
One sees that a geometric point in $\CM(K) \cap \mathcal T_q$ is
indexed by a pair $x=( \phi: E \rightarrow E', \iota)$ with $\phi
\in \mathcal Y_0(\bar{\mathbb F}_p)$ and $\iota: \O_K
\hookrightarrow \End_{\O_F} (B)$ is an $\O_K$-action on $B$ such
that the Rosati involution associated to $\lambda$ gives the complex
conjugation on $K$.  Since $K$ is a quartic non-biquadratic CM
number field, one sees immediately that such a geometric point does
not exist unless $p$ is nonsplit in $F$ and $E$ is supersingular. In
such a case, write $I(\phi)$ for all $\O_K$ action $\iota$
satisfying the above condition. Then the intersection number of
$\CM(K)$ and $\mathcal T_q$ at $p$ is given by
\begin{equation} \label{intersection}
(\CM(K).\mathcal T_q)_p = \sum_{\phi \in \mathcal
Y_0(q)(\bar{\mathbb F}_p), \iota \in I(\phi) }
\frac{1}{\#\Aut(\phi)} i_p(\CM(K), \mathcal T_q, (\phi, \iota)) log
p. \end{equation}
Let $W$ be the Witt ring of $\bar{\mathbb F}_p$.
Let $\mathbb E$ and $\mathbb E'$ be the universal deformations  of
$E$ and $E'$ to $W[[t]]$ and $W[[t']]$ respectively. Let $I$ be the
minimal ideal of $W[[t, t']]$ such that

(1) \quad $\phi$ can be lifted to an (unique) isogeny $\phi_I:
 E_I  \rightarrow E'_I$, where $E_I =\mathbb E \mod I$ and $E_I' =
 \mathbb E' \mod I$.

(2) \quad Let $(B_I, \iota_I, \lambda_I) \in \mathcal M(W[[t,
t']]/I)$ be associated to $\phi_I$. The embedding $\iota$ can be
lifted to an embedding $\iota_I: \O_K \hookrightarrow
\End_{\O_F}(B_I)$.

By deformation theory, one can show that the local intersection
index is equal to
\begin{equation} \label{neweq2.6}
i_p(\phi, \iota):= i_p(\CM(K), \mathcal T_q,
(\phi, \iota)) = \operatorname{Length} W[[t, t']]/I.
\end{equation}

To compute the local intersection index and to count the geometric
intersection points. Let
$(\phi: E \rightarrow E') \in \mathcal Y_0(q)$ and let $(B, \iota,
\lambda) =\Phi(\phi) \in \mathcal M$. Then
$$
\End_{\O_F} B =\{g \in \End_S B:\, \iota(r) g =g \iota(r), r \in
\O_F\}.
$$
 We first make the following identification
\begin{equation} \label{oldeq2.2}
\pi^*:  \End_{\O_F}^0B=\End_{\O_F}B \otimes \mathbb Q \cong
\End^0(A) =\End^0(E) \otimes_\mathbb Z \O_F,  g \mapsto \pi^{-1}
\circ g \circ \pi = \frac{1}q  \pi^\vee g \pi.
\end{equation}

\begin{lem} Under the identification (\ref{oldeq2.2}), we have
$$
\End(\phi) \otimes \O_F \subset \pi^* \End_{\O_F}(B) \subset
\phi^{-1} \Hom(E, E') \otimes \O_F.
$$
Here
$$
\End(\phi) =\{ f \in \End(E):\,  \phi  f \phi^{-1} \in \End(E') \}.
$$
\end{lem}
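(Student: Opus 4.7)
The plan is to prove the two inclusions directly by following endomorphisms through the factorization $A\xrightarrow{\pi}B\xrightarrow{\pi_1}A'$ of $\phi\otimes 1$ from diagram \eqref{diag2.1}; the only ingredients needed are the identification $\ker\pi=H=(\ker\phi\otimes\mathfrak{c})\cap A[\mathfrak{q}]$ together with the canonical $\End_{\O_F}(A)=\End(E)\otimes\O_F$.

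For the first inclusion, I would take a pure tensor $f\otimes r$ with $f\in\End(\phi)$ and $r\in\O_F$ and check that it preserves $H$. The factor $f\otimes 1$ preserves $\ker\phi\otimes\mathfrak{c}$ --- this is literally the content of the condition $\phi f\phi^{-1}\in\End(E')$ defining $\End(\phi)$ --- and it preserves $A[\mathfrak{q}]$ because it is $\O_F$-linear; the factor $1\otimes r$ preserves both subgroups for the same two reasons. Hence $f\otimes r$ descends to an endomorphism $g$ of $B=A/H$ with $g\pi=\pi(f\otimes r)$, equivalently $\pi^*g=f\otimes r$; and $g$ is automatically $\O_F$-linear because $f\otimes r$ commutes with $1\otimes\O_F$ on $A$ while $\iota$ is by construction the descent of this action via $\iota(r)\pi=\pi(1\otimes r)$. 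Since $\End(\phi)\otimes\O_F$ is $\Z$-spanned by such pure tensors, this gives the first inclusion.

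For the second inclusion, I would take $g\in\End_{\O_F}(B)$, set $\tilde g=\pi^*g\in\End^0(E)\otimes F$, and exploit $\phi\otimes 1=\pi_1\circ\pi$ to compute
$$(\phi\otimes 1)\,\tilde g=(\pi_1\pi)\,\pi^{-1}g\,\pi=\pi_1\,g\,\pi,$$
which is an honest (not merely rational) $\O_F$-linear morphism $A\to A'$, and therefore lies in $\Hom_{\O_F}(A,A')=\Hom(E,E')\otimes\O_F$. The inclusion then follows from the elementary observation that an element of $\End^0(E)\otimes F$ lies in $\phi^{-1}\Hom(E,E')\otimes\O_F$ precisely when its image under $\phi\otimes 1$ lies in $\Hom(E,E')\otimes\O_F$: fix a $\Z$-basis $\{1,\omega\}$ of $\O_F$ and write $\tilde g=\tilde g_1+\tilde g_2\omega$ with $\tilde g_i\in\End^0(E)$, so that the condition becomes $\phi\tilde g_i\in\Hom(E,E')$ for $i=1,2$, which is exactly $\tilde g_i\in\phi^{-1}\Hom(E,E')$.

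The main obstacle is purely bookkeeping rather than conceptual: one must verify that the descent is $\O_F$-equivariant (a one-line diagram chase using $\iota(r)\pi=\pi(1\otimes r)$), and confirm that $\Hom_{\O_F}(A,A')=\Hom(E,E')\otimes\O_F$ integrally even when $\mathfrak{c}$ is non-principal --- but both are standard consequences of the construction $A=E\otimes\mathfrak{c}$, $B=A/H$, and the essential content of the lemma is entirely captured by the two diagram chases above.
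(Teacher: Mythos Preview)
Your proposal is correct and follows essentially the same approach as the paper: both inclusions are proved by the same diagram chases through the factorization $\phi\otimes 1=\pi_1\circ\pi$, checking that $f\otimes 1$ preserves $H$ for the first inclusion and computing $(\phi\otimes 1)\pi^*g=\pi_1 g\pi\in\Hom(E,E')\otimes\O_F$ for the second. The paper's version is terser---it only treats $f\otimes 1$ explicitly (leaving the $\O_F$-factor implicit) and does not spell out the basis decomposition $\tilde g=\tilde g_1+\tilde g_2\omega$---but the underlying argument is identical.
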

\begin{proof} For $f \in \End(\phi)$,  and $x \in H$, let $f'=\phi f
\phi^{-1} \in \End(E')$, one has 
$$
(\phi \otimes 1)((f\otimes 1)(x)) = (f'\otimes 1)(\phi \otimes 1)(x)
=0
$$
and so $(f \otimes 1)( x) \in \ker (\phi \otimes 1) =\ker \phi
\otimes \mathfrak c$. Clearly, $(f \otimes 1)( x) \in A[\mathfrak
q]$. So $(f \otimes 1)( x) \in H$, and thus $f \otimes 1 = \pi^*(b)$
for some $b \in \End_{\O_F}(B)$.

On the other hand, if $b \in \End_{\O_F}(B)$, then
$$
(\phi \otimes 1) \pi^*(b) = \pi_1 b \pi  \in \Hom_{\O_F}(A, A') =
\Hom(E, E') \otimes \O_F.
$$
\end{proof}

Since $\phi$ is an isomorphism away from $q$, one sees from the
lemma
$$
\End_{\O_F}(B) \otimes \mathbb Z_l \cong (\End(E) \otimes Z_l)
\otimes_{\mathbb Z} \O_F
$$
for all $l \ne q$ via $\pi^*$. We now study
\begin{equation}
\O_{B, q}=\End_{\O_F} (B) \otimes \mathbb Z_q = \End_{\O_F \otimes
\mathbb Z_q} T_q(B),
\end{equation}
where $T_q(B)$ is the Tate module of $B$ at $q$. We identify
\begin{equation}
F \hookrightarrow F_q=F_{\mathfrak q} \oplus F_{\mathfrak q'} \cong
\mathbb Q_q \oplus \mathbb Q_q,  \quad \sqrt D \mapsto (\sqrt D,
-\sqrt D)
\end{equation}
as fixed at the beginning of this section. Let $\{ e, f\}$ be a {\it
$\phi$-normal basis} of $T_q(E) \subset V_q(E)=T_q(E)\otimes \mathbb
Q_q$ in the sense
\begin{equation}
T_q(E) = \mathbb Z_q e \oplus \mathbb Z_q f, \quad T_q(E')= \mathbb
  Z_q \phi(e) \oplus \mathbb Z q^{-1} \phi(f).
\end{equation}
To clear up notation, we view both $T_q(E)$ and $T_q(E')$ as
submodule of $V_q(E)=T_q(E) \otimes \mathbb Q_q$ so  that
$\phi(e)=e$ and $\phi(f)=f$.  Let $\mathfrak c_q= \mathfrak c\otimes
\mathbb Z_q =\mathbb Z_q (q^r,0) + \mathbb Z_q(0, q^s)$.
  It is easy to see that
  \begin{align*}
   T_q(A)&= T_q(E) \otimes_{\mathbb Z_q} \mathfrak c_q,
   \\
    T_q(A/A[\mathfrak q])&= T_q(A) \otimes_{\O_q} \mathfrak q_q^{-1} =T_q(E) \otimes_{\mathbb Z_q} \mathfrak c_q\mathfrak
    q_q^{-1},
    \\
    T_q(A')&= T_q(E'\otimes \mathfrak
    c)=T_q(E') \otimes_{\mathbb Z_q} \mathfrak c_q.
    \end{align*}
    and
    $$
    T_q(B) =T_q(A/A[\mathfrak q]) \cap  T_q(A').
    $$
    Now we use coordinates. Identify
    $$
    \O_q = \O_{\mathfrak q} \oplus \O_{\mathfrak q'} =\mathbb Z_q
    \oplus \mathbb Z_q,
$$
Then $\mathfrak c_q$ is generated by $(q^r, q^s)$ as an
$\O_q$-module, and $\mathfrak q_q$ is generated by $(q, 1)$ as an
$\O_q$-module. So
\begin{align*}
T_q(B)&= \left( \mathfrak c_q (q^{-1}, 1) e \oplus \mathfrak c_q
(q^{-1}, 1)f \right) \cap \left(\mathfrak c_q e \oplus \mathfrak
(q^{-1}, q^{-1})f\right)
\\
 &=\mathfrak c_q e \oplus \mathfrak c_q (q^{-1}, 1)f
 \\
  &= (\mathbb Z_q q^r e +\mathbb Z_q q^{r-1}f) \oplus (\mathbb Z_q
   q^s e \oplus \mathbb Z_q q^s f),
\end{align*}
and $(x, y) \in \O_q=\mathbb Z_q \oplus \mathbb Z_q$ acts on
$T_q(B)$ via
$$
(x, y) (a_1 q^r e+b_1 q^{r-1}f, a_2 q^s e+b_2 q^s f) =(xa_1 q^r e+
xb_1 q^{r-1}f, ya_2 q^s e+ yb_2 q^s f).
$$
So $\End_{\O_q}T_q(B)$ consists of $(\alpha, \beta) \in (\End
V_q(E))^2$ satisfying
\begin{equation}
\alpha (\mathbb Z_q q^r e +\mathbb Z_q q^{r-1}f) \subset \mathbb Z_q
q^r e +\mathbb Z_q q^{r-1}f, \quad \beta(\mathbb Z_q
   q^s e \oplus \mathbb Z_q q^s f) \subset \mathbb Z_q
   q^s e \oplus \mathbb Z_q q^s f.
   \end{equation}
   Here $V_q(E)=T_q(E) \otimes \mathbb Q_q =\mathbb Q_q e \oplus
   \mathbb Q_q f$. This is the same as $\alpha \in \End(T_q(E'))$
   and $\beta \in \End(T_q(E))$.
   So we have proved that

\begin{prop} Under the identification $\O_q=\O_{\mathfrak q}\oplus
\O_{\mathfrak q'} =\mathbb Z_q\oplus \mathbb Z_q$, one has
$$
\pi^* \End_{\O_q}(T_q(B)) =\{ (\alpha, \beta) \in (\phi^{-1}
\Hom(T_q(E), T_q(E')))^2:\, \phi \alpha \phi^{-1}  \in \End T_q(E'),
\quad \beta \in \End T_q(E)\}.
$$
Equivalently, with respect to a $\phi$-normal basis $\{e , f\}$,
  the  matrices
 of  $\alpha$  and $\beta$, still denoted by $\alpha$ and $\beta$ respectively, have the properties
  \begin{equation} \alpha \in \kzxz {\mathbb Z_q} { \frac{1}q \mathbb Z_q
  }
{q \mathbb Z_q } {\mathbb Z_q }, \quad \beta \in M_2(\mathbb Z_q),
\end{equation}
i.e.,
$$
\alpha \left(\substack{ e \\ f }\right) = \kzxz {x_1} {\frac{1}q y_1
} {q z_1} {w_1}\left(\substack{ e \\ f }\right), \quad \beta
\left(\substack{ e \\ f }\right)  = \kzxz {x_2} { y_2 } {z_2} {w_2}
\left(\substack{ e \\ f }\right),
$$
with $x_i, y_i, z_i, w_i \in  \mathbb Z_q$.
\end{prop}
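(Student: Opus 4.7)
The plan is that the proposition is essentially direct bookkeeping from the explicit lattice calculation of $T_q(B)$ just carried out: the splitting $\mathcal{O}_q = \mathbb{Z}_q \oplus \mathbb{Z}_q$ decomposes $T_q(B)$ as a direct sum of two $\mathbb{Z}_q$-sublattices of $V_q(E)$, and any $\mathcal{O}_q$-linear endomorphism of $T_q(B)$ therefore decomposes as a pair $(\alpha, \beta) \in (\End V_q(E))^2$ acting on the two summands separately.

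First I would identify the two summands found above,
$$T_q(B) = \bigl(\mathbb{Z}_q q^r e + \mathbb{Z}_q q^{r-1} f\bigr) \oplus \bigl(\mathbb{Z}_q q^s e + \mathbb{Z}_q q^s f\bigr),$$
with homothetic copies of the lattices $T_q(E')$ and $T_q(E)$ inside $V_q(E)$. Using the $\phi$-normal basis convention $\phi(e)=e$, $\phi(f)=f$, one has $T_q(E) = \mathbb{Z}_q e + \mathbb{Z}_q f$ and $T_q(E') = \mathbb{Z}_q e + q^{-1}\mathbb{Z}_q f$, so the first summand equals $q^r T_q(E')$ and the second equals $q^s T_q(E)$. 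Stabilizing each scaled lattice is equivalent to stabilizing the unscaled one, so the $\mathcal{O}_q$-linear endomorphism ring of $T_q(B)$ is naturally $\End T_q(E') \times \End T_q(E)$.

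Next I would translate these two conditions back through the identification $\pi^*$ of the preceding lemma, which embeds $\pi^* \End_{\mathcal{O}_F}(B)\otimes \mathbb{Z}_q$ into $\phi^{-1}\Hom(T_q(E), T_q(E')) \otimes_{\mathbb{Z}_q} \mathcal{O}_q$. Under the isomorphism $\mathcal{O}_q \cong \mathbb{Z}_q \oplus \mathbb{Z}_q$ this identifies each component with an element $\alpha$ or $\beta$ of $\phi^{-1}\Hom(T_q(E), T_q(E'))$, and the two stabilization conditions found in the previous step become exactly $\phi\alpha\phi^{-1} \in \End T_q(E')$ and $\beta \in \End T_q(E)$ as stated in the proposition.

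Finally, converting these conditions into matrix form in the basis $\{e,f\}$ is a short calculation. For $\beta$ the condition $\beta(\mathbb{Z}_q e + \mathbb{Z}_q f) \subset \mathbb{Z}_q e + \mathbb{Z}_q f$ immediately gives $\beta \in M_2(\mathbb{Z}_q)$. For $\alpha$, applying the matrix to the generators $e$ and $q^{-1}f$ of $T_q(E')$ and demanding both images remain in $T_q(E')$ forces the matrix shape $\begin{pmatrix}\mathbb{Z}_q & q^{-1}\mathbb{Z}_q \\ q\mathbb{Z}_q & \mathbb{Z}_q\end{pmatrix}$, which is the form stated after writing the off-diagonal entries as $y_1/q$ and $qz_1$ with $y_1, z_1 \in \mathbb{Z}_q$. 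I do not foresee a serious obstacle; the whole argument is bookkeeping once the explicit formula for $T_q(B)$ is in hand. The one point requiring genuine care is the reconciliation between the intrinsic formulation, which involves the conjugation $\phi\alpha\phi^{-1}$, and the convention $\phi(e)=e$, $\phi(f)=f$ that makes this conjugation invisible inside $V_q(E)$; confusing the two would produce the wrong matrix shape.
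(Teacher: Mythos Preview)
Your proposal is correct and follows essentially the same route as the paper: the paper computes $T_q(B)$ explicitly as the direct sum of the two $\mathbb{Z}_q$-lattices, observes that stabilizing each summand is equivalent to $\alpha \in \End T_q(E')$ and $\beta \in \End T_q(E)$, and states the proposition. Your write-up is in fact slightly more detailed than the paper's, spelling out the matrix verification and the point about the conjugation $\phi\alpha\phi^{-1}$ being invisible under the $\phi$-normal basis convention.
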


\begin{cor} \label{cor2.4} One has
\begin{equation}
\pi^* \End_{\O_F}(B) = \{ \alpha +  \beta \otimes \frac{D+\sqrt D}2:
\,  \alpha, \beta \in \phi^{-1} \Hom(E, E')) \hbox{ satisfies }
(*_q) \hbox{ below } \}.
\end{equation}
Here  the matrices of $\alpha$ and $\beta$ with respect to a
$\phi$-normal basis of $T_q(E)$, still denoted by $\alpha$ and
$\beta$ respectively, have the following property $(*_q)$
\begin{equation} \label{eqQ}
\alpha + \beta  \frac{D +\sqrt D}2 \in \begin{pmatrix} \mathbb Z_q & \frac{1}q \mathbb Z_q \\
q \mathbb Z_q &\mathbb Z_q \end{pmatrix}, \quad \alpha + \beta
\frac{D-\sqrt D}2 \in M_2(\mathbb Z_q). \tag{$*_q$}
\end{equation}
(\ref{eqQ}) is equivalent to the condition
\begin{equation} \label{eqQ1}
\alpha + \beta  \frac{D +\sqrt D}2 \in \End(T_q(E')), \quad \alpha +
\beta \frac{D-\sqrt D}2 \in  \End (T_q(E)).
\end{equation}

  \end{cor}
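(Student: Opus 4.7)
The plan is to deduce the Corollary by translating the two-component description of $\pi^*\End_{\O_q}(T_q(B))$ furnished by the preceding Proposition back into the single-expression form $\alpha + \beta\cdot\frac{D+\sqrt D}{2}$, using the $\mathbb Z$-basis $\{1, \frac{D+\sqrt D}{2}\}$ of $\O_F$. By the Lemma just above, every element of $\pi^*\End_{\O_F}(B)$ already sits inside $\phi^{-1}\Hom(E, E') \otimes \O_F$, so it admits a unique such decomposition with $\alpha, \beta \in \phi^{-1}\Hom(E, E')$, and membership in $\pi^*\End_{\O_F}(B)$ can be tested one prime at a time.

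At any prime $\ell \neq q$, $\phi$ becomes an isomorphism on $\ell$-divisible groups, so $\phi^{-1}\Hom(E, E') \otimes \mathbb Z_\ell = \End(E) \otimes \mathbb Z_\ell$, and as noted just after the Proposition we have $\pi^*\End_{\O_F}(B)\otimes \mathbb Z_\ell = \End(E)\otimes \O_F \otimes \mathbb Z_\ell$; no further constraint is imposed there. At $\ell = q$, the splitting $F\otimes \mathbb Q_q = F_{\mathfrak q}\oplus F_{\mathfrak q'}$ with $\sqrt D \mapsto (\sqrt D, -\sqrt D)$ sends the expression $\alpha + \beta\cdot\frac{D+\sqrt D}{2}$ to the pair $(u, v) = \bigl(\alpha + \beta\tfrac{D+\sqrt D}{2},\ \alpha + \beta\tfrac{D-\sqrt D}{2}\bigr) \in (\End V_q(E))^2$. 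Applying the Proposition, this element lies in $\pi^*\End_{\O_q}(T_q(B))$ if and only if $u \in \End T_q(E')$ and $v \in \End T_q(E)$, which is precisely condition \eqref{eqQ1}. The equivalent matrix form \eqref{eqQ} then follows from the explicit descriptions of $\End T_q(E')$ as $\kzxz{\mathbb Z_q}{\frac{1}{q}\mathbb Z_q}{q\mathbb Z_q}{\mathbb Z_q}$ and of $\End T_q(E) = M_2(\mathbb Z_q)$ in the $\phi$-normal basis, both verified in the course of the proof of the Proposition.

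No serious obstacle is anticipated: the structural input has already been absorbed into the Proposition, and what remains is the linear change of variables $(\alpha, \beta) \leftrightarrow (u, v)$, which is invertible over $\mathbb Q_q$ because the associated $2\times 2$ transition matrix has determinant $-\sqrt D \neq 0$. One only needs to assemble the local matchings into a global statement, which is standard since $\End_{\O_F}(B)$ is recovered as the intersection of its localizations inside $\End^0_{\O_F}(B)$ and the $\mathbb Z$-module $\phi^{-1}\Hom(E, E')$ is likewise the intersection of its $\ell$-adic completions.
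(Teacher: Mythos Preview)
Your proposal is correct and follows exactly the route the paper intends: the Corollary is stated without proof precisely because it is the immediate translation of the preceding Proposition via the change of basis $(\alpha,\beta)\leftrightarrow(u,v)=(\alpha+\beta\tfrac{D+\sqrt D}{2},\,\alpha+\beta\tfrac{D-\sqrt D}{2})$ induced by the splitting $\O_F\otimes\mathbb Z_q\cong\mathbb Z_q\oplus\mathbb Z_q$, together with the earlier observation that $\pi^*\End_{\O_F}(B)\otimes\mathbb Z_\ell=\End(E)\otimes\O_F\otimes\mathbb Z_\ell$ for $\ell\ne q$. Your write-up makes explicit the local-to-global assembly and the invertibility of the transition matrix, neither of which the paper spells out, but the argument is the same.
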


\section{Local Intersection index} \label{sect3}

Let the notation and assumption be as in Section \ref{sect2}.
 The purpose of this section is to compute the local intersection index $i_p(\phi, \iota)$ in $(\ref{neweq2.6})$.
 We need a little preparation. Replacing $\Delta$ by $m \Delta$ in \cite[Lemma 4.1]{m=1}, one has

\begin{lem}   \label{lemold1.1} Let $m \ge 1$ be an integer and  let $0 <  n < m \sqrt{\tilde D}$ be an integer with
  $\frac{m^2 \tilde D -n^2}{D} \in \mathbb Z_{>0}$.

  (1) \quad When $D\nmid n$, there is a unique sign $\mu =\pm 1$ and a
  unique $2\times 2$  positive definite matrix $T_m(\mu n)
  =\kzxz {a} {b} {b} {c} \in \frac{1}m \Sym_{2}(\mathbb Z)$ such that
  \begin{align}
   \det T_m(\mu n) &=ac -b^2 =\frac{m^2 \tilde D -n^2}{Dm^2},
   \label{eqold1.1}
   \\
     \Delta &= \frac{2 \mu n_1 -Dc  -(2b +Dc) \sqrt D}{2},
    \label{eqold1.2}
\\
    -\mu n_1  &= a + Db + \frac{D^2-D}{4} c.  \label{eqold1.3}
    \end{align}
Here $n_1 =n/m$.

 (2) \quad  When $D |n$, for every sign $\mu =\pm 1$ there is a unique $2\times 2$ integral positive definite matrix $T_m(\mu n)
  =\kzxz {a} {b} {b} {c}$  satisfying the
  above conditions.
  \end{lem}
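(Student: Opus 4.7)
The plan is to treat the three conditions $(\ref{eqold1.1})$–$(\ref{eqold1.3})$ as a system of affine-linear equations in the three unknowns $a, b, c$. Writing $\Delta = \alpha + \beta\,\tfrac{D+\sqrt D}{2}$ with $\alpha, \beta \in \mathbb Z$ (possible since $\Delta \in \O_F$), equation $(\ref{eqold1.2})$ splits into its rational and $\sqrt D$-parts, yielding two linear equations in $a, b, c$; together with $(\ref{eqold1.3})$ this is a square linear system that I would solve uniquely for $a, b, c \in \mathbb Q$ in terms of $\mu$, $n_1$, $\alpha$, $\beta$, and $D$. Uniqueness of $T_m(\mu n)$ (for a fixed sign $\mu$) is then automatic, and I would verify by direct substitution that the determinant identity $(\ref{eqold1.1})$ holds as an algebraic consequence of this solution, using $\tilde D = \Delta \Delta' = \alpha^2 + \alpha \beta D + \beta^2 \tfrac{D^2-D}{4}$.

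The heart of the argument is determining when the rational solution actually lies in $\tfrac{1}{m}\Sym_2(\mathbb Z)$. The closed-form expression for $b$ (coming from the $\sqrt D$-part) lies in $\tfrac{1}{m}\mathbb Z$ essentially for free, since $D$ is odd, and the expression for $a$ lies in $\tfrac{1}{m}\mathbb Z$ because $D \equiv 1 \pmod{4}$ forces $\tfrac{D^2-D}{4} \in \mathbb Z$. The only nontrivial integrality constraint therefore falls on $c$: a short check of the explicit formula shows that $c \in \tfrac{1}{m}\mathbb Z$ iff $\mu n \equiv m\alpha \pmod{D}$. Now the hypothesis $\tfrac{m^2 \tilde D - n^2}{D} \in \mathbb Z$, combined with the congruence $\tilde D \equiv \alpha^2 \pmod{D}$, yields $n \equiv \pm m\alpha \pmod{D}$. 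Hence, when $D \nmid n$ — equivalently $D \nmid m\alpha$, since $D$ is prime — exactly one sign $\mu \in \{\pm 1\}$ realizes the required congruence, giving case (1); when $D \mid n$ (equivalently $D \mid m\alpha$), both signs satisfy the congruence independently, giving case (2).

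The final point is positive-definiteness: $\det T_m(\mu n) = \tfrac{m^2 \tilde D - n^2}{Dm^2} > 0$ by the hypothesis $0 < n < m\sqrt{\tilde D}$, and positivity of the diagonal entries $a$ and $c$ follows by a direct estimate from their closed-form expressions together with the same bound on $n$. The lemma is in substance just the rescaling of \cite[Lemma 4.1]{m=1} by $\Delta \mapsto m\Delta$, so the main obstacle is only the routine bookkeeping in verifying $(\ref{eqold1.1})$ and positive-definiteness; no genuinely new idea beyond this linear-algebra setup and the mod-$D$ analysis is required.
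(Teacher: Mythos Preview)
Your approach is correct and is precisely the argument underlying the paper's proof, which consists of the single sentence ``Replacing $\Delta$ by $m\Delta$ in \cite[Lemma~4.1]{m=1}, one has\ldots''; you have simply unpacked that citation. One small point worth making explicit in the positive-definiteness step: the estimate $c>0$ uses that $\Delta$ is totally negative (since $K$ is CM over $F$), so that $|\Delta+\Delta'|\ge 2\sqrt{\tilde D}>2n_1$ by AM--GM, which combined with $Dc=2\mu n_1-(\Delta+\Delta')$ gives $c>0$ for either sign $\mu$.
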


\begin{rem} \label{remI2.2}  Throughout this paper, the sum $\sum_{\mu}$ means either
$\sum_{ \mu =\pm 1}$ when $D|n$ or the unique term $\mu$ satisfying
the condition in Lemma \ref{lemold1.1} when $D \nmid n$.
\end{rem}

Notice that (\ref{eqold1.2}) implies
\begin{equation} \label{eqold1.4}
  2 \mu n_1 - Dc , \quad 2b + Dc \in \mathbb Z.
  \end{equation}

Now let $p \ne q$ be a prime, and let $\phi: E \rightarrow E'$  be a
cyclic isogeny of degree $q$ of supersingular elliptic curves over
$\bar{\mathbb F}_p$,  i.e., $(\phi: E \rightarrow E') \in \mathcal
Y_0(q) (\bar{\mathbb F}_p)$. We consider the set $I(\phi)$ of
$\O_K$-actions
$$
\iota: \O_K \hookrightarrow \End_{\O_F}(B)
$$
such that the Rosati involution associated to $\lambda$ gives the
complex conjugation on $K$ (as in Section \ref{sect2}). Set
\begin{align}
\pi^* \iota (\frac{w+\sqrt\Delta}2) &= \alpha_0 + \beta_0 \frac{D
+\sqrt D}2,\quad  \alpha_0, \beta_0 \in \phi^{-1} \Hom(E, E')
\label{eqold1.5}
\\
 \pi^* \iota(\sqrt\Delta) &= \alpha + \beta \frac{D+\sqrt D}2=x_1 +x_2
 \sqrt D,   \label{eqold1.6}
 \end{align}
with
\begin{equation} \label{eqold1.7}
 \alpha =2 \alpha_0 -w_0, \quad \beta = 2 \beta_0 -w_1,
\end{equation}
and
\begin{equation}
x_1 = \alpha + \frac{D}2 \beta,  \quad  x_2 = \frac{1}2 \beta.
\end{equation}
Let $\O_E=\End(E)$ and $\mathbb B=\O_E \otimes \mathbb Q$,
\begin{equation} \label{eq3.9}
V=\{ x \in \mathbb B:\, \tr x =0 \},  \quad Q(x)  =
-x^2
\end{equation}
and let \begin{equation}  \label{eq3.10} L(\phi)=(\mathbb Z + 2
\phi^{-1} \Hom(E, E')) \cap V.\end{equation}  Then $ \alpha, \beta
\in L(\phi) $.

  Notice that $(V, Q)$ is a quadratic subspace of the quadratic space $(\mathbb B,
  \det)$ where $\det(x)$ is the reduced norm of $x$. For
  $\vec{x}=(x_1, x_2, \cdots, x_n) \in \mathbb B^n$, we write
  \begin{equation}
  T(\vec{x} ) =\frac{1}2 (\vec x, \vec x)= \frac{1}2 ( (x_i,
  x_j)).
  \end{equation}

Let $\mathbb T(\phi)$ be the set of pairs $(\alpha, \beta) \in L(\phi)^2$ which satisfies (\ref{eqQ}) and $T(\alpha,
\beta) =T_q(\mu n)$ for some integer (unique) $0< n < q \sqrt{\tilde
D}$ with $\frac{q^2 \tilde D -n^2}{4D} \in  p \mathbb Z_{>0}$  and
some  sign (unique) $\mu =\pm 1$.

Let $\tilde{\mathbb T}(\phi)$ be the set of pairs $(\alpha_0, \beta_0)
\in (\phi^{-1}\Hom(E, E'))^2$ which satisfies (\ref{eqQ}) and $T(1,
\alpha_0, \beta_0) =\tilde T_q(\mu n)$ for some integer $0< n < q
\sqrt{\tilde D}$  with $\frac{q^2 \tilde D -n^2}{4D} \in  p \mathbb
Z_{>0}$ and some sign $\mu =\pm 1$. Here
\begin{equation}
\tilde T =\begin{pmatrix} 1 &0 &0 \\ \frac{w_0}2 &\frac{1}2 &0 \\
\frac{w_1}2 &0 &\frac{1}2\end{pmatrix} \diag(1, T)
\begin{pmatrix} 1 &\frac{w_1}2 &\frac{w_1}2 \\ 0 &\frac{1}2 &0 \\
0 &0 &\frac{1}2\end{pmatrix} =\begin{pmatrix}
 1&\frac{w_1}2 &\frac{w_1}2 \\
 \frac{w_0}2 & \frac{1}4 (a+w_0^2) &\frac{1}4 (b+ w_0 w_1)
 \\
 \frac{w_1}2  &\frac{1}4 (b+ w_0 w_1) &\frac{1}4 (c +w_1^2)
 \end{pmatrix}
\end{equation}
 and  $w =w_0 + w_1 \frac{D+\sqrt D}2$ is given in
(\ref{eqOK}).

\begin{prop}  \label{prop3.2} The correspondences
$$\iota \in I(\phi) \leftrightarrow
(\alpha, \beta) \in \mathbb T(\phi) \leftrightarrow (\alpha_0,
\beta_0) \in \tilde{\mathbb T}(\phi)
$$
via (\ref{eqold1.5})-(\ref{eqold1.7}) give bijections among
$I(\phi)$, $\mathbb T(\phi)$, and $\tilde{\mathbb  T}(\phi)$.
\end{prop}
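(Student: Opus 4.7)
The plan is to construct the three correspondences explicitly and then verify the bijections using Corollary \ref{cor2.4}, the trace-zero condition coming from the Rosati involution, and the explicit quadratic relation $(\sqrt{\Delta})^{2}=\Delta$.

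\textbf{From $I(\phi)$ to $\mathbb{T}(\phi)$ and $\tilde{\mathbb{T}}(\phi)$.} Given $\iota \in I(\phi)$, the element $\iota(\tfrac{w+\sqrt\Delta}{2}) \in \End_{\O_F}(B)$ can be written uniquely as $\alpha_0 + \beta_0\frac{D+\sqrt D}{2}$ with $\alpha_0,\beta_0 \in \End^{0}(E)$; by Corollary \ref{cor2.4} applied to $(\alpha_0,\beta_0)$, these elements lie in $\phi^{-1}\Hom(E,E')$ and satisfy $(*_q)$. Setting $\alpha=2\alpha_0-w_0$, $\beta=2\beta_0-w_1$, both $\alpha$ and $\beta$ lie in $\mathbb{Z}+2\phi^{-1}\Hom(E,E')$ and a direct substitution shows that $\iota(\sqrt\Delta)=\alpha+\beta\frac{D+\sqrt D}{2}$. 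Since the Rosati involution associated to $\lambda$ acts on $\End_{\O_F}^{0}(B)\cong\End^{0}(E)\otimes_{\mathbb{Z}}\O_F$ as (main involution)$\otimes 1$ and restricts to complex conjugation on $\iota(K)$, the element $\iota(\sqrt\Delta)$ must have trace zero over $F$. Expanding in the basis $\{1,\sqrt D\}$ of $F$, this forces both $\alpha+\tfrac{D}{2}\beta$ and $\tfrac{1}{2}\beta$ to be trace zero in $\mathbb{B}$, so $\alpha,\beta \in L(\phi)$. The conditions $(*_q)$ for the pair $(\alpha_0,\beta_0)$ translate directly into the same conditions for $(\alpha,\beta)$ because $w_0,w_1\in\mathbb{Z}_q$ are units/integers that preserve the matrix congruences.

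\textbf{Matching $T(\alpha,\beta)$ with $T_q(\mu n)$.} The key computation is to square $\iota(\sqrt\Delta)$ and use that for trace-zero elements $x \in V$ one has $x^{2}=-Q(x)$ and $xy+yx=-(x,y)$. Writing $\Delta=\delta_0+\delta_1\sqrt D$ (the explicit coefficients $\delta_0=\frac{2\mu n_1-Dc}{2}$, $\delta_1=-\frac{2b+Dc}{2}$ come from the shape of $\Delta\in\O_F$) and expanding
\[
\iota(\sqrt\Delta)^{2} = \alpha^{2} + \tfrac{D+\sqrt D}{2}(\alpha\beta+\beta\alpha) + \beta^{2}\Bigl(\tfrac{D+\sqrt D}{2}\Bigr)^{2},
\]
comparison of the $1$- and $\sqrt D$-components with $\Delta$ produces exactly the three identities \eqref{eqold1.1}--\eqref{eqold1.3} with $a=(\alpha,\alpha)$, $b=(\alpha,\beta)$, $c=(\beta,\beta)$. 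By the uniqueness statement in Lemma \ref{lemold1.1}, the pair $(\alpha,\beta)$ yields a matrix of the form $T_q(\mu n)$ for the correct sign $\mu$ and integer $n$, and the determinant relation forces $\frac{q^{2}\tilde D -n^{2}}{4D}$ to be a positive integer. Since the intersection point lies above the supersingular prime $p$, the standard Deuring-style argument shows that this integer is divisible by $p$, so $(\alpha,\beta)\in\mathbb{T}(\phi)$. The map $(\alpha,\beta)\mapsto(\alpha_0,\beta_0)$ via \eqref{eqold1.7} is a linear change of variables, and the identity $\tilde T_q(\mu n) = U^{t}\,\mathrm{diag}(1,T_q(\mu n))\,U$ for the precise upper-triangular $U$ appearing in the definition of $\tilde T$ is a direct $3\times 3$ matrix multiplication, giving $(\alpha_0,\beta_0)\in\tilde{\mathbb{T}}(\phi)$.

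\textbf{The inverse direction.} Given $(\alpha,\beta)\in\mathbb{T}(\phi)$, define $\iota(\sqrt\Delta):=\alpha+\beta\frac{D+\sqrt D}{2}\in\End^{0}_{\O_F}(B)$. The identities \eqref{eqold1.1}--\eqref{eqold1.3} satisfied by $T_q(\mu n)$, run in reverse, imply $\iota(\sqrt\Delta)^{2}=\Delta$, so $\iota$ extends $F$-linearly to an embedding $K=F(\sqrt\Delta)\hookrightarrow\End^{0}_{\O_F}(B)$. Setting $\alpha_0=(\alpha+w_0)/2$, $\beta_0=(\beta+w_1)/2$, the local condition $(*_q)$ for $(\alpha,\beta)$ together with $\alpha,\beta\in \mathbb{Z}+2\phi^{-1}\Hom(E,E')$ gives that $(\alpha_0,\beta_0)\in(\phi^{-1}\Hom(E,E'))^{2}$ satisfies $(*_q)$; Corollary \ref{cor2.4} then places $\iota(\tfrac{w+\sqrt\Delta}{2})$ in $\End_{\O_F}(B)$, and hence $\iota$ maps $\O_K=\O_F+\O_F\tfrac{w+\sqrt\Delta}{2}$ into $\End_{\O_F}(B)$. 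Finally, the Rosati involution acts as the main involution on the $\End^{0}(E)$-factor, which negates the trace-zero element $\iota(\sqrt\Delta)$; hence it induces complex conjugation on $\iota(K)$, so $\iota\in I(\phi)$. The inverse $(\alpha_0,\beta_0)\mapsto(\alpha,\beta)$ is again linear. The two constructed maps are inverses of each other by their defining formulas, yielding the asserted bijections.

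\textbf{Main obstacle.} The nontrivial bookkeeping sits in the identity $\iota(\sqrt\Delta)^{2}=\Delta$ exactly matching the three relations defining $T_q(\mu n)$, including pinning down the sign $\mu$ via \eqref{eqold1.2}, and in checking that the local condition $(*_q)$ is preserved under the affine substitutions $(\alpha,\beta)\leftrightarrow(\alpha_0,\beta_0)$ — this is where the assumption \eqref{eqOK} and the explicit form of the basis element $\tfrac{w+\sqrt\Delta}{2}$ play their role. Everything else is linear algebra over $F$ together with the direct application of Corollary \ref{cor2.4}.
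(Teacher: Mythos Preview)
Your forward direction is largely on the right track and matches the paper's approach: compute $\iota(\sqrt\Delta)^2$, read off the relations defining $T_q(\mu n)$, and invoke Lemma~\ref{lemold1.1}. However, two points are handwaved. First, ``the standard Deuring-style argument'' for $p\mid\frac{q^2\tilde D-n^2}{4D}$ needs an actual argument; the paper supplies one by producing the auxiliary element $\gamma=(\alpha,\beta)+2\alpha\beta\in L_p$, observing $(\alpha,\gamma)=(\beta,\gamma)=0$ and $(\gamma,\gamma)=8\det T_q(\mu n)$, and then using that $L_p$ has determinant $4p^2$. Second, you never verify that $\frac{q^2\tilde D-n^2}{4D}$ is an \emph{integer} (the divisibility by $4$); the paper handles this over $\mathbb{Z}_2$ via $\tilde T_q(\mu n)\in\Sym_3(\mathbb{Z}_2)^\vee$, reading off the congruences $a\equiv -w_0^2\pmod 4$, $b\equiv -w_0w_1\pmod 2$, $c\equiv -w_1^2\pmod 4$.

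The more serious gap is in your inverse direction. You assert that from $\alpha,\beta\in L(\phi)=(\mathbb{Z}+2\phi^{-1}\Hom(E,E'))\cap V$ it follows that $\alpha_0=(\alpha+w_0)/2$ and $\beta_0=(\beta+w_1)/2$ lie in $\phi^{-1}\Hom(E,E')$. But membership in $L(\phi)$ only gives $\alpha=-u_0+2\alpha_1$ for \emph{some} integer $u_0$ and $\alpha_1\in\phi^{-1}\Hom(E,E')$; there is no a~priori reason why $u_0\equiv w_0\pmod 2$. The paper closes this gap by an integrality argument: since $(\alpha_1,\beta_1)$ satisfies $(*_q)$ (here $q$ odd is used), Corollary~\ref{cor2.4} shows $\iota(\frac{u+\sqrt\Delta}{2})\in\End_{\O_F}(B)$, hence $\frac{u+\sqrt\Delta}{2}\in\O_K$; combined with $\frac{w+\sqrt\Delta}{2}\in\O_K$ this forces $\frac{u-w}{2}\in\O_F$, i.e.\ $\frac{u_i-w_i}{2}\in\mathbb{Z}$, and then $\alpha_0=\alpha_1+\frac{w_0-u_0}{2}\in\phi^{-1}\Hom(E,E')$. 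This step is exactly where assumption~(\ref{eqOK}) does its work, and it is the missing idea in your proposal.
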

\begin{proof} Given $\iota \in  I(\phi)$, and let $\alpha$ and
$\beta$ be given via (\ref{eqold1.6}). Then $(\alpha, \beta) \in
L(\phi)^2$ and satisfies (\ref{eqQ}).
 Write
$T(\alpha, \beta) =\kzxz {a} {b} {b} {c} $ with $a =\frac{1}2
(\alpha, \alpha) =-\alpha^2$, $b =\frac{1}2(\alpha, \beta)$, and $c
=\frac{1}2(\beta, \beta)= -\beta^2$. First,
\begin{align*}
\Delta &= (\pi^*\iota(\sqrt\Delta))^2 =(\alpha+\frac{D}2 \beta)^2
-(\alpha +\frac{D}2 \beta, \frac{1}2 \beta ) \sqrt D
\\
 &= -a -Db -\frac{D^2+D}4 c - (b+\frac{1}2 Dc) \sqrt D.
 \end{align*}
 We define $n=q n_1>0$ and $\mu =\pm 1$ by
 $$
 -\mu n_1 = a + Db +\frac{D^2 -D}4 c.
 $$
 Then
 $$
 \Delta = \frac{2 \mu n_1-Dc -(2b +Dc)\sqrt D}2
 $$
satisfying (\ref{eqold1.2}) in Lemma \ref{lemold1.1}. Now a simple
calculation using $\tilde D =\Delta \Delta'$ gives
$$
\det T(\alpha, \beta) =ac -b^2 =\frac{q^2\tilde D -n^2}{q^2D} $$
satisfying  (\ref{eqold1.1}). So $T(\alpha, \beta) =T_q(\mu n)$ for
a unique integer $n$ and a unique sign $\mu$  satisfying the
conditions in Lemma \ref{lemold1.1}. To show $p |q^2 \det T_q(\mu n)
=\frac{q^2 \tilde D -n^2}{D}$, we work over $\mathbb Z_p$ to avoid
the denominator $q$ in $\det T_q(\mu n)$. Write $L_p=L(\phi) \otimes
\mathbb Z_p$, and $\O_p=\O_E \otimes \mathbb Z_p$, then
$$
L_p= (\mathbb Z_p + 2 \O_p) \cap (V\otimes \mathbb Q_p)
$$
has determinant $4 p^2$. Let $$ \gamma = (\alpha, \beta) + 2 \alpha
\beta \in L_p.
$$ Then
$$(\alpha, \gamma) =(\beta, \gamma)
=0, \quad (\gamma, \gamma) =2 (\alpha, \alpha) (\beta, \beta)-2
(\alpha, \beta)^2=8 \det T_q(\mu n) . $$ So the determinant of $\{
\alpha, \beta,\gamma \}$ is
$$
\det T(\alpha, \beta, \gamma)=\det \diag( T_q(\mu n),  4 \det
T_q(\mu n))
 = 4 \det T_q(\mu n)^2.
    $$
So  we have thus $p|\det T_q(\mu n)$ in $\mathbb Z_p$, i.e., $p
|\frac{q^2 \tilde D -n^2}{D}$. Similarly, to show $4|q^2 \det
T_q(\mu n)$,  we work over $\mathbb Z_2$. It is easier to look at
$\tilde T_q(\mu n) \in \Sym_3(\mathbb Z_2)^{\vee}$ (since $\alpha_0,
\beta_0 \in \O_E \otimes \mathbb Z_2$). It implies that
\begin{equation} \label{eqI2.9}
a \equiv -w_0^2 \mod 4, \quad b \equiv -w_0 w_1 \mod 2, \quad c
\equiv -w_1^2 \mod 4.
\end{equation}
So $\det T_q(\mu n) = ac -b^2 \equiv  0\mod 4$, and therefore
$(\alpha, \beta)\in \mathbb T(\phi)$. A simple linear algebra
calculation shows that $(\alpha_0, \beta_0) \in \tilde{\mathbb
T}(\phi)$.

Next, we assume that $(\alpha , \beta) \in \mathbb T(\phi)$. Define
$\iota$ and $(\alpha_0, \beta_0)$ by (\ref{eqold1.6}) and
(\ref{eqold1.7}). The above calculation gives
$$
(\alpha +\beta \frac{D+\sqrt D}2)^2 =\Delta,
$$
so $\iota$ gives an embedding from $K$ into $\End_{\O_F}^0B $ such
that $\iota (\O_F[\sqrt\Delta]) \in \End_{\O_F}B$.  To show that
$\iota \in  I(\phi)$, it suffices to show $\alpha_0, \beta_0 \in
 \phi^{-1} \Hom(E, E')$.  Write by definition
$$
\alpha =-u_0 + 2 \alpha_1, \quad \beta=-u_1 + 2 \beta_1, \quad u=u_0
+ u_1 \frac{D+\sqrt D}2
$$
with $u_i \in \mathbb Z$, $\alpha_1, \beta_1\in \phi^{-1}\Hom(E,
E')$ . Then
$$
\pi^*\iota(\frac{u +\sqrt\Delta}2) = \alpha_1 + \beta_1\frac{D+\sqrt
D}2
$$
and $(\alpha_1, \beta_1) \in  (\phi^{-1}(E, E'))^2$ satisfies the
condition (\ref{eqQ}).  So $\iota(\frac{u +\sqrt\Delta}2) \in
\End_{\O_F}B$ and thus   $\frac{u +\sqrt\Delta}2 \in \O_K$. On the
other hand, $\frac{w+\sqrt\Delta}2 \in \O_K$.  So $\frac{u-w}2
 \in  \O_F$, i.e., $\frac{w_i -u_i}2 \in \mathbb Z$, and
 $$
 \alpha_0 =\alpha_1 +\frac{w_0 -u_1}2 \in \phi^{-1}(E, E'), \quad
 \beta_0 =\beta_1 +\frac{w_1 -u_1}2 \in \phi^{-1}(E, E')
 $$
 as claimed. So $(\alpha_0, \beta_0) \in \tilde{\mathbb T}(\phi)$ and
 $\iota \in I(\phi)$. Finally, if $(\alpha_0,
 \beta_0) \in \tilde{\mathbb T}(\phi)$, let $(\alpha, \beta)$ be given by (\ref{eqold1.7}). Then it is easy to check that  $(\alpha, \beta) \in \mathbb
 T(\phi)$.
\end{proof}

Now we are ready to compute local intersection indices.
\begin{prop}\label{prop3.4}  Let $\phi: E\rightarrow E'$ be an isogeny of supersingular elliptic curves over $\bF_p$  of degree $q$ ($p \ne q$).
Let $(\alpha, \beta) \in \mathbb T(\phi)$ be associated to $\iota
\in I(\phi)$, and let $T_q(\mu n) =T(\alpha, \beta)$ be the
associated matrix as in Proposition \ref{prop3.2}. Then
$$
i_p(\phi, \iota) = \frac{1}2 \left( \ord_p \frac{q^2 \tilde D
-n^2}{4D} +1\right)
$$
depends only on $n$.
\end{prop}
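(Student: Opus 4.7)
The plan is to compute $i_p(\phi,\iota) = \operatorname{Length}_W W[[t,t']]/I$ by the Gross--Keating deformation formula applied to a triple of isogenies of supersingular elliptic curves. First, I would translate the lifting condition via Proposition~\ref{prop3.2}: the datum $\iota \in I(\phi)$ is bijectively encoded by a pair $(\alpha_0,\beta_0) \in \tilde{\mathbb T}(\phi)$, and under the identification $\pi^*$ the $\O_K$-action $\iota$ is reconstructed from $\alpha_0,\beta_0$ by $\O_F$-linear combinations, with the Deligne--Pappas polarization and the Rosati compatibility being automatic. Consequently $\iota$ lifts over $W[[t,t']]/J$ if and only if both quasi-endomorphisms $\alpha_0,\beta_0 \in \phi^{-1}\Hom(E,E')$ lift, equivalently, if and only if the three genuine isogenies $\phi,\ \phi\alpha_0,\ \phi\beta_0 : E \to E'$ all extend. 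Hence $I$ is the minimal ideal of $W[[t,t']]$ over which all three of these isogenies simultaneously lift.

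Second, I would apply Gross--Keating~\cite{GK}, which expresses the length of the universal deformation ring that simultaneously supports prescribed isogenies between two supersingular elliptic curves in terms of the $p$-adic Gross--Keating invariants of the corresponding Gram matrix under the degree pairing on $\Hom(E,E')$. For the triple $(\phi,\phi\alpha_0,\phi\beta_0)$, the identity $\hat\phi\phi = q$ gives $(\phi\alpha_i,\phi\alpha_j) = q(\alpha_i,\alpha_j)$, so the relevant Gram matrix is $q\cdot \tilde T_q(\mu n)$ with $\tilde T_q(\mu n) = T(1,\alpha_0,\beta_0)$. Since $p \neq q$, the scalar $q$ is a $p$-adic unit and can be absorbed into the lattice, reducing the length computation to the $p$-adic invariants of $\tilde T_q(\mu n)$ itself.

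To extract a closed form, one uses that the top-left entry of $\tilde T_q(\mu n)$ equals $1$, a $p$-adic unit. For odd $p$, Gauss reduction over $\mathbb Z_p$ brings $\tilde T_q(\mu n)$ into the block form $\operatorname{diag}(1,G)$, where $G$ is a $2 \times 2$ positive-definite half-integral matrix whose determinant has $p$-adic valuation
\[
h_p \;=\; \ord_p \frac{q^2\tilde D - n^2}{4D},
\]
using $p\ne q$ and the determinant of $\tilde T_q(\mu n)$ computed in the proof of Proposition~\ref{prop3.2}. Further diagonalization of $G$ over $\mathbb Z_p$ gives Gross--Keating invariants $(0,0,h_p)$, in which case the Gross--Keating formula collapses to $\tfrac{1}{2}(h_p+1)$, the claimed value; in particular it depends only on $n$.

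The main obstacle is the prime $p=2$, where $\tilde T_q(\mu n)$ does not admit such a clean diagonalization over $\mathbb Z_2$ and the Gross--Keating invariants must be read from a genuine Jordan decomposition. Here one must invoke the mod-$4$ congruences $a \equiv -w_0^2,\ b \equiv -w_0 w_1,\ c \equiv -w_1^2 \pmod 4$ (established in the proof of Proposition~\ref{prop3.2} from the integrality $\alpha_0,\beta_0 \in \O_E \otimes \mathbb Z_2$ enforced by membership in $\tilde{\mathbb T}(\phi)$) to show that the $2$-adic Jordan invariants still collapse the Gross--Keating sum to $\tfrac{1}{2}(h_2+1)$. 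The trick from~\cite{m=1} of transferring the problem to the rank-$4$ reduced-norm form on $\O_E$ is unavailable here because $\alpha_0,\beta_0$ are only quasi-endomorphisms of $E$, so the $2$-adic reduction must be performed directly on the $3\times 3$ Gram matrix.
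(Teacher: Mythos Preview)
Your approach is essentially the same as the paper's: reduce the lifting of $\iota$ to the simultaneous lifting of the three isogenies $\phi,\phi\alpha_0,\phi\beta_0:E\to E'$, note that Gross--Keating expresses the resulting length in terms of $T(\phi,\phi\alpha_0,\phi\beta_0)=q\,\tilde T_q(\mu n)$, and then extract the closed form $\tfrac12(\ord_p\frac{q^2\tilde D-n^2}{4D}+1)$ from the Gross--Keating invariants $(0,0,h_p)$. The paper simply defers this last extraction to \cite[Theorem~3.1]{m=1}, whereas you spell it out.

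One small correction: your remark that the \cite{m=1} trick of passing to the rank-$4$ reduced-norm form on $\O_E$ is ``unavailable here because $\alpha_0,\beta_0$ are only quasi-endomorphisms'' is not quite right. The Gross--Keating computation is purely $p$-local, and since $p\ne q$ you have $\phi^{-1}\Hom(E,E')\otimes\mathbb Z_p=\O_E\otimes\mathbb Z_p$; you even use $\alpha_0,\beta_0\in\O_E\otimes\mathbb Z_2$ yourself when invoking the mod-$4$ congruences. So the same calculation as in \cite{m=1} goes through verbatim at every $p\ne q$, including $p=2$, and there is no genuine obstacle there. Your direct $3\times3$ approach is of course also valid.
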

\begin{proof} This is a local question at $p$. $\iota \in  I(\phi)$
can be lifted to an embedding $\iota_I: \O_K \hookrightarrow
\End_{\O_F} (B_I)$ if and only if $\alpha_0$ and $\beta_0$ can be
lifted to $\alpha_{0, I}, \beta_{0, I} \in \phi_I^{-1} \Hom(E_I,
E'_E)$, which is equivalent to that $\phi$, $\phi\alpha_0$ and
$\phi\beta_0$ can be lifted to isogenies from $E_I$ to $E'_I$. So
$\iota_p(\phi, \iota)=i_p(\phi, \phi\alpha_0, \phi\beta_0)$ is the
local intersection index of $\phi, \phi\alpha_0, \phi\beta_0$
computed by Gross and Keating \cite{GK}. It depends only on
$T(\phi, \phi\alpha_0, \phi\beta_0)=qT_q(\mu n)$. The same
calculation as in \cite[Theorem 3.1]{m=1} (using Gross and Keating
's formula) gives (recall $n_1=n/q, p \ne q$)
$$
i_p(\phi, \iota) =\frac{1}2\left( \ord_p \frac{ \tilde D -n_1^2}{4D}
+1\right) =  \frac{1}2\left( \ord_p \frac{q^2 \tilde D -n^2}{4D}
+1\right)
$$
\end{proof}

So we have by (\ref{intersection}) and Proposition \ref{prop3.4}

\begin{theo} \label{theo3.5}  For $p \ne q$, one has
$$
(\mathcal T_q.\CM(K))_p = \frac{1}2 \sum_{\substack{ 0 < n < q
\sqrt{\tilde D} \\ \frac{q^2 \tilde D-n^2}{4D} \in p\mathbb Z_{>0}}}
\left( \ord_p \frac{q^2 \tilde D -n^2}{4D} +1\right) \sum_\mu
\sum_{\phi } \frac{R(\phi, T_q(\mu n))}{\#\hbox{Aut}(\phi)} .
$$
Here $R(\phi, T_q(\mu n))$ is the number of pairs $(\alpha, \beta)
\in L(\phi)^2$ such that $T(\alpha, \beta) =T_q(\mu n)$ and
$(\alpha, \beta)$ satisfies the condition (\ref{eqQ}), and
$\sum_\phi$ is over all isogenies (up to equivalence) $\phi : E
\rightarrow E'$ of supersingular elliptic curves over $\bF_p$  of
degree $q$ up to equivalence. Two isogenies $\phi_i: E_i \rightarrow
E_i'$ are equivalent if there isomorphisms $f: E_1 \cong E_2$ and
$f': E_1' \cong E_2'$ such $\phi_2 f = f' \phi_1$.
\end{theo}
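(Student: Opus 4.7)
The plan is to assemble the three ingredients already built up in this section: the decomposition of the local intersection number recorded in \eqref{intersection}, the bijective parametrization $I(\phi)\leftrightarrow\mathbb T(\phi)$ supplied by Proposition \ref{prop3.2}, and the deformation-theoretic computation of the local intersection index in Proposition \ref{prop3.4}. No new geometric input is needed; the argument is essentially a careful reindexing of sums.

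First, I would start from \eqref{intersection}, which writes
$$
(\CM(K).\mathcal T_q)_p \;=\; \sum_{\phi}\frac{1}{\#\Aut(\phi)}\sum_{\iota\in I(\phi)} i_p(\phi,\iota)\,\log p,
$$
the outer sum running over equivalence classes of cyclic $q$-isogenies $\phi:E\to E'$ of supersingular elliptic curves over $\bar{\mathbb F}_p$. (As in Conjecture \ref{conj}, I will drop the formal factor $\log p$ absorbed by the normalization of $(\mathcal T_q.\CM(K))_p$.) Next, use Proposition \ref{prop3.2} to replace the sum over $\iota\in I(\phi)$ by the sum over pairs $(\alpha,\beta)\in\mathbb T(\phi)$ attached to $\iota$ via \eqref{eqold1.5}--\eqref{eqold1.7}. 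By the very definition of $\mathbb T(\phi)$ and by Lemma \ref{lemold1.1}, each such pair determines a unique integer $0<n<q\sqrt{\tilde D}$ with $\frac{q^2\tilde D-n^2}{4D}\in p\mathbb Z_{>0}$, together with a sign $\mu\in\{\pm 1\}$ (unique unless $D\mid n$, in which case both signs occur, cf.\ Remark \ref{remI2.2}), such that $T(\alpha,\beta)=T_q(\mu n)$.

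Then, partition $\mathbb T(\phi)$ according to the discrete invariants $(n,\mu)$ and invoke Proposition \ref{prop3.4}. The latter asserts that $i_p(\phi,\iota)$ depends only on $n$ and equals $\tfrac12\bigl(\ord_p\tfrac{q^2\tilde D-n^2}{4D}+1\bigr)$, so this factor pulls out of the innermost sum. What remains is precisely the count of pairs $(\alpha,\beta)\in L(\phi)^2$ satisfying the local condition $(*_q)$ and $T(\alpha,\beta)=T_q(\mu n)$, namely $R(\phi,T_q(\mu n))$. Swapping the order of summation so that $(n,\mu)$ become outermost yields the claimed identity.

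Because the proof is pure bookkeeping built on already-proved results, there is no serious obstacle. The only points that require care are that the parametrization $I(\phi)\leftrightarrow\mathbb T(\phi)$ is a bijection covering each $\iota$ exactly once (this is Proposition \ref{prop3.2}), that the conditions defining $\mathbb T(\phi)$ indeed force the matrix $T(\alpha,\beta)$ to take the shape $T_q(\mu n)$ for a valid pair $(n,\mu)$ (built into the definition of $\mathbb T(\phi)$ and verified in the proof of Proposition \ref{prop3.2}), and that the convention for absorbing $\log p$ in $(\mathcal T_q.\CM(K))_p$ is consistent with the one used in Conjecture \ref{conj}.
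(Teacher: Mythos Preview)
Your proposal is correct and matches the paper's own argument exactly: the paper simply states that Theorem \ref{theo3.5} follows from \eqref{intersection} and Proposition \ref{prop3.4}, with Proposition \ref{prop3.2} implicitly supplying the bijection $I(\phi)\leftrightarrow\mathbb T(\phi)$ you invoke. You have merely spelled out the bookkeeping in more detail than the paper does.
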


\section{Local densities} \label{sect4}

We write $[\phi: E \rightarrow E']$ for the equivalence class of
$\phi$ and
\begin{equation}
\beta(p, \mu n) =  \sum_{[\phi: E \rightarrow E'] } \frac{R(\phi,
T_q(\mu n))}{\#\hbox{Aut}(\phi)} .
\end{equation}
One can show that $\beta(p, \mu n)$ is the $T_q(\mu n)$-th Fourier
coefficient of some Siegel-Eisenstein series of genus two and weight
$3/2$, and is thus product of local Whittaker functions, which are
slight generalization of local densities computed in
\cite{YaDensity1} and \cite{YaDensity2}. In principle, the idea in
\cite{YaDensity1} and \cite{YaDensity2} can be extended to handle
the general case. However, the actual computation is already
complicated in \cite{YaDensity1} and \cite{YaDensity2}. In this
section, we use a different way to write $\beta(p, \mu n)$ directly
as product of local integrals over quaternions. In next section, we
take advantage of known structure of quaternions to compute the
involved local integrals.

  Fix a cyclic isogeny $\phi_0: E_0 \rightarrow E_0'$ of
  supersingular elliptic curves (over $\bF_p$) of degree $q$.  and a
  $\phi_0$-normal basis $\{ e_0, f_0\}$ of the Tate module
  $T_q(E_0)$. Let $\O=\End(E_0)$ and $\mathbb B= \O \otimes \mathbb
  Q$ be the unique quaternion algebra over $\mathbb Q$ ramified exactly at $p$ and $\infty$. Let $(B_0, \iota_0, \lambda_0) \in \mathcal M(\bF_p)$ be the
  abelian surface with real multiplication associated to $\phi_0$.
  Let $V$ and $L(\phi_0)$ be the ternary quadratic space and lattice  defined  in (\ref{eq3.9}) and
  (\ref{eq3.10}) with $\phi$ replaced by $\phi_0$. For $l \ne q$,
  let
  \begin{equation}
  L_l =L(\phi_0)\otimes \mathbb Z_l, \quad \Psi_l =\cha
  (L_l^2).
  \end{equation}
  For $l=q$,  view $\mathbb B_q =\mathbb B \otimes \mathbb Q_q$
  as the endomorphism ring of $V_q(E_0)=T_q(E_0) \otimes \mathbb
  Q_q$ and identify it with $M_2(\mathbb Q_q)$  using the $\phi$-normal basis
  $\{e_0, f_0\}$. Under this identification, $\O_q = M_2(\mathbb
  Z_q)$. Let
 \begin{equation}
L_q' = \{ X=\kzxz {x} {\frac{1}q y}  {z} {-x} \in V_q: \, x, y, z
\in \mathbb Z_q \}
\end{equation}
and
\begin{equation}
\Omega_q= \{ \vec x = {}^t(X_1, X_2) \in (L_q')^2:\,  z_1 + z_2
\frac{D+\sqrt D}2 \equiv 0 \mod q,  \,
  y_1 + y_2
\frac{D-\sqrt D}2 \equiv 0 \mod q \}
\end{equation}
where  $X_i = \kzxz {x_i} {\frac{1}q y_i} {z_i} {-x_i} \in L_q'$.
Let
 \begin{equation}
 \Psi_q =\cha(\Omega_q), \quad \Psi= \otimes_{l <\infty} \Psi_l \in
 S(V(\A_f)^2).
 \end{equation}
Next, let $\mathcal K= \prod_{l < \infty} \mathcal K_l \subset
\mathbb B_f^*$ be the compact subgroup of $\mathbb B_f^*$ defined by
\begin{equation}
\mathcal K_l=\begin{cases} \O_l^*   &\ff l \ne q,
\\
  K_0(q)=\{ \kabcd \in M_2(\mathbb Z_q): c \equiv 0 \mod q \}  &\ff l =q.
  \end{cases}
  \end{equation}
Clearly, $\Psi$ is $\mathcal K$-invariant.  The main purpose of this
section is to prove

\begin{theo}  \label{theo4.1} Let the notation be as above. Then
\begin{equation} \label{eq4.7}
\beta(p, \mu n) =\frac{1}2  \int_{\mathbb Q_f^* \backslash \mathbb
B_f^* /\mathcal K} \Psi(g^{-1}.\vec x_0) dg
\end{equation}
if there is $\vec x_0=V(\A_f)^2$ with $T(\vec x_0) = T_q(\mu n)$.
Otherwise, $\beta(p, \mu n) =0$. Here
$$g.\vec x= (g.X_1, g.X_2) =(gX_1g^{-1}, gX_2g^{-1}), \quad \vec
x={}^t(X_1, X_2),
$$
and $dg$ is the Tamagawa  measure on $\mathbb B_f^*$.
\end{theo}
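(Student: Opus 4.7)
The plan is to recognize $\beta(p,\mu n)$ as a weighted orbital count on the supersingular locus at $p$ and then unfold it adelically on the quaternion algebra $\mathbb{B}$. The first ingredient is the Deuring correspondence: equivalence classes $[\phi : E \to E']$ of cyclic $q$-isogenies of supersingular elliptic curves over $\bar{\mathbb F}_p$ are in bijection with $\mathbb{B}^*(\mathbb Q) \backslash \mathbb{B}_f^* / \mathcal{K}$, the factor $\mathcal{K}_q = K_0(q)$ encoding the cyclic level-$q$ datum (equivalently, a $\phi$-normal basis at $q$). Under this bijection, $\Aut(\phi_g) = \mathbb{B}^*(\mathbb Q) \cap g \mathcal{K} g^{-1}$ (note that $\pm 1$ act as genuine automorphisms of the pair $(\phi : E \to E')$), and at each finite place $\ell$ conjugation by $g_\ell$ carries the local data of $\phi_0$ to that of $\phi_g$. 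At $\ell \ne q$ the condition on $(\alpha,\beta)$ reduces to the characteristic function of $L_\ell^2$, while at $\ell = q$ Corollary \ref{cor2.4} translates $(*_q)$ into exactly the two congruences cutting out $\Omega_q$ in the coordinates coming from a $\phi_0$-normal basis.

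The second ingredient is a Witt-type orbit analysis. Assume there is $\vec x_0 \in V(\mathbb Q)^2$ with $T(\vec x_0) = T_q(\mu n)$; since $\det T_q(\mu n) \ne 0$, the components span a regular $2$-dimensional subspace of the anisotropic ternary space $V$, and Witt's theorem gives a simply transitive action of $\mathbb{B}^*(\mathbb Q)/\mathbb Q^*$ (acting by conjugation through $\operatorname{SO}(V)$) on $\{\vec X \in V(\mathbb Q)^2 : T(\vec X) = T_q(\mu n)\}$. Consequently
\[
R(\phi_g, T_q(\mu n)) = \#\bigl\{\gamma \in \mathbb Q^* \backslash \mathbb{B}^*(\mathbb Q) : \Psi\bigl((g^{-1}\gamma).\vec x_0\bigr) = 1\bigr\}.
\]
If no such $\vec x_0$ exists, the Hasse principle gives a local place $\ell$ at which $\Omega_\ell$ contains no element of Gram matrix $T_q(\mu n)$, forcing every $R(\phi_g,T_q(\mu n)) = 0$; then $\beta(p,\mu n) = 0$ and the right-hand side of \eqref{eq4.7} also vanishes.

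Putting it together, substitute the orbit sum into the definition of $\beta(p,\mu n)$ and reorder via $g \mapsto \gamma^{-1} g$. The double sum over $g \in \mathbb{B}^*(\mathbb Q) \backslash \mathbb{B}_f^* / \mathcal{K}$ and $\gamma \in \mathbb Q^* \backslash \mathbb{B}^*(\mathbb Q)$ collapses to a single sum over $\mathbb Q^* \backslash \mathbb{B}_f^* / \mathcal{K}$ with multiplicity $\#\Gamma_g / 2$, where $\Gamma_g = \mathbb{B}^*(\mathbb Q) \cap g\mathcal{K}g^{-1}$ and the factor $\tfrac12$ comes from $\Gamma_g \cap \mathbb Q^* = \{\pm 1\}$. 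Dividing by $\#\Gamma_g = \#\Aut(\phi_g)$ produces the prefactor $\tfrac12$. Because $\hat{\mathbb Z}^* \subset \mathcal{K}$, one checks $\mathbb Q^* \mathcal{K} = \mathbb Q_f^* \mathcal{K}$ and hence $\mathbb Q^* \backslash \mathbb{B}_f^* / \mathcal{K} = \mathbb Q_f^* \backslash \mathbb{B}_f^* / \mathcal{K}$; the resulting sum, interpreted with Tamagawa measure normalized by $\vol(\mathcal{K}) = 1$, is precisely the integral in \eqref{eq4.7}.

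The main obstacle is the bookkeeping at the prime $q$. One must verify that (i) the choice of $\phi_0$-normal basis gives a well-defined bijection between $[\phi]$ and $K_0(q)$-cosets of $g_q$, (ii) the condition $(*_q)$ of Corollary \ref{cor2.4} corresponds exactly to $g_q^{-1}.\vec X \in \Omega_q$, with $\Omega_q$ stable under $K_0(q)$-conjugation so that $\Psi_q$ descends to the double coset space, and (iii) the various $\pm 1$'s and factors of $2$ reassemble to give the prefactor $\tfrac12$ and nothing else. Everything at places $\ell \ne q$ is essentially formal once the Deuring dictionary is set up.
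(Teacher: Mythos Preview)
Your proposal is correct and follows essentially the same route as the paper's proof: both use the Deuring-type correspondence between $\mathbb{B}^*\backslash \mathbb{B}_f^*/\mathcal{K}$ and equivalence classes of degree-$q$ isogenies (the paper's Proposition~\ref{prop4.2}), identify $R(\phi_g,T_q(\mu n))$ with a sum over the $\mathbb{B}^*(\mathbb{Q})$-orbit of a fixed $\vec x_0$ via Witt's theorem, and unfold to an integral over $\mathbb{Q}_f^*\backslash \mathbb{B}_f^*/\mathcal{K}$. The only cosmetic difference is that you extract the factor $\tfrac12$ combinatorially from $\Gamma_g\cap\mathbb{Q}^*=\{\pm1\}$ together with $\mathbb{Q}^*\mathcal{K}=\mathbb{Q}_f^*\mathcal{K}$, whereas the paper obtains it from $\int_{\mathbb{Q}^*\backslash\mathbb{Q}_f^*}d^*x=\tfrac12$; these are the same computation in different dress.
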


We first recall a close relation between $\mathbb B_f^*$ and cyclic
isogenies $\phi: E \rightarrow E'$ of degree $q$.  Let $T_l(E)$ be
the $l$-Tate module of $E$ for $l \ne p$ and let $T_p(E)$ be the
covariant Die\'udonne module of $E$ over the Witt ring $W=W(\bF_p)$,
and let $\hat T(E) = \otimes T_l(E)$. A homomorphism from $T_p(E)$
to $T_p(E')$ means a $W$-linear map on the Dieudonn\'e modules which
commute with the Frobenius map. Then for $b \in \mathbb B_f^*$,
there is an quasi-isogeny $f: E\rightarrow E_0$ such that $\hat T(f)
\hat T(E)=b \hat T(E_0)$. Moreover, the equivalence class of $f:
E\rightarrow E_0$ is determined by $b \mod \hat{\O}^*$ \cite[Section
2.4]{We1}. Choose an integer $n>0$ such that $nf$ is an isogeny. Let
$E'$ be the fiber product as shown in  the following diagram.

\begin{equation} \xymatrix{
 E\ar@{-->}[r]^-{\phi} \ar[d]^-{nf} &E'\ar[r]^-{\phi_1}  \ar[d]^-{nf'} &E\ar[d]^-{nf} \cr
 E_0\ar[r]^-{\phi_0} &E_0' \ar[r]^-{\phi_0'} &E_0\cr}
 \end{equation}

Then there is a unique $\phi: E \rightarrow E'$ making the above
diagram commute. Let $\mathcal S_0(q)$ be the set of equivalence
classes  $[\phi: E\rightarrow E', f, f']$ of the diagrams:
\begin{equation}
\xymatrix{
  E\ar[r]^-{\phi} \ar@{~>}[d]^-{f} &E'
  \ar@{~>}[d]^-{f'} \cr
   E_0 \ar[r]^-{\phi_0} &E_0' \cr }
   \end{equation}
where $E\rightsquigarrow E_0$ stands for quasi-isogeny. Here two
such diagrams are equivalent if there are isomorphisms $g: E_1
\rightarrow E_2$ and $g': E_1'\rightarrow E_2'$ such that the
following diagram commutes:
\begin{equation}
\xymatrix{ E_1 \ar[rr]^-{\phi_1} \ar@{~>}[dd]^-{f_1}
\ar@{-->}[dr]^-{g}& &E_1' \ar@{-->}[dr]^-{g'}\ar@{~>}[dd]^-{f_1'}
&\cr &E_2 \ar[rr]^-{\phi_2} \ar@{~>}[dl]^-{f_2} &
&E_2'\ar@{~>}[dl]^-{f_2'} \cr E_0\ar[rr]^-{\phi_0} & &E_0' &\cr}.
\end{equation}

Let  $S_0(q)$ be the set of equivalence classes $[\phi: E
\rightarrow E']$ of  degree $q$ isogenies of supersingular curves
over $\bar{\mathbb F}_p$. Then one has

\begin{prop}  \label{prop4.2} The map $b \in \mathbb B_f^* \mapsto [\phi:
E\rightarrow E', f, f']$ gives rise to a bijection between $\mathbb
B_f^*/\mathcal K$ and $\mathcal S_0(q)$. The map $b \in \mathbb
B_f^* \mapsto [\phi: E\rightarrow E']$ gives rise to a bijection
between $\mathbb B^* \backslash \mathbb B_f^*/\mathcal K$ and the
set $S_0(q)$.  Moreover, for $\alpha_0, \beta_0\in \mathbb
B=\End(E_0) \otimes \mathbb Q$, let $\alpha = f^{-1} \alpha_0 f,
\beta = f^{-1} \beta_0 f\in \End(E) \otimes \mathbb Q$. Then

(1) \quad $\alpha \in \End(E)$ if and only if $b^{-1} \alpha_0 b
\in \hat{\O} = \O \otimes \hat{\mathbb Z}$.

(2) \quad $\phi \alpha \phi^{-1} \in \End(E')$ if and only if
$\phi_0 b^{-1} \alpha_0 b  \phi_0^{-1} \in \End(E_0') \otimes
\hat{\mathbb Z}$.

(3) \quad $\alpha \in \End(\phi)$ if and only if $b^{-1} \alpha_0
b \in \End(\phi_0) \otimes  \hat{\mathbb Z}$.

(4) \quad  $\alpha + \beta\frac{D+\sqrt D}2 \in \pi^*
\End_{\O_F}(B)$ if and only if $ b^{-1} (\alpha_0 +\beta_0 \frac{D
+\sqrt D}2) b \in \pi_0^*
 (\End_{\O_F} (B_0) \otimes \hat{\mathbb Z}).
 $
\end{prop}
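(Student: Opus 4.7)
The plan is to prove both bijections by constructing the maps on each side and then verifying the four algebraic conditions as local Tate-module statements.

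First, I would recall the fundamental correspondence cited from \cite{We1}: isomorphism classes of pairs $(E, f)$ with $f \colon E \rightsquigarrow E_0$ a quasi-isogeny of supersingular elliptic curves over $\bF_p$ are in bijection with $\mathbb{B}_f^* / \hat{\mathcal{O}}^*$, the correspondence sending $b$ to the pair $(E, f)$ with $\hat{T}(f)\hat{T}(E) = b \hat{T}(E_0)$ (interpreting $T_p$ as the covariant Dieudonn\'e module with Frobenius compatibility). Under this bijection, left multiplication by $\mathbb{B}^*$ amounts to changing $f$ by an automorphism of $E_0$ in its isogeny class, so $\mathbb{B}^* \backslash \mathbb{B}_f^* / \hat{\mathcal{O}}^*$ parametrizes just the underlying curves $E$ up to isomorphism.

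Second, I would enhance this to incorporate the degree-$q$ cyclic isogeny. The key observation is that $\hat{\mathcal{O}}^* / \mathcal{K}$ is concentrated at $q$ and equals $\GL_2(\mathbb{Z}_q)/K_0(q) \cong \mathbb{P}^1(\mathbb{F}_q)$, the set of $q+1$ lines in $E_0[q]$, hence the set of cyclic subgroups of order $q$. Given $b \in \mathbb{B}_f^*$, the image of $b_q^{-1}(\ker \phi_0)$ under the $\phi_0$-normal basis identification produces a cyclic subgroup $C \subset E[q]$; I set $E' = E/C$ and let $\phi \colon E \to E'$ be the quotient, with $f'$ then uniquely determined by the pullback square. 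Replacing $b$ by $b\kappa$ with $\kappa \in \mathcal{K}$ preserves $C$ and hence $(E', \phi, f')$, so the construction descends to $\mathbb{B}_f^* / \mathcal{K}$. The inverse map reads off $b \bmod \mathcal{K}$ from the induced action of $(f, f')$ on Tate and Dieudonn\'e modules: the data of $f$ alone recovers $b$ modulo $\hat{\mathcal{O}}^*$, and the compatibility $\phi_0 \circ nf = nf' \circ \phi$ pins down $b_q$ further, modulo the stabilizer $K_0(q)$ of $\ker \phi_0$. Quotienting by $\mathbb{B}^*$ on the left then forgets the choice of $f$, giving the second bijection $\mathbb{B}^* \backslash \mathbb{B}_f^* / \mathcal{K} \leftrightarrow S_0(q)$.

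Finally, I would verify (1)--(4) by translating each to the Tate-module side. Under the identification $\hat{T}(E) = b\hat{T}(E_0)$, an element $\alpha = f^{-1}\alpha_0 f$ acts on $\hat{T}(E)$ via $b^{-1}\alpha_0 b$ acting on $\hat{T}(E_0)$. Integrality of $\alpha$ is equivalent to preservation of $\hat{T}(E)$, giving (1); the analogous argument on $\hat{T}(E')$, for which $b' = b$ at primes $l \neq q$ and $b'_q = \phi_0 b_q \phi_0^{-1}$ at $q$, yields (2); the conjunction gives (3); and Corollary \ref{cor2.4} together with the $\mathcal{O}_F$-linear decomposition $\alpha + \beta \frac{D+\sqrt D}{2}$ gives (4).

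The main obstacle will be setting up the construction at $p$, where one works with covariant Dieudonn\'e modules and must check the Frobenius-compatibility of the adelic element $b_p$; and verifying that the subgroup $C$ produced in stage two is genuinely cyclic of order $q$ rather than contained in a larger $q$-power torsion subgroup. Both reduce to the integrality of $b$ at $q$ modulo $\mathcal{K}_q$ and the exact choice of $\phi_0$-normal basis.
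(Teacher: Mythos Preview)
Your proposal is correct and follows essentially the same approach as the paper: the paper defers the two bijections entirely to the argument of \cite[Section 2.4]{We1} and then proves (1)--(4) by the same Tate-module translation you describe, reducing (2) to (1) via $b' = \phi_0 b \phi_0^{-1}$, taking (3) as the conjunction of (1) and (2), and deducing (4) from Corollary \ref{cor2.4} together with the characterization of $\phi^{-1}\Hom(E,E')$. Your treatment of the bijection via $\hat{\mathcal O}^*/\mathcal K \cong \mathbb P^1(\mathbb F_q)$ is more explicit than the paper's one-line citation, but it unwinds to the same fiber-product construction given just before the proposition.
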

\begin{proof}   The same argument as in \cite[Section 2.4]{We1} gives the bijections.

(1) \quad Clearly, $\alpha \in \End(E)$ if and only if $\hat
T(\alpha) \hat T(E) \subset \hat T(E)$. If $b^{-1} \alpha_0 b \in
\hat\O$, then
\begin{align*}
 \hat T(\alpha) \hat T(E) &= \hat T(f)^{-1} \hat T(\alpha_0)  \hat
 T(f) \hat T(E)
  =\hat T(f)^{-1} b b^{-1} \alpha_0  b \hat T(E_0)
  \\
   &
  \subset \hat T(f)^{-1} b \hat T_0(E_0) =\hat T(f)^{-1} \hat T(f)
  \hat T(E) = \hat T(E), \end{align*}
and thus $\alpha \in \End(E)$. Here we identify $\alpha_0$ with
$\hat T(\alpha_0) \in  \End^0( \hat T(E))$.
 Reversing the procedure with $\alpha_0 = f \alpha f^{-1}$, one
 sees that $b^{-1} \alpha_0 b \in \hat\O$ if $\alpha \in \End(E)$.

(2) \quad Since
$$ \hat T(\phi \alpha \phi^{-1})
 =  \hat T(\phi f^{-1}) \hat T(\alpha_0) \hat T(f \phi^{-1})
 = \hat T(f')^{-1}  \hat T(\phi_0 \alpha_0 \phi_0^{-1}) \hat
 T(f'),
 $$
 the equivalence class of $E'\rightsquigarrow E_0'$ is associated
 to $b'=\phi_0 b \phi_0^{-1}$ when $E \rightsquigarrow E_0$ is
 associated to $b$. Now (2) follows from $(1)$.
 $(3)$ follows from $(1)$ and $(2)$ since $\alpha \in
\End(\phi)$ if and only if $\alpha \in \End(E)$ and $\phi \alpha
\phi^{-1} \in \End(E')$.

 (4) \quad Since
 $$
 \hat T(\phi  \alpha) = \hat T(f')  (\phi_0 b \phi_0^{-1})
 \phi_0(b^{-1}\alpha_0 b),
 $$
$ \alpha \in  \phi^{-1} \Hom(E, E')$ if and only if $b^{-1}
\alpha_0 b \in \phi_0^{-1} \Hom(\hat T(E), \hat T(E'))$. So by (1)
and (2) (more precisely their  local analogue at $q$) and
Corollary \ref{cor2.4}, one has
\begin{align*}
  &\alpha +\beta \frac{D +\sqrt D}{2} \in \pi^* \End_{\O_F}(B)
  \\
  &\Leftrightarrow  \alpha, \beta \in  \phi^{-1} \Hom(E, E')
  \hbox{ and } (\ref{eqQ1})
  \\
  &\Leftrightarrow b^{-1} \alpha_0 b, b^{-1} \beta_0 b \in \phi_0^{-1} \Hom(\hat T(E), \hat T(E')), \hbox{
and } (\ref{eqQ1}) \hbox{ for } (b^{-1} \alpha_0 b, b^{-1} \beta_0
b)
\\
 &\Leftrightarrow b^{-1} (\alpha_0 +\beta_0 \frac{D+\sqrt D}2)  b
 \in \pi_0^*( \End_{\O_F} (B_0) \otimes \hat{ \mathbb Z})
\end{align*}
as claimed.
\end{proof}

{\bf Proof of Theorem \ref{theo4.1}:} Let
\begin{equation}
f_{\mu n}(g) = \sum_{\substack{ \vec x \in V^2 \\  T(\vec x)
=T_q(\mu n)} } \Psi(g^{-1}.\vec x).
\end{equation}
Then $f_{\mu n} $ is left $\mathbb B^*$-invariant and right
$\mathcal K$-invariant. We claim
\begin{equation} \label{eqold4.9}
\beta(p, \mu n) = \int_{\mathbb B^* \backslash \mathbb
B_f^*/\mathcal K} f_{\mu n}(g) dg .
\end{equation}
Indeed, write $\mathbb B_f^* = \bigsqcup_j \mathbb B^* b_j \mathcal
K$ with $b_j \in \mathbb B_f^*$, and let $[\phi_i: E_i \rightarrow
E_i'] \in S_0(q)$ be the associated equivalence class of cyclic
isogenies as given in Proposition \ref{prop4.2}. Since the map
$$
\mathbb B^* \times \mathcal K \rightarrow  \mathbb B^* b_j\mathcal
K,  \quad (b, k) \mapsto bb_j k
$$
has fiber $\mathbb B^* \cap b_j \mathcal K b_j^{-1}$ at $b_j$, one
has
$$
\int_{\mathbb B^* \backslash \mathbb B_f^*/\mathcal K} f_{\mu n}(g)
dg =\sum_j f(b_j)\int_{\mathbb B\backslash \mathbb B b_j \mathcal
K/\mathcal K} dg= \sum_{j} \frac{1}{\#\mathbb B^* \cap b_j \mathcal
K b_j^{-1}} f_{\mu n}(b_j).
$$
Let $[\phi_j: E_j\rightarrow E_j'] \in S_0(q)$ be associated to
 $b_j$, and choose $f_j: E_j \rightsquigarrow E_0$ and $f_j' \rightsquigarrow E_0'$
  so that $[\phi_j: E_j\rightarrow E_j', f_j, f_j'] \in \mathcal S_0(q)$ is
  associated to $b_j$ by Proposition \ref{prop4.2}. For $\vec x ={}^t(\delta_0, \beta_0) \in V^2$ with $T(\vec
 x) =T_q(\mu n)$, one has by definition
$\Psi(\vec x) =1$ if and only if $\delta_0 + \beta_0 \frac{D+\sqrt
D}2 \in \pi_0^*(\End_{\O_F}(B_0))$, and for $\vec x ={}^t(\delta_0,
\beta_0) \in V(\A_f)^2$, $\Psi(\vec x) =1$ if and only if $\delta_0
+ \beta_0 \frac{D+\sqrt D}2 \in \pi_0^*(\End_{\O_F}(B_0) \otimes
\hat{\mathbb Z})$. So
  one has by Proposition \ref{prop4.2}
 \begin{align*}
  \Psi(b_j^{-1}.\vec x) =1 \Leftrightarrow \delta_j + \beta_j
  \frac{D+\sqrt D}2 \in \pi^* \End_{\O_F}(B_j)
  \end{align*}
  where $\delta_j =f^{-1} \delta_0 f_j$ and $\beta_j =f_j^{-1} \delta_0
  f_j$. So
   $$
    f_{\mu n}(b_j) = R(\phi_j, T_q(\mu n)).
    $$
   Next for $\delta_0 \in \mathbb B^*$, one has by Proposition
  \ref{prop4.2}
  \begin{align*}
  \delta_0 \in \mathbb B^* \cap b_j \mathcal K b_j^{-1}
   &\Leftrightarrow b_j^{-1} \delta_0 b_j \in \mathcal K
   =(\End(\phi_0) \otimes \hat{\mathbb Z})^*
   \\
    &\Leftrightarrow \delta =f_j^{-1} \delta_0 f_j \in \Aut(\phi_j).
    \end{align*}
    So $\# \mathbb B^* \cap b_j \mathcal K b_j^{-1} =\#\Aut(\phi_j)$,
    and thus
\begin{align*}
\int_{\mathbb B^* \backslash \mathbb B_f^*/\mathcal K} f_{\mu n}(g)
dg &= \sum_{j} \frac{1}{\#\mathbb B^* \cap b_j \mathcal K b_j^{-1}}
f_{\mu n}(b_j)
\\
 &=\sum_j \frac{1}{\# \Aut(\phi_j)} R(\phi_j, T_q(\mu n))
 \\
  &= \beta(p, \mu n)
\end{align*}
by Proposition \ref{prop4.2}. This proves claim (\ref{eqold4.9}).
If there is no $\vec x \in V^2$ such that $T(\vec x) =T_q(\mu n)$,
one has clearly $\beta(p, \mu n) =0$ by (\ref{eqold4.9}). At the
same time, the Hasse principle asserts that there is no $\vec x \in
V(\A_f)^2$ with $T(\vec x) =T_q(\mu n)$, and thus the right hand
side of (\ref{eq4.7}) is zero too, Theorem \ref{theo4.1} holds
trivially in this case. Now assume there is a  $\vec x \in V^2$ such
that $T(\vec x) =T_q(\mu n)$, and  choose  such a vector  $\vec
x_0$. By Witt's theorem, for any $\vec x \in V^2$ with $T(\vec x)
=T_q(\mu n)$, there is $b \in \mathbb B^*$ such that $b^{-1}. \vec
x_0 =\vec x$. It is easy to check that the stabilizer of $\vec x_0$
in $\mathbb B^*$ is $\mathbb Q^*$.  So we have
\begin{align*}
\int_{\mathbb B^* \backslash \mathbb B_f^*/\mathcal K} f_{\mu n}(g)
dg &=\int_{\mathbb B^* \backslash \mathbb B_f^*/\mathcal K}
 \sum_{b\in \mathbb Q^* \backslash \mathbb B^*} \Psi( (bg)^{-1}.\vec
 x_0) dg
 \\
  &=\int_{\mathbb Q^* \backslash \mathbb B_f^*/\mathcal K} \Psi(g^{-1}.\vec x_0)
dg
\\
  &= \int_{\mathbb Q^* \backslash \mathbb Q_f^*} d^*x \cdot
   \int_{\mathbb Q_f^* \backslash \mathbb B_f^*/\mathcal K} \Psi(g^{-1}.\vec x_0)
dg.
\end{align*}
Here $d^*x$ is the Haar measure on $\mathbb Q_f^* =\A_f^*$ such that
$\hat{\mathbb  Z^*}$ has Haar measure $1$. Now Theorem \ref{theo4.1}
follows from the well-known fact
$$
\int_{\mathbb Q^* \backslash \mathbb Q_f^*} d^*x =\frac{1}2,
$$
since $\Q_f^* =\mathbb Q^* \hat{\mathbb Z^*}$ and $\mathbb Q^* \cap
\hat{\mathbb Z^*} =\{ \pm 1\}$.

\section{Local computation} \label{sect5}

Let the notation be as in Section \ref{sect4}.  The main purpose of
this section is to compute the local integrals
\begin{equation}
\beta_l(T_q(\mu n), \Psi_l) = \int_{\mathbb Q_l^* \backslash \mathbb
B_l^*/\mathcal K_l} \Psi_l(h^{-1}.\vec x_0) dh
\end{equation}
 where $\vec x_0 \in V_l^2$ with $ T(\vec x_0)=T_q(\mu n)$, and $dh$ is
 a Haar measure on $\mathbb B_l^*$. It is a long calculation
 for $l=q$ and is quite  technical. We summarize the result as two
 separate theorems for the convenience of the reader. Theorem
 \ref{theo5.1} will be restated as Propositions \ref{oldprop5.3} and
 \ref{oldprop5.4}, while Theorem \ref{theo5.2} will be restated as
 Propositions \ref{oldprop5.5}, \ref{oldprop5.9}, and
 \ref{oldprop5.10}

 \begin{theo} \label{theo5.1}  For $l \ne q$, $T_q(\mu n)$ is
 $\mathbb Z_l$-equivalent to $\diag(\alpha_l, \alpha_l^{-1} \det
 T_q(\mu n))$ with $\alpha_l \in \mathbb Z_l^*$. Let $t_l = \ord_l
 \frac{q^2 \tilde D -n^2}{4D q^2}$. Then
 $$
\beta_l(T_q(\mu n), \Psi_l)=\begin{cases}
 1 -(-\alpha_p, p)_p^{t_p} &\ff l =p,
 \\
 \frac{1+(-1)^{t_l}}2 &\ff l \ne p, (-\alpha_l, l)_l=-1,
 \\
  t_l +1 &\ff l\ne p, (-\alpha_l, l)_l =1.
  \end{cases}
  $$
\end{theo}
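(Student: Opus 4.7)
The plan is to interpret $\beta_l(T_q(\mu n),\Psi_l)$ as a weighted count of $\mathcal K_l$-orbits of pairs of trace-zero elements with prescribed Gram matrix, and then compute this count via the theory of optimal embeddings of quadratic orders into quaternion orders.

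First I would apply Witt's theorem over $\Q_l$: the $\mathbb B_l^*$-orbit of $\vec x_0$ under conjugation consists of every $\vec x\in V_l^2$ with $T(\vec x)=T_q(\mu n)$. Since $\det T_q(\mu n)\neq 0$, the components $X_1,X_2$ span a non-degenerate $2$-dimensional subspace of $V_l$, whose commutator in $\mathbb B_l^*$ is just the center $\Q_l^*$. Hence the stabilizer of $\vec x_0$ in $\Q_l^*\backslash\mathbb B_l^*$ is trivial, and with $dh$ normalized so that $\mathcal K_l$ has measure $1$ modulo the center, we obtain
\[
\beta_l(T_q(\mu n),\Psi_l)=\#\bigl\{\vec x\in L_l^2:T(\vec x)=T_q(\mu n)\bigr\}\big/\mathcal K_l .
\]

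Next I diagonalize. Because $T_q(\mu n)\sim\operatorname{diag}(\alpha_l,\alpha_l^{-1}\det T_q(\mu n))$ with $\alpha_l\in\Z_l^*$, I may assume $\vec x_0=(X_1,X_2)$ with $(X_1,X_2)=0$, $Q(X_1)=\alpha_l$, $Q(X_2)=\alpha_l^{-1}\det T_q(\mu n)$. Since $\alpha_l$ is a unit, $X_1$ generates the \'etale quadratic subalgebra $F_l=\Q_l(X_1)=\Q_l(\sqrt{-\alpha_l})$ of $\mathbb B_l$, whose splitting behavior is governed precisely by the Hilbert symbol $(-\alpha_l,l)_l$. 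By Noether--Skolem and the theory of optimal embeddings, once an embedding $\Z_l[X_1]\hookrightarrow\O_l$ exists there is essentially one $\mathcal K_l$-conjugacy class of such $X_1$, and its centralizer in $\mathbb B_l^*$ is $F_l^*$. The orthogonal complement $X_1^\perp\subset V_l$ is a rank-$2$ quadratic space isomorphic to the norm form of $F_l$ (suitably scaled), and $X_1^\perp\cap L_l$ is a corresponding $\O_{F_l}$-module on which the centralizer acts by multiplication.

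The count therefore reduces to counting $\O_{F_l}^*$-equivalence classes of elements of norm $\alpha_l^{-1}\det T_q(\mu n)=u\,l^{t_l}$ (with $u$ a unit) in $X_1^\perp\cap L_l$, equivalently the number of $\O_{F_l}$-ideals of norm $l^{t_l}$. This yields exactly the three cases: when $F_l/\Q_l$ is ramified (forcing $l=p$, since $\mathbb B_p$ is the unique division quaternion algebra and $V_p$ is anisotropic) one gets $1-(-\alpha_p,p)_p^{t_p}$; when $F_l/\Q_l$ is inert one gets $(1+(-1)^{t_l})/2$; when $F_l/\Q_l$ is split one gets the divisor count $t_l+1$.

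The main obstacle will be the dyadic prime $l=2$, where the definition $L=(\Z+2\O)\cap V$ gives a lattice with non-standard local structure at $2$ and requires care in classifying binary $\Z_2$-lattices in $X_1^\perp$. I expect to bypass a full classification by invoking the explicit local density formulas of \cite{YaDensity1} and \cite{YaDensity2} to pin down the $l=2$ case, as the formula's shape is unaffected by dyadic subtleties provided the Hilbert symbols are interpreted correctly. A secondary check is to verify that the Haar-measure normalization used locally is compatible with the global Tamagawa measure appearing in (\ref{eq4.7}), which is routine since $\operatorname{vol}(\mathcal K_l)=1$ for almost all $l$ under the Tamagawa normalization.
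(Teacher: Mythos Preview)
Your reinterpretation of $\beta_l$ as the number of $\mathcal K_l$-orbits of pairs in $L_l^2$ with Gram matrix $T_q(\mu n)$ is correct, and for $l\nmid 2p$ the reduction to counting ideals of $\O_{F_l}$ of norm $l^{t_l}$ goes through cleanly and reproduces the stated values. This is a genuinely different and more structural route than the paper's, which instead writes down the explicit coset decomposition $\Q_l^*\backslash\GL_2(\Q_l)=\bigcup h(r,u)\GL_2(\Z_l)$ of Lemma~\ref{oldlem5.2} and checks by hand, for a concrete $\vec x_0$, which cosets satisfy $h(r,u)^{-1}.\vec x_0\in L_l^2$ (Propositions~\ref{oldprop5.3} and~\ref{oldprop5.4}). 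Your approach explains the shape of the answer conceptually; the paper's is more elementary and self-contained.

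However, your treatment of $l=p$ has a real gap. The algebra $F_l=\Q_l(\sqrt{-\alpha_l})$ with $\alpha_l\in\Z_l^*$ is \emph{never} ramified for odd $l$; what is special at $l=p$ is that $\mathbb B_p$ is division, not that $F_p$ is ramified. When $(-\alpha_p,p)_p=-1$ (the only case with $\beta_p\ne 0$), $F_p$ is the \emph{unramified} quadratic field. Two things then go wrong with your count. First, there are \emph{two} $\O_p^*$-conjugacy classes of admissible $X_1$, not one: since $\Q_p^*\backslash\mathbb B_p^*/\O_p^*=\{1,\pi\}$ and the centralizer $F_p^*$ of $X_1$ is contained in $\Q_p^*\O_p^*$, the elements $X_1$ and $\pi X_1\pi^{-1}$ lie in distinct $\O_p^*$-orbits. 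Second, writing $\mathbb B_p=F_p\oplus F_pj'$ forces $j'^2$ to be a uniformizer rather than a unit, so $Q(fj')=-p\,N_{F_p/\Q_p}(f)$; solving $Q(X_2)=\alpha_p^{-1}\det T_q(\mu n)$ requires $N(f)$ to have valuation $t_p-1$, not $t_p$. Taken literally, your ``ideals of norm $l^{t_l}$'' recipe at $l=p$ would give $\beta_p\ne 0$ precisely when $t_p$ is even, which is the wrong parity. The paper bypasses all of this: because $L_p$ is the maximal lattice, one has $\Psi_p(h^{-1}.\vec x_0)=1$ for \emph{every} $h$ as soon as a $\vec x_0\in L_p^2$ with $T(\vec x_0)=T$ exists, and $|\Q_p^*\backslash\mathbb B_p^*/\O_p^*|=2$ immediately gives $\beta_p=2$; existence of $\vec x_0$ is then a direct Hilbert-symbol check.
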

\begin{theo} \label{theo5.2}

(1) \quad If $q \nmid n$, then $\beta_q(T_q(\mu n), \Psi_q) =1$.

(2) \quad If $q|n$ and $t_q =\ord_q
 \frac{q^2 \tilde D -n^2}{4D q^2} =0$, then
$$
\beta_q(T_q(\mu n, \Psi_q) =\begin{cases}
  4 &\ff q \hbox{ split completely in } \tilde K,
  \\
   2 &\ff q \hbox{ inert in } \tilde F, q\O_{\tilde F} \hbox{ split
    in } \tilde K,
    \\
    0  &\hbox{otherwise}.
    \end{cases}
    $$

    (3) \quad If $q|n$ and $t_q =\ord_q
 \frac{q^2 \tilde D -n^2}{4D q^2} >0$, then $T_q(\mu n)$ is $\mathbb
 Z_q$-equivalent to $\diag(\alpha_q, \alpha_q^{-1} \det T_q(\mu n))$
 with $\alpha_q \in \mathbb Z_q^*$, and
$$
\beta_q(T_q(\mu  n),\Psi_q) =\begin{cases}
  0 &\ff (-\alpha_q, q)_q=-1,
  \\
  2(t_q+2) &\ff (-\alpha_q, q)_q =1.
  \end{cases}
  $$
\end{theo}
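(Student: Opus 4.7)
The plan is to evaluate $\beta_q(T_q(\mu n),\Psi_q)$ directly by parametrizing double cosets in $\mathbb Q_q^*\backslash \mathbb B_q^*/\mathcal K_q$. Since $\mathcal K_q=K_0(q)$ is the stabilizer of an oriented edge of the Bruhat--Tits tree $\mathcal T$ of $PGL_2(\mathbb Q_q)$, this quotient is in bijection with the oriented edges of $\mathcal T$, and a convenient set of representatives comes from the Iwasawa decomposition. For each such $h$, I would expand $h^{-1}X_{i,0}h$ and rewrite the condition $\Psi_q(h^{-1}.\vec x_0)=1$ as explicit congruence conditions on the entries of $X_{1,0}, X_{2,0}$ and the parameters of $h$.

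A key structural observation is that, using the splitting $F_q=F_{\mathfrak q}\oplus F_{\mathfrak q'}\cong \mathbb Q_q\oplus \mathbb Q_q$, the two mod-$q$ conditions defining $\Omega_q$ are precisely the assertions that $\alpha+\beta\omega_+$ lies in the Eichler-type lattice $\kzxz{\mathbb Z_q}{q^{-1}\mathbb Z_q}{q\mathbb Z_q}{\mathbb Z_q}$ and that $\alpha+\beta\omega_-$ lies in the maximal order $M_2(\mathbb Z_q)$, where $\omega_\pm=(D\pm\sqrt D)/2$. Using Lemma \ref{lemold1.1}, the Gram matrix of the transformed pair $(\alpha+\beta\omega_+,\alpha+\beta\omega_-)$ has off-diagonal entry $-\mu n_1$ and diagonal entries related to $\Delta,\Delta'$, tying the local integral directly to the arithmetic of $\tilde K/\tilde F$ at $q$.

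The three cases of the theorem then correspond to three qualitative regimes of $T_q(\mu n)$ at $q$. When $q\nmid n$, equation (\ref{eqold1.3}) forces some entries of $T_q(\mu n)$ to have $q$-adic valuation $-1$; a direct check on the tree shows that only the trivial coset produces an element of $\Omega_q$, giving $\beta_q=1$. When $q\mid n$ and $t_q=0$, the form is unimodular over $\mathbb Z_q$ and the integral reduces to a Hijikata-type count of $\mathcal O_{\tilde K,q}$-embeddings into the Eichler order $K_0(q)$, yielding $4$, $2$, or $0$ according to how $q$ splits in $\tilde F$ and then in $\tilde K/\tilde F$. When $q\mid n$ and $t_q>0$, the diagonalization $\diag(\alpha_q,\alpha_q^{-1}\det T_q(\mu n))$ makes local solvability equivalent to the Hilbert symbol identity $(-\alpha_q,q)_q=+1$; when it fails $\beta_q=0$, and when it holds a layer-by-layer enumeration of contributing edges at depths $0,1,\dots,t_q+2$ gives the predicted $2(t_q+2)$.

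The main obstacle will be Case 3, where the bookkeeping is most delicate: one must track how the two Eichler-type conditions on $\alpha+\beta\omega_\pm$ propagate under conjugation by upper- and lower-triangular elements at each depth of $\mathcal T$, and verify that the final count matches $2(t_q+2)$ without off-by-one errors. A secondary technical issue is the case $D\mid n$ noted in Remark \ref{remI2.2}, where both signs $\mu=\pm 1$ must be treated separately and shown to contribute consistently to the stated formula.
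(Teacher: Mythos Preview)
Your framework is essentially the paper's: transform to the pair $(\alpha+\beta\omega_+,\alpha+\beta\omega_-)$ (the paper's $\Psi_q'$ and $T_q'(\mu n)$, equation (\ref{eq5.14})), then enumerate cosets $h(r,u),h'(r,u)$ from Lemma \ref{oldlem5.2}. But you are missing the device that makes Cases (2) and (3) tractable. After diagonalizing $T_q'(\mu n)=gT\,{}^tg$ with $T=\diag(\epsilon_1,\epsilon_2 q^t)$, the paper observes (Lemma \ref{oldlem5.6}) that the transformed domain splits by inclusion--exclusion:
\[
\beta_q(T_q'(\mu n),\Psi_q')=\beta_q(T,\Psi_{v_1,v_2})+\beta_q(T,\Psi_{v_3,-v_4})-\beta_q(T,\Psi_0),
\]
where each $\Psi_{v_i,v_j}$ imposes only a \emph{single} linear mod-$q$ condition $v_iz_1+v_jz_2\equiv 0$, and moreover $\epsilon_1 v_1^2+\epsilon_2 q^t v_2^2=-\Delta$, $\epsilon_1 v_3^2+\epsilon_2 q^t v_4^2=-\Delta'$. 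This is what ties the count to the splitting of $\Delta,\Delta'$ in $\mathbb Z_q$ and hence to $\tilde K/\tilde F$ at $q$. In Case (3) the final answer $2(t_q+2)$ arises as $(2t+2)+(2t+2)-2t$, not from a single depth-by-depth count to level $t_q+2$; your proposed direct tracking of both Eichler conditions simultaneously would work in principle but is exactly the ``delicate bookkeeping'' this lemma is designed to avoid.

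A small correction in Case (1): it is not that only the trivial coset contributes. After the change of variables the candidates reduce to two cosets---the identity and the Weyl element $\kzxz{0}{1}{1}{0}$---and exactly one of them lands in $\Omega_q'$, according to which of $1\pm z_0\sqrt D$ vanishes mod $q$ (Proposition \ref{oldprop5.5}). The value $1$ is a dichotomy, not a uniqueness statement about the trivial coset.
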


 For any locally constant function  with compact support $f \in
 S(V_l^2)$ and a non-degenerate symmetric $2 \times 2$ matrix $T$
 over $\mathbb Q_l$, let
\begin{equation}
\gamma_l(T, f) =\int_{\mathbb Q_l^* \backslash \mathbb B_l^*}
f(h^{-1}.\vec x_0) dh
\end{equation}
with $T(\vec x_0) =T$. Then
\begin{equation}
\beta_l (T_q(\mu n), \Psi_l) = \frac{1}{\vol(K_l)} \gamma_l(T_q(\mu
n), \Psi_l).
 \end{equation}
Notice that $\beta_l$ is independent of the choice of the Haar
measure while $\gamma_l$ gives freedom of the choice of $f \in
S(V_l^2)$. We first give some general comments and lemmas.

When $l \ne p$, $\mathbb B_l^* =\GL_2(\mathbb Q_l)$ has two actions on
$V_l^2$, the orthogonal  action (by conjugation)
$$
h. {}^t(X_1, X_2) = {}^t(h X_1 h^{-1}, hX_2h^{-1})
$$
and the natural linear action
$$
\begin{pmatrix}  {g_1} &{g_2} \\ {g_3} &{g_4} \end{pmatrix} \begin{pmatrix} X_1 \\ X_2
\end{pmatrix} = \begin{pmatrix} g_1 X_1 +g_2 X_2 \\  g_3 X_1 + g_4
X_2 \end{pmatrix}
$$
To distinguish them, we write the orthogonal action as $h.x$. We
also have the linear action of $\GL_2(\mathbb Q_p)$ on $V_p^2$ while
$\mathbb B_p^*$ acts on $V_p^2$ orthogonally (by conjugation). These
two actions commute. This commutativity implies the following lemma
easily.

\begin{lem} \label{oldlem5.1}  Let  $T =g \tilde T \, {}^tg$ with $g
\in \GL_2(\mathbb Q_l)$. Then for any $f \in S(V_l^2)$
$$
\gamma_l(T, f) = \gamma_l(\tilde T, f_{g^{-1}})
$$
where $f_g(\vec x) = f(g^{-1} \vec x)$.
\end{lem}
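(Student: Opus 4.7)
The plan is to exploit the commutativity of the two actions noted just before the statement: the orthogonal conjugation action of $\mathbb B_l^*$ on $V_l^2$ commutes with the linear $\GL_2(\mathbb Q_l)$ action, and the Gram matrix $T(\vec x) = \frac{1}{2}(\vec x, \vec x)$ is preserved by the former while transforming as $T(g\vec x) = g\, T(\vec x)\, {}^tg$ under the latter. So the identity in question is essentially a change-of-variables statement.

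First I would pick $\vec x_0 \in V_l^2$ with $T(\vec x_0) = T$, and set $\vec y_0 := g^{-1} \vec x_0$. Using the transformation rule for $T$ under the linear action one gets
\begin{equation*}
T(\vec y_0) \;=\; g^{-1}\, T(\vec x_0)\, {}^t g^{-1} \;=\; g^{-1}(g\tilde T\, {}^tg)\, {}^t g^{-1} \;=\; \tilde T,
\end{equation*}
so $\vec y_0$ is a valid base point for $\gamma_l(\tilde T, \cdot)$. Next, because the orthogonal action of $h \in \mathbb B_l^*$ commutes with the linear action of $g \in \GL_2(\mathbb Q_l)$, one has $h^{-1}.\vec x_0 = h^{-1}.(g\vec y_0) = g\,(h^{-1}.\vec y_0)$. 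Substituting into the defining integral gives
\begin{equation*}
\gamma_l(T,f) \;=\; \int_{\mathbb Q_l^*\backslash \mathbb B_l^*} f\bigl(g(h^{-1}.\vec y_0)\bigr)\,dh \;=\; \int_{\mathbb Q_l^*\backslash \mathbb B_l^*} f_{g^{-1}}\bigl(h^{-1}.\vec y_0\bigr)\,dh \;=\; \gamma_l(\tilde T, f_{g^{-1}}),
\end{equation*}
which is the desired equality.

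There is no serious obstacle here; the only points to check carefully are (i) that $g$ acts trivially on the measure $dh$ on $\mathbb Q_l^*\backslash \mathbb B_l^*$ (automatic, since $g$ does not act on $h$ at all), and (ii) that the domain of integration is genuinely independent of the base point $\vec x_0$, which follows from the fact that $\mathbb Q_l^*$ is precisely the stabilizer of any nondegenerate $\vec x_0$ in $\mathbb B_l^*$ under conjugation. Once these are acknowledged, the identity is immediate from the commuting-actions observation.
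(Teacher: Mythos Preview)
Your proof is correct and takes essentially the same approach as the paper, which simply states that the commutativity of the two actions implies the lemma easily without spelling out details. You have accurately filled in the argument: choosing $\vec y_0 = g^{-1}\vec x_0$, checking $T(\vec y_0)=\tilde T$, and using commutativity to rewrite the integrand is exactly what is intended.
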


The following lemma is well-known.

\begin{lem} \label{oldlem5.2} Write $h(r, u) = \kzxz {l^r} {u} {0} {1} $ and $h'(r, u)
= h(r, u) \kzxz {0} {1} {1} {0}$ for $ r \in \mathbb Z$ and $u \in
\mathbb Q_l$. Then

$$
\mathbb Q_l^* \backslash  \GL_2(\mathbb Q_l) = \bigcup_{ r \in
\mathbb Z, u \hbox{mod } l^r } h(r, u) \GL_2(\mathbb Z_l),
$$

$$
\mathbb Q_q^* \backslash  \GL_2(\mathbb Q_q) = \bigcup_{r \in
\mathbb Z, u \hbox{mod }l^r} h(r, u) K_0(q) \bigcup (\bigcup_{r \in
\mathbb Z, u \hbox{mod } l^{r+1}} h'(r, u) K_0(q)),
$$
and
$$
\mathbb Q_p^*\backslash  \mathbb B_p^* = \O_p^* \cup \pi \O_p^*
$$
where $\pi \in \mathbb B_p^*$ with $\pi^2 =p$.
\end{lem}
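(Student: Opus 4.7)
All three identities are standard coset decompositions, and I would assemble them separately: the first via the Iwasawa decomposition of $\GL_2$ over a local field, the second by refining it with the bijection $\GL_2(\mathbb Z_q)/K_0(q)\cong \mathbb P^1(\mathbb F_q)$, and the third by using that $\mathbb B_p$ is the unique ramified quaternion algebra over $\mathbb Q_p$.

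For the first identity, I would invoke the Iwasawa decomposition $\GL_2(\mathbb Q_l) = B(\mathbb Q_l)\cdot\GL_2(\mathbb Z_l)$, where $B$ is the upper-triangular Borel. Any $g \in \GL_2(\mathbb Q_l)$ thus equals $\kzxz{a}{b}{0}{d}k$ with $k \in \GL_2(\mathbb Z_l)$. Dividing by $d$ using the center on the left reduces to $\kzxz{a/d}{b/d}{0}{1}$, and right-multiplication by $\diag(\epsilon^{-1},1)\in B(\mathbb Z_l)\subset\GL_2(\mathbb Z_l)$, where $a/d = l^r\epsilon$ with $\epsilon\in\mathbb Z_l^*$, produces the normal form $h(r,u)$. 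To decide when $h(r,u)$ and $h(r',u')$ give the same class, I would compute
\[
h(r,u)^{-1}h(r',u') = \kzxz{l^{r'-r}}{l^{-r}(u'-u)}{0}{1}
\]
and demand it lie in $\mathbb Q_l^*\GL_2(\mathbb Z_l)$; comparing determinants and entry valuations forces $r=r'$ and $u-u'\in l^r\mathbb Z_l$, which is precisely the stated ``$u$ mod $l^r$'' parameter range.

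For the second identity, I would refine each right $\GL_2(\mathbb Z_q)$-coset from the first identity into its $K_0(q)$-subcosets via the bijection $\GL_2(\mathbb Z_q)/K_0(q)\cong\mathbb P^1(\mathbb F_q)$ of cardinality $q+1$.  The projective line splits into two Bruhat cells under $B(\mathbb F_q)$: a one-point cell, whose lift keeps the $h(r,u)$ representatives unchanged; and a $q$-point affine cell, whose lifts are the Weyl-flipped matrices $h'(r,u)=h(r,u)\kzxz{0}{1}{1}{0}$, and whose extra $q$-fold freedom produces the refined range $u$ mod $q^{r+1}$.  The verification that the two branches are inequivalent and together exhaust the coset space follows from the same determinant-and-entry valuation check as above, now carrying an additional congruence in the lower-left entry modulo $q$.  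This bookkeeping---in particular getting both the enlarged range on the $h'$-branch and the absence of overlap with the $h$-branch correct---is the only substantive obstacle, but the $\mathbb P^1(\mathbb F_q)$-description of $\GL_2(\mathbb Z_q)/K_0(q)$ keeps it transparent.

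For the third identity, I would exploit that $\mathbb B_p$ is the unique quaternion division algebra over $\mathbb Q_p$: the reduced norm extends the $p$-adic valuation to a map $v:\mathbb B_p^*\to\tfrac{1}{2}\mathbb Z$ with $\O_p=\{x:v(x)\geq 0\}$ the unique maximal order, $\O_p^*=\{v(x)=0\}$, and $\pi$ satisfying $v(\pi)=\tfrac{1}{2}$ and $\pi^2=p$. Hence $\mathbb B_p^* = \bigsqcup_{n\in\mathbb Z}\pi^n\O_p^*$. Since $\mathbb Q_p^*\subset\pi^{2\mathbb Z}\O_p^*$, only the parity of $n$ survives after quotienting by $\mathbb Q_p^*$ on the left, leaving exactly the two cosets $\O_p^*$ and $\pi\O_p^*$.
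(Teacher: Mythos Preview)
Your proposal is correct and follows the standard route; the paper itself gives no proof at all, simply labeling the lemma ``well-known'' and moving on. Your argument via Iwasawa decomposition, the refinement through $\GL_2(\mathbb Z_q)/K_0(q)\cong\mathbb P^1(\mathbb F_q)$, and the valuation argument on the division algebra $\mathbb B_p$ is exactly what one would supply if asked to fill in the details, and the bookkeeping you flag (distinctness of the $h$- and $h'$-branches, and the enlarged range $u\bmod q^{r+1}$ on the latter) checks out: one computes $h'(r,u)^{-1}h'(r,u')=\kzxz{1}{0}{q^{-r}(u'-u)}{1}$, which lies in $K_0(q)$ precisely when $u\equiv u'\bmod q^{r+1}$, while $h(r,u)^{-1}h'(r,u')$ has lower-left entry a unit and so never lies in $\mathbb Q_q^*K_0(q)$.
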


\subsection{The case $l \nmid pq$}

\begin{prop} \label{oldprop5.3} For $l \nmid p q$, $T_q(\mu n)$ is $\mathbb
Z_l$-equivalent to $\diag(\alpha_l, \alpha_l^{-1} \det T_q(\mu n))$
with $\alpha_l \in \mathbb Z_l^*$. Let $t_l=\ord_l \det T_q(\mu n) =
\ord_{l} \frac{q^2 \tilde D -n^2}{4 Dq^2} $. Then
$$
\beta_l(T_q(\mu n), \Psi_l) =\begin{cases}
 \frac{1+(-1)^{t_l}}2 &\ff (-\alpha_l, l)_l=-1,
 \\
  t_l + 1   &\ff (-\alpha_l, l)_l=1.
  \end{cases}
  $$
  \end{prop}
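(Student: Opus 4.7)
The plan is to reduce the local integral to a finite sum via Lemmas \ref{oldlem5.1} and \ref{oldlem5.2}, and then to recognize the resulting count as the number of ideals of norm $l^{t_l}$ in a quadratic $\Z_l$-algebra. Since $l\nmid pq$ we have $\mathbb B_l \cong M_2(\Q_l)$ and $\mathcal K_l = \GL_2(\Z_l)$, and the function $\Psi_l=\cha(L_l^2)$ is invariant under the conjugation action of $\mathcal K_l$, so the integrand descends to a function on $\Q_l^*\bs \GL_2(\Q_l)/\mathcal K_l$.

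First I would diagonalize $T_q(\mu n)$ over $\Z_l$. Since $\det T_q(\mu n)$ has valuation $t_l$ and the form is nondegenerate, elementary divisor theory produces $g\in\GL_2(\Z_l)$ with $T_q(\mu n) = g\,\diag(\alpha_l,\alpha_l^{-1}\det T_q(\mu n))\,{}^tg$ for some $\alpha_l\in\Z_l^*$. By Lemma \ref{oldlem5.1}, and since $g\in\mathcal K_l$ preserves $L_l^2$ under the orthogonal action (so $(\Psi_l)_{g^{-1}}=\Psi_l$), this reduces the computation to the diagonal case. I would then take $\vec x_0={}^t(X_1,X_2)\in V_l^2$ with $X_1\perp X_2$, $Q(X_1)=\alpha_l$, and $Q(X_2)=\alpha_l^{-1}\det T_q(\mu n)$, written explicitly in terms of a standard basis of the trace-zero quaternions.

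With $\vec x_0$ fixed, Lemma \ref{oldlem5.2} rewrites the integral (up to the volume of $\mathcal K_l$) as the finite sum
\begin{equation*}
\beta_l(T_q(\mu n),\Psi_l) \;=\; \sum_{r\in\Z,\ u\bmod l^r}\Psi_l\bigl(h(r,u)^{-1}.\vec x_0\bigr),
\end{equation*}
and a direct computation in coordinates shows the summand equals $1$ precisely when $(r,u)$ satisfies a pair of divisibility conditions depending only on $\alpha_l$ and $t_l$. The resulting count is exactly the number of integral ideals of norm $l^{t_l}$ in the quadratic $\Z_l$-algebra $\Z_l[\sqrt{-\alpha_l}]$: this equals $t_l+1$ when $(-\alpha_l,l)_l=1$ (the split case) and $(1+(-1)^{t_l})/2$ when $(-\alpha_l,l)_l=-1$ (the inert case), matching the claimed formula.

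The main obstacle I anticipate is the case $l=2$. There the Gross-type lattice $L_2=(\Z_2+2\O_2)\cap V_2$ is a proper sublattice of $V_2\cap M_2(\Z_2)$, so both the diagonalization step and the coset-by-coset integrality check have to be redone with this finer lattice. The congruences (\ref{eqI2.9}), namely $a\equiv -w_0^2,\ c\equiv -w_1^2\pmod 4$ and $b\equiv -w_0w_1\pmod 2$ established in Section \ref{sect3}, are exactly what is needed to keep $\alpha_2$ a unit and to ensure that the corrected count at $l=2$ still yields the stated formula.
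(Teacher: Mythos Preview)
Your approach is essentially the paper's: diagonalize over $\Z_l$, invoke Lemma \ref{oldlem5.1} to pass to the diagonal form, then use the coset decomposition of Lemma \ref{oldlem5.2} and check integrality of $h(r,u)^{-1}.\vec x_0$ coset by coset, with $l=2$ handled separately. One small slip: the invariance needed for $(\Psi_l)_{g^{-1}}=\Psi_l$ in Lemma \ref{oldlem5.1} is under the \emph{linear} action of $\GL_2(\Z_l)$ on pairs, not the orthogonal (conjugation) action; your final repackaging of the coset count as the number of ideals of norm $l^{t_l}$ in $\Z_l[\sqrt{-\alpha_l}]$ is a clean interpretation the paper does not make explicit, though its direct case analysis via the conditions (\ref{oldeq5.14}) amounts to the same thing.
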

\begin{proof} Write  $T_q(\mu n) =g \diag (\alpha_l, \alpha_l^{-1} \det T_q(\mu n))
{}^t g$ with some $g \in \GL_2(\mathbb Z_l)$. Since $\Psi_l$ is
$\GL_2(\mathbb Z_l)$-invariant under the linear action, $(\Psi_l)_g
= \Psi_l$. So Lemma \ref{oldlem5.1} implies
$$
\beta_l(T_q(\mu n), \Psi_l) = \beta_l(\diag (\alpha_l, \alpha_l^{-1}
\det T_q(\mu n)), \Psi_l).
$$
In general, for $T = \diag(\epsilon_1, \epsilon_2 l^t)$ with
$\epsilon_i \in \mathbb Z_l^*$, $t \in \mathbb Z_{\ge 0}$, and
$(-\epsilon_1, -\epsilon_2)_l=1$ (it is only a condition for $l=2$
and is true in our case $(\alpha_l, \alpha_l^{-1} \det T_q(\mu n))$
\cite[Lemma 4.1]{m=1}), let
\begin{equation}
X_1 = \kzxz {0} {1} {-\epsilon_1} {0} \in L_l, \quad  Q(X_1)
=\epsilon_1,
\end{equation}
Then
\begin{equation}
(\mathbb Q_l X_1)^\perp = \{ \kzxz {x} {y} {\epsilon_1 y} {-x} \in
V_l: \, x, y \in \mathbb Q_l \}.
\end{equation}
So there is $\vec x={}^t(X_1, X_2) \in V_l^2$ with $T(\vec x)= T$ if
and only if there are $x, y \in \mathbb Q_l$ such that
\begin{equation} \label{oldeq5.9}
x^2 + \epsilon_1 y^2 =-\epsilon_2 l^t,
\end{equation}
which is equivalent to $(-\epsilon_1, -\epsilon_2 l^t)_l=1$, i.e.,
\begin{equation} \label{oldeq5.10} (-\epsilon_1, l)_l^t =1.
\end{equation}
Assume (\ref{oldeq5.10}) and $l \ne 2$. When $(-\epsilon_1, l)_l=-1$
and $t$ even, $(\ref{oldeq5.9})$ has a solution $x_0, y_0 \in
l^{\frac{t}2} \mathbb Z_l^*$. When $(-\epsilon_1, l)_l=1$,
(\ref{oldeq5.9}) has a solution $x_0, y_0 \in \mathbb Z_l^*$. Fix
such a solution, and let
\begin{equation} \label{oldeq5.11}
X_2 =\kzxz {x_0} {y_0} {\epsilon_1 y_0} {-x_0},\quad   \vec
x_0={}^t(X_1, X_2) \in L_l^2,
\end{equation}
with $T(\vec x_0) = T$. A simple calculation gives
\begin{align}
h(r, u)^{-1}.X_1 &= \kzxz {\epsilon_1 u} {l^{-r} (1+ \epsilon_1
u^2)} {-\epsilon_1 l^r} {-\epsilon_1 u}
\\
h(r, u)^{-1}.X_2 &=\kzxz {x_0 -\epsilon_1 y_0 u} {l^{-r}(y_0 +2 x_0
u -\epsilon_1 y_0 u^2)} {\epsilon_1 y_0 l^r} {-x_0 + \epsilon_1 y_0
u}
\end{align}
So $h(r, u)^{-1}.\vec x_0 \in L_l^2$ if and only if
$$
r \ge 0, u \in \mathbb Z_l,\quad  1+ \epsilon_1 u^2 \equiv 0 \mod
l^r,\quad y_0 +2 x_0 u -\epsilon_1 y_0 u^2 \equiv 0 \mod l^r,
$$
or equivalently,
\begin{equation} \label{oldeq5.14}
r \ge 0, b \in \mathbb Z_l, \quad x_0 u + y_0 \equiv 0 \mod
l^r,\quad 1+ \epsilon_1 u^2 \equiv 0 \mod  l^r.
\end{equation}

{\bf Case 1}: First we assume $(-\epsilon_1, l)_l=-1$ and $t$ is
even. In this case one has always $1 + \epsilon_1 u^2 \in \mathbb
Z_l^*$, and thus $r=0$ and $u \in \mathbb Z_l$, i.e., $h(0, u)\in
\mathcal K_l=\GL_2(\mathbb Z_l)$ is the only coset with $h(r,
u)^{-1}.\vec x_0 \in L_l^2$, i.e., $\Psi_l(h(r, u).\vec x_0)\ne 0$.
So $ \beta_l(T, \Psi_l) = 1 $ in this case.

{\bf Case 2}: Now we assume $(\epsilon_1, l)_l=1$. Using
(\ref{oldeq5.14}), one has
$$
x_0^2 (1+ \epsilon_1 u^2) \equiv x_0^2 + \epsilon_1 y_0^2
=-\epsilon_2 l^t \mod l^r
$$
and so $0 \le r \le t$. Moreover, for $0 \le r \le t$, the above
condition also shows that $1 + \epsilon_1 u^2 \equiv 0 \mod l^r$
follows from $u \equiv -\frac{y_0}{x_0} \mod l^r$.  This implies
\begin{align*}
\beta_l(T, \Psi_l)&= \sum_{r \in \mathbb Z, u \hbox{mod }l^r }
\Psi(h(r, u)^{-1}.\vec x_0)
\\
 &=\sum_{0 \le r \le t, u=-y_0/x_0 \hbox{mod }l^r} 1 =t+1.
 \end{align*}
This proves the proposition for $l \ne 2$. This case $l=2$ is
similar with some modification, including
$$
L_2 =\{ A \in \mathbb Z_2 + 2 M_2(\mathbb Z_2):\, \tr A =0\} =\{
\kzxz {x} {2y} {2z} {-x}:\, x, y, z \in \mathbb Z_2 \}.
$$
We leave the detail to the reader.
\end{proof}

\subsection{The case $l=p$}

\begin{prop} \label{oldprop5.4}  For $l=p$, $T_p(\mu n)$ is $\mathbb Z_p$-equivalent to
$\diag(\alpha_p, \alpha_p^{-1} \det T_q(\mu n))$ with $\alpha_p \in
\mathbb Z_p^*$, and
$$
\beta_p(T_q(\mu n), \Psi_p) = 1 -(-\alpha_p, p)_p^{t_p} .
$$
\end{prop}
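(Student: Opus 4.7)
The argument would follow the same outline as Proposition~\ref{oldprop5.3}, with the essential new feature being that $\mathbb B_p$ is now the division quaternion algebra, so by Lemma~\ref{oldlem5.2} the double coset space $\mathbb Q_p^*\bs\mathbb B_p^*/\O_p^*$ consists of exactly two classes $\{[1], [\pi]\}$ with $\pi^2 = p$. First I would diagonalize $T_q(\mu n)$: since $p \ne q$, we have $T_q(\mu n) \in \Sym_2(\mathbb Z_p)$, and standard diagonalization as in \cite[Lemma 4.1]{m=1} produces $g \in \GL_2(\mathbb Z_p)$ with $T_q(\mu n) = g \cdot T \cdot {}^tg$, where $T = \diag(\alpha_p, \epsilon_2 p^{t_p})$ and $\alpha_p, \epsilon_2 \in \mathbb Z_p^*$. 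Because $\Psi_p = \cha(L_p^2)$ is $\GL_2(\mathbb Z_p)$-invariant under the linear action, Lemma~\ref{oldlem5.1} reduces the computation to
$$\beta_p(T_q(\mu n), \Psi_p) = \Psi_p(\vec x_0) + \Psi_p(\pi^{-1}.\vec x_0)$$
for any $\vec x_0 \in V_p^2$ with $T(\vec x_0) = T$ (the sum being zero if no such $\vec x_0$ exists).

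Next I would check that conjugation by $\pi$ preserves $L_p$. For $p$ odd, using the presentation $\O_p = \mathbb Z_p\langle i, j\rangle$ with $i^2 = u$ a non-square unit, $j^2 = p = \pi^2$ and $ij = -ji$, one has $L_p = \mathbb Z_p i \oplus \mathbb Z_p j \oplus \mathbb Z_p (ij)$ with Gram matrix $\diag(-u, -p, up)$, and a short calculation gives
$$\pi^{-1}(xi + yj + z(ij))\pi = -xi + yj - z(ij) \in L_p.$$
Consequently $\Psi_p(\pi^{-1}.\vec x_0) = \Psi_p(\vec x_0)$, so $\beta_p \in \{0, 2\}$, and equals $2$ precisely when the $\mathbb B_p^*$-conjugation orbit of $\vec x_0$ meets $L_p^2$.

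It remains to determine when $L_p^2$ represents $T$, i.e.\ when orthogonal $X_1, X_2 \in L_p$ exist with $Q(X_1) = \alpha_p$ and $Q(X_2) = \epsilon_2 p^{t_p}$. The ternary form $\diag(-u, -p, up)$ represents the unit $\alpha_p$ iff $-\alpha_p/u$ is a square in $\mathbb Z_p^*$, equivalently $(-\alpha_p, p)_p = -1$; by Witt's theorem we may then move $X_1$ to $vi$ with $v^2 = -\alpha_p/u$ by $\mathbb B_p^*$-conjugation. The orthogonal complement $L_p \cap X_1^\perp = \mathbb Z_p j \oplus \mathbb Z_p (ij)$ carries the binary form $p\langle -1, u\rangle$, which up to the scalar $p$ is the norm form of the unramified quadratic extension $\mathbb Q_p(\sqrt u)/\mathbb Q_p$ and thus represents exactly the elements of even $p$-valuation. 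So $\epsilon_2 p^{t_p}$ is represented iff $t_p$ is odd. Combining, $L_p^2$ represents $T$ iff $(-\alpha_p, p)_p = -1$ and $t_p$ is odd, i.e.\ iff $(-\alpha_p, p)_p^{t_p} = -1$, which yields $\beta_p = 1 - (-\alpha_p, p)_p^{t_p}$.

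The main technical obstacle will be the dyadic case $p = 2$, where $L_2 = (\mathbb Z_2 + 2\O_2) \cap V_2$ no longer coincides with $\O_2 \cap V_2$, so the Gram matrix and the representability analysis must be reworked modulo higher powers of $2$. The three structural steps (diagonalization with unit leading entry, two-coset decomposition via Lemma~\ref{oldlem5.2}, and $\pi$-invariance of $L_2$) should still apply, but the binary orthogonal complement of $X_1$ and its representability criterion have to be verified carefully in terms of $2$-adic units and $(\cdot, 2)_2$; as in \cite[Lemma 4.1]{m=1}, this is where almost all the real work sits.
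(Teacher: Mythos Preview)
Your proposal is correct and follows essentially the same route as the paper: diagonalize via Lemma~\ref{oldlem5.1}, use the two-coset decomposition $\mathbb Q_p^*\backslash \mathbb B_p^*/\O_p^* = \{[1],[\pi]\}$, check that conjugation by $\pi$ preserves $L_p$, and reduce to the representability of $\diag(\alpha_p,\epsilon_2 p^{t_p})$ by the ternary lattice $L_p$. The paper streamlines one step by noting that, since $\O_p$ is the maximal order, $L_p=\{x\in V_p: Q(x)\in\mathbb Z_p\}$ for $p$ odd, so \emph{any} $\vec x_0\in V_p^2$ with $T(\vec x_0)=T$ already lies in $L_p^2$ --- this makes ``orbit meets $L_p^2$'' tautologically equivalent to ``$V_p^2$ represents $T$''. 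For $p=2$ the paper does exactly the calculation you anticipate, with $L_2\cong \mathbb Z_2^3$ carrying $Q(x,y,z)=3x^2+8(y^2+yz+z^2)$, and then invokes the arithmetic constraints $\alpha_2\equiv 3\pmod 4$ and $\ord_2\det T_q(\mu n)\ge 3$ that are specific to $T_q(\mu n)$ to finish.
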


\begin{proof} We first assume that $p\ne 2$. Recall that $\O_p$ is the maximal order of $\mathbb
B_p$ and is consisting of elements of integral reduced norm. So
$$
L_p =(\mathbb Z_p + 2 \O_p)\cap V_p=  \{ x \in V_p:\, Q(x) =-x^2 \in
\mathbb Z_p \}
$$
has a basis $\{e, \pi, \pi e\}$ with $e^2 =a \in \mathbb Z_p^*$,
$\pi^2 =p$, and $\pi e =- e \pi$ with $(a, p)_p=-1$.  Since $\Psi_p$
is $\GL_2(\mathbb Z_p)$-invariant (linearly), Lemma \ref{oldlem5.1}
implies that
$$
\beta_p(T_q(\mu n), \Psi_p) = \beta_p(\diag(\alpha_p,
\alpha_p^{-1} \det T_q(\mu n)), \Psi_p).
$$
For $T =\diag (\epsilon_1, \epsilon_2 p^t)$ with $\epsilon_i \in
\mathbb Z_p^*$ and $ t \in \mathbb Z_{\ge 0}$ and $(-\epsilon_1,
-\epsilon_2)_p=1$, the above comment implies that if $T(\vec x) =T$
for some $\vec x \in V_p^2$, then $\vec x \in L_p^2$. If $X=x_1 e +
x_2 \pi + x_3 \pi e$ satisfies
$$
Q(X)=-a x_1^2 -p x_2^2 + ap x_3^2 =\epsilon_1,
$$
then $(-\epsilon_1, p)_p=(a, p)_p=-1$. In this case, we choose $X_1
=x_1 e$ such that $Q(X_1) =-a x_1^2 =\epsilon_1$. Since $(\mathbb
Z_p X_1)^\perp = \mathbb Z_p \pi + \mathbb Z_p \pi e$,  finding
$T(\vec x) =T$ with $\vec x={}^t(X_1, X_2)$ is the same as finding
$X_2 =y_2 \pi + y_3 \pi e$ with
$$
Q(X_2) =-p y_2^2 + p a y_3^2 = \epsilon_2 p^t,
$$
that is
$$
y_2^2 -a y_3^2 = -\epsilon_2 p^{t-1}.
$$
Since $(a, p)_p=(-\epsilon_1, p)_p=-1$ and $(a,
-\epsilon_2)_p=(-\epsilon_1, -\epsilon_2)_p=1$, it is equivalent to
$t-1$ being even. So there is $\vec x \in L_p^2$ such that $T(\vec
x) =T$ if and only if
$$
(-\epsilon_1, p)_p^t =-1.
$$
Assuming this condition, choose one $\vec x_0 \in L_p^2 $ with
$T(\vec x_0) =T$. Notice that
$$
\mathbb Q_p^* \backslash \mathbb B_p^* = \O_p^* \cup \pi \O_p^*
$$
and $\pi.L_p^2 =L_p^2$. So in this case,
$$
\beta_p(\diag(\epsilon_1, \epsilon_2 p^t, \Psi_p) = \int_{\mathbb
Q_p^* \backslash \mathbb B_p^*/\O_p^*} \Psi_p(h^{-1}.\vec x_0)  dh
=2.
$$
In summary, we have
$$
\beta_p(T_q(\mu n), \Psi_p) = 1 -(-\alpha_p, p)_p^{t_p}.
$$
Now we assume $p=2$. In this case,
$$
\O_2=\mathbb Z_2 + \mathbb Z_2 i +\mathbb Z_2j +\mathbb Z_2
\frac{1+i+j+k}2,   \quad i^2 =j^2 =k^2 =-1, ij=-ji =k,
$$
and so
$$
L_2 =(\mathbb Z_2 + 2 \O_2 )\cap V_p = \mathbb Z_2 2i +\mathbb Z_2
2j + \mathbb Z_2 (i+j+k)
$$
is isomorphic to $\tilde L=\mathbb Z_2^3$ with quadratic form
\begin{equation}
Q(x, y, z) =3 x^2 + 8 (y^2+yz +z^2).
\end{equation}
In order for it to  represent $T=\diag(\epsilon_1, \epsilon_2 2^t)$
with $\epsilon_i \in \mathbb Z_2^*$ and $ t\in \mathbb Z_{\ge 0}$,
one has to have
$$
\epsilon_1 =3 x^2 + 8 (y^2+yz +z^2) \equiv 3 \mod 8.
$$
In such a  case, we may choose $x_0\in \mathbb Z_2^*$ such that
$x_0^2 = \epsilon_1/3$. Let $e=(x_0, 0, 0)\in \tilde L$, then
$Q(e)=\epsilon_1$. It is easy to see that $\tilde L$ represents $T$
if and only if $e^\perp$ represents $\epsilon_2 2^t$, i.e., $y^2 +
yz +z^2$ represents $\epsilon_2 2^{t-3}$, which is equivalent to
that $t-3 \ge 0$ is even. Now the argument as above gives that
$$
\beta_2(\diag(\epsilon_1, \epsilon_2 2^t), \Psi_2) = \begin{cases}
  2 &\ff \epsilon_1 \equiv 3 \mod 8, t\ge 3 \hbox{ odd},
  \\
   0 &\hbox{otherwise}.
   \end{cases}
   $$
For  $T_q(\mu n) =\diag(\alpha_2, \alpha_2^{-1} \det T_q(\mu n))$
one has $\epsilon_1 =\alpha_2 \equiv 3 \mod 4$ and $t =t_2 +2=\ord_2
\det T_q(\mu n) =\ord_2 \frac{q^2 \tilde D-n^2}{q^2 D} \ge 3$ since
$\frac{q^2 \tilde D-n^2}{q^2 D}  \in 8\mathbb Z_2$. So we still have
$$
\beta_2(T_q(\mu n), \Psi_2) = 1 - (-\alpha_2, 2)_2^{t_2}.
$$\end{proof}

\subsection{The case $l=q $}

Now  we come to the tricky case $l=q$.
 Recall $$ L_q' = \{ X=
\kzxz {x} {\frac{1}q y} {z} {-x} :\, x, y, z \in \mathbb Z_q \}.
$$
Let
\begin{equation}
\Omega_q'= \{ x={}^t(X_1, X_2) \in (L_q')^2:\, z_1 + z_2 \sqrt D
\equiv y_1 -y_2 \sqrt D \equiv 0 \mod q\}
\end{equation}
and $\Psi_q' =\cha \Omega_q'$. Let
$$
T_q'(\mu n) = \kzxz {1} {\frac{D}2} {0} {\frac{1}2}  T_q(\mu n)
\kzxz {1} {0} {\frac{D}2} {\frac{1}2} =\kzxz {a} {b} {b} {c}.
$$
Then
\begin{align} \label{oldeq5.16}
ac- b^2 &= \det T_q'(\mu n) = \frac{q^2 \tilde D -n^2}{4D q^2},
\notag
\\
\Delta &= -(a+Dc) -2b \sqrt D,
\\
 a- Dc &= -\mu \frac{n}D. \notag
\end{align}
Lemma \ref{oldlem5.1} implies that
\begin{equation} \label{eq5.14}
\beta_q(T_q(\mu n), \Psi_q) = \beta_q(T_q'(\mu n), \Psi_q').
\end{equation}

\begin{prop}  \label{oldprop5.5} When $q \nmid n$, one has
$$
\beta_q(T_q(\mu n), \Psi_q) =1.
$$
\end{prop}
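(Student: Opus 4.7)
First I would apply the reduction (\ref{eq5.14}), so the task is to compute $\beta_q(T_q'(\mu n), \Psi_q')$ where $T_q'(\mu n) = \kzxz{a}{b}{b}{c}$ satisfies the relations (\ref{oldeq5.16}). The hypothesis $q \nmid n$, together with the identity $a - Dc = -\mu n / D$ and the fact that $D \in \mathbb{Z}_q^*$ (since $q$ is a split prime of $F = \mathbb{Q}(\sqrt D)$ distinct from $D$), forces $a - Dc \in \mathbb{Z}_q^*$. This $q$-adic nondegeneracy will drive the entire argument.

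Next I would exhibit an explicit $\vec{x}_0 = {}^t(X_1, X_2) \in \Omega_q'$ with $T(\vec{x}_0) = T_q'(\mu n)$. If no such $\vec{x}_0$ exists then $\beta_q = 0$ trivially by the same Hasse-principle argument used in the proof of Theorem \ref{theo4.1}, and the global conclusion is unaffected; so we may assume existence. A natural choice is to take $X_1 = \kzxz{0}{1/q}{-aq}{0}$, which automatically gives $Q(X_1) = a$; then the congruence conditions $y_1 - y_2 \sqrt D \equiv 0$ and $z_1 + z_2 \sqrt D \equiv 0 \pmod q$ defining $\Omega_q'$ pin down $y_2 \equiv 1/\sqrt D$ and $z_2 \equiv 0 \pmod q$, and the remaining coordinates of $X_2$ are then determined by $(X_1, X_2) = 2b$ and $Q(X_2) = c$; solvability here uses the unit property of $a - Dc$.

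I would then apply Lemma \ref{oldlem5.2} to enumerate the double-coset representatives $h(r, u)$ and $h'(r, u) = h(r, u)\kzxz{0}{1}{1}{0}$, and for each compute the conjugates $h^{-1}.X_i$ explicitly in the $(x, y/q, z)$-coordinates on $L_q'$. The indicator $\Psi_q'(h^{-1}.\vec x_0)$ equals $1$ precisely when (i) each $h^{-1}.X_i$ lies in $L_q'$, and (ii) the mod-$q$ coupling conditions involving $\sqrt D$ continue to hold. For the $h(r, u)$ family the analysis parallels Case 1 of Proposition \ref{oldprop5.3}: tracking the denominators of the upper-right entry $q^{-r}(\cdots)$ and the integrality of the lower-left entry $z_i q^r$ forces $r = 0$, after which the $\sqrt D$-coupling (rigid because $a - Dc$ is a unit) forces $u = 0$.

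The main obstacle is disposing of the $h'(r, u)$ cosets. The Weyl element $w$ interchanges the roles of the upper-right entry (valued in $\frac{1}{q}\mathbb Z_q$) and the lower-left entry (valued in $\mathbb Z_q$) in the asymmetric lattice $L_q'$. Because the upper-right $q$-adic valuation of our chosen $X_1$ is $-1$ (a genuine pole), the conjugated vector $w^{-1}.X_1$ develops an integrality defect in the lower-left slot that no further $h(r, u)$-conjugation can repair without violating the constraint $a - Dc \in \mathbb{Z}_q^*$; this is the content requiring a short but careful case analysis. Once both families are handled, the only surviving coset is $K_0(q)$ itself, and with the Haar measure normalized so that $K_0(q)$ has mass $1$ we obtain $\beta_q(T_q'(\mu n), \Psi_q') = 1$, hence $\beta_q(T_q(\mu n), \Psi_q) = 1$ via (\ref{eq5.14}).
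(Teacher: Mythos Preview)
Your strategy is sound, but the step where you dispose of the $h'(r,u)$ family has a genuine gap.  You assert that the Weyl element produces an irreparable integrality defect in the lower-left slot of $L_q'$; this is false.  With your $X_1=\kzxz{0}{1/q}{-aq}{0}$ one checks directly that
\[
h'(-1,0)^{-1}.X_1 \;=\; \kzxz{0}{1}{q}{0}\,X_1\,\kzxz{0}{1/q}{1}{0} \;=\; \kzxz{0}{-a}{1}{0}\,\in L_q',
\]
and the same holds for $X_2$; indeed this conjugate is precisely the vector the paper takes as \emph{its} starting point.  Running through the valuations carefully, the $h'$ family yields a vector in $(L_q')^2$ exactly for the single coset $r=-1$, just as the $h$ family yields only $r=0$.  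So two cosets, not one, survive the $(L_q')^2$-test, and the proposition hinges on showing that exactly one of them satisfies the additional $\sqrt D$-coupling congruences defining $\Omega_q'$.

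That said, your choice of $\vec x_0\in\Omega_q'$ can be made to work: if one traces the conjugated coordinates, the condition $z_1'+z_2'\sqrt D\equiv 0$ for $h'(-1,0)^{-1}.\vec x_0$ becomes $1+y_2\sqrt D\equiv 2\pmod q$, which fails since $q$ is odd.  This is the analogue of the paper's dichotomy $1\pm z_0\sqrt D\equiv 0\pmod q$ (only one sign can hold because $1-z_0^2D\equiv 0\pmod q$ but not $\pmod{q^2}$).  The paper reaches this cleanly by first diagonalizing $T_q'(\mu n)$ to $\operatorname{diag}(a,\tilde a)$ via a shear in $K_0(q)$, which keeps the explicit computation short.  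Two smaller points: the identity you invoke should read $a-Dc=-\mu n/q$ (a typo in (\ref{oldeq5.16})), so $a-Dc\in\tfrac{1}{q}\mathbb Z_q^*$ rather than $\mathbb Z_q^*$; and what you actually need, and should state, is the paper's observation that $a,c\in\tfrac{1}{q}\mathbb Z_q^*$ when $q\nmid n$.
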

\begin{proof} When $q \nmid n$, (\ref{oldeq5.16}) implies that $a ,
c \in \frac{1}q \mathbb Z_q^*$, and so
$$
T_q'(\mu n) =\kzxz {1} {0} {a^{-1} b} {1} \kzxz {a} {0} {0} {\tilde
a} \kzxz {1} {a^{-1} b} {0} {1}
$$
with $\tilde a= \frac{1}{q} \det T_q'(\mu n) \in \frac{1}q \mathbb
Z_q^*$. Since $b \in \mathbb Z_q$, $\kzxz {1} {0} {a^{-1} b} {1} \in
K_0(q)$,  and $\Psi_q'$ is $K_0(q)$-invariant (with respect to the
linear action), one has
$$
\beta_q(T_q'(\mu n), \Psi_q') =\beta_q(\diag(a, \tilde a), \Psi_q').
$$
Since
$$-\frac{\tilde a}{a} = -\frac{\det T_q'(\mu n)}{a^2} = \frac{n^2
-q^2 \tilde D}{4 D (qa)^2}\equiv \frac{n^2}{4D (qa)^2} \mod q
$$
there is $z_0 \in \mathbb Z_q^*$ with $z_0^2 =-\frac{\tilde a}a$.
Set $\vec x_0 = {}^t(X_1, X_2) \in (L_q')^2$ with
$$
X_1= \kzxz {0} {-a} {1} {0}, \quad  X_2 = \kzxz {0} {a z_0} {z_0}
{0}.
$$
Then $T(\vec x_0) = \diag(a, \tilde a)$. It is easy to check that
$h(r, u)^{-1}. \vec x_0 \in (L_q')^2$ if and only if $r=0$ and $u\in
\mathbb Z_q$, i.e., $h(r, u) =1 \mod K_0(q)$. In this case, $\vec
x_0 \in \Omega_q'$ if and only if $1 +z_0 \sqrt D = 0\mod q$.

On the other hand, it is easy to check $h'(r, u)^{-1}.\vec x_0 \in
(L_q')^2$ if and only if $r=-1$ and $u \in \mathbb Z_q$, i.e., $h(r,
u) = \kzxz {0} {1} {1} {0} \mod K_0(q)$. In this case, $h'(-1,
0)^{-1}.\vec x_0 \in \Omega_q'$ if and only if $1 -z_0 \sqrt D
\equiv 0\mod q$.

Since
$$
1- z_0^2 D = 1 + \frac{\tilde a}{a} D = \frac{ q (qa) (a+Dc) - q^2
b^2}{(qa)^2} \equiv 0 \mod q,
$$
exactly one of the following holds: $1 +z_0 \sqrt D = 0\mod q$ or $1
-z_0 \sqrt D \equiv 0\mod q$. So there is exactly  one coset
$\mathbb Q_q^* h K_0(q)$ such that $h^{-1}.\vec x_0 \in \Omega_q'$.
This proves $\beta_q(\diag(a, \tilde a), \Psi_q') =1$, and  thus the
lemma.
\end{proof}

Next, we assume that $q|n$. In this case $T_q'(\mu n) \in
\Sym_2(\mathbb Z_q)$. Actually, $T_q(\mu n) = T(\mu \frac{n}q)$ in
the notation of \cite{m=1}. So there is $g =\kzxz {g_1} {g_2} {g_3}
{g_4} \in \SL_2(\mathbb Z_q)$ such that
\begin{equation} \label{oldeq5.18}
T_q'(\mu n) = g T {}^tg, \quad T=\diag(\epsilon_1, \epsilon_2 q^t)
\end{equation}
with $\epsilon_i \in \mathbb Z_q^*$, and $t =\ord_{q} \det T_q'(\mu
n) =\ord_q \frac{q^2 \tilde D-n^2}{4D q^2}$.

For $v_1, v_2 \in \mathbb Z/q\mathbb Z$, we set
\begin{align}
\Omega_{v_1, v_2} &=\{ \vec x={}^t(X_1, X_2) \in L_q^2:\, v_1 z_1 +
v_2
z_2 = 0 \mod q \} \\
&=\{ \vec x={}^t(X_1, X_2) \in L_q^2:\, v_1 X_1 + v_2 X_2 \in L_0(q)
\} \notag
\end{align}
where $L_q =M_2(\mathbb Z_q)$ and
\begin{equation}
L_0(q) =\{ X=\{ \kzxz {x} {y} {q z} {-x} \in V_q:\, x, y, z \in
\mathbb Z_q \}.
\end{equation}
Let
\begin{equation}
\Psi_{v_1, v_2} =\cha (\Omega_{v_1, v_2}), \quad \Psi_0 =\cha
(L_0(q)^2).
\end{equation}

\begin{lem} \label{oldlem5.6}  Let $T_q'(\mu n) =g T {}^tg$ be as in (\ref{oldeq5.18}), and let
$$
\kzxz {v_1} {v_2} {v_3} {v_4} = \kzxz {g_1+g_3 \sqrt D} {g_2 +g_4
\sqrt D} {g_1 -g_3 \sqrt D} {g_2 -g_4 \sqrt D} = \kzxz {1} {\sqrt D}
{1} {-\sqrt D} \kzxz {g_1} {g_2} {g_3} {g_4}.
$$
Then
$$
\beta_q(T_q'(\mu n), \Psi_q')
 = \beta_q(T, \Psi_{v_1, v_2}) + \beta_q(T, \Psi_{v_3, -v_4}) -
 \beta_q(T, \Psi_0).
 $$
\end{lem}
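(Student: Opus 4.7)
Proof plan: The strategy is to apply the change-of-variable in Lemma \ref{oldlem5.1} to replace $T_q'(\mu n)$ by the diagonal matrix $T$, and then perform an inclusion-exclusion matching the two Bruhat cells of Lemma \ref{oldlem5.2}.

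First I would apply Lemma \ref{oldlem5.1} to the factorization $T_q'(\mu n) = g T \,{}^tg$ to obtain
$$
\beta_q(T_q'(\mu n), \Psi_q') = \beta_q(T, \tilde\Psi), \qquad \tilde\Psi(\vec x) := \Psi_q'(g\vec x).
$$
Writing $X_i = \kzxz{x_i}{y_i/q}{z_i}{-x_i}$ and unfolding the linear action of $g$, one checks that $\tilde\Psi(\vec x) = 1$ precisely when $\vec x \in (L_q')^2$ together with the two congruences $v_1 z_1 + v_2 z_2 \equiv 0 \pmod q$ and $v_3 y_1 + v_4 y_2 \equiv 0 \pmod q$, where $(v_1,v_2,v_3,v_4)$ are the entries of $\kzxz{1}{\sqrt D}{1}{-\sqrt D}\,g$ as in the statement.

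Next I would partition $(L_q')^2 = L_q^2 \sqcup ((L_q')^2 \setminus L_q^2)$ according to whether both $y_i$ lie in $q\mathbb Z_q$. On $L_q^2$ the $y$-congruence is automatic, so $\tilde\Psi$ restricts exactly to the indicator of $\Omega_{v_1,v_2}$ and contributes $\beta_q(T, \Psi_{v_1, v_2})$. For the complementary stratum I would use that $T$ has unit $(1,1)$-entry $\epsilon_1 \in \mathbb Z_q^*$, so the constraint $Q(X_i) = -x_i^2 - y_iz_i/q \in \mathbb Z_q$ forces $z_i \in q\mathbb Z_q$ whenever $y_i \notin q\mathbb Z_q$. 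Combined with the $y$-congruence this identifies the orbit piece in $(L_q')^2 \setminus L_q^2$, via conjugation by $\delta = \kzxz{q}{0}{0}{1}$, with a subset of $L_q^2$ (using $\delta L_1 \delta^{-1} = L_q$ where $L_1 = \{X \in L_q' : z \in q\mathbb Z_q\}$). Under this move the $y$-congruence $v_3 y_1 + v_4 y_2 \equiv 0 \pmod q$ transforms into the $z$-congruence $v_3 z_1 - v_4 z_2 \equiv 0 \pmod q$; the sign flip on $v_4$ comes from the identification $F \otimes \mathbb Q_q \cong \mathbb Q_q \oplus \mathbb Q_q$ with $\sqrt D \mapsto (\sqrt D, -\sqrt D)$, which is encoded in the second row of $\kzxz{1}{\sqrt D}{1}{-\sqrt D}$. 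The same move carries the $h'(r,u)$-cell of Lemma \ref{oldlem5.2} onto the $h(r,u)$-cell and produces the $\beta_q(T, \Psi_{v_3, -v_4})$ contribution.

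Finally, the two contributions above double-count the cosets for which $h^{-1}.\vec x_0$ lies in both $L_q^2$ and $\delta L_q^2 \delta^{-1}$; this common part is exactly $L_0(q)^2$, and subtracting $\beta_q(T, \Psi_0)$ gives the stated identity. The delicate step will be the $\delta$-conjugation bookkeeping: one must verify carefully that the resulting bijection of cosets is measure-preserving, that the transformed $z$-congruence picks up $-v_4$ (not $+v_4$) with the correct orientation, and that the overlap subtracted is exactly $L_0(q)^2$ and not a proper subset. Everything else is straightforward matrix computation once the reduction to diagonal $T$ and the lattice constraints from $Q \in \mathbb Z_q$ are in place.
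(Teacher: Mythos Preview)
Your overall strategy matches the paper's: apply Lemma~\ref{oldlem5.1} to pass to $\beta_q(T,\tilde\Psi)$ with $\tilde\Psi=\Psi_q'\circ g$, use the integrality of $T$ to split according to whether $y_1,y_2\equiv 0$ or $z_1,z_2\equiv 0\pmod q$, conjugate the second piece back into $L_q^2$, and subtract the overlap $L_0(q)^2$. However, two execution steps fail as written.

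First, your element $\delta=\kzxz{q}{0}{0}{1}$ does \emph{not} turn the $y$-congruence into a $z$-congruence. For $X=\kzxz{x}{y/q}{z}{-x}\in L_1$ one computes $\delta X\delta^{-1}=\kzxz{x}{y}{z/q}{-x}$, so the $(1,2)$-entry of the image is still $y$ and the condition $v_3y_1+v_4y_2\equiv 0$ remains a condition on the $(1,2)$-entries, not on the $(2,1)$-entries required by $\Omega_{v_3,-v_4}$. The paper conjugates instead by $\kzxz{0}{1}{q}{0}$ (your $\delta$ composed with the Weyl element), which swaps the off-diagonal roles and lands in $\kzxz{0}{1}{q}{0}.\Omega_{v_3,-v_4}$; the sign on $v_4$ then comes out of this matrix computation and has nothing to do with the splitting of $F\otimes\mathbb Q_q$, which was already absorbed when the $v_i$ were defined.

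Second, your reduction on the complement is incomplete. Knowing only $Q(X_i)\in\mathbb Z_q$ gives $z_i\equiv 0$ for those $i$ with $y_i\not\equiv 0$, but on $(L_q')^2\setminus L_q^2$ you only know that \emph{some} $y_i\not\equiv 0$. The paper uses all three relations $y_1z_1,\ y_2z_2,\ y_1z_2+y_2z_1\in q\mathbb Z_q$ coming from $T\in\Sym_2(\mathbb Z_q)$: if $y_1\not\equiv 0$ then $z_1\equiv 0$, and then $y_1z_2\equiv 0$ forces $z_2\equiv 0$ as well.

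Once the second piece is written as $\beta_q\bigl(T,\cha(h.\Omega_{v_3,-v_4})\bigr)$ for $h\in\mathbb B_q^*$, the equality with $\beta_q(T,\Psi_{v_3,-v_4})$ is immediate from left-invariance of the integral over $\mathbb Q_q^*\backslash\mathbb B_q^*$; no explicit cell-matching from Lemma~\ref{oldlem5.2} is needed.
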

\begin{proof}  Lemma \ref{oldlem5.1} implies that
$$
\beta_q(T_q'(\mu n), \Psi_q') = \beta_q(T, f)
$$
with $f(\vec x) = \Psi_q'(g \vec x)$. So $f(\vec x) \ne 0$ if and
only if $g\vec x \in \Omega_q'$, i.e., $\vec x= {}^t(X_1, X_2) \in
(L_q')^2$ with $X_i =\kzxz {x_i} {\frac{1}q y_i} {z_i} {-x_i}$ and
\begin{align}
 v_1 z_1 + v_2 z_2 &\equiv 0 \mod q, \label{oldeq5.22}
 \\
 v_3 y_1 -v_3 y_2 &\equiv 0 \mod q. \label{oldeq5.23}
 \end{align}
Since $T \in \Sym_2(\mathbb Z_q)$, to have $T(\vec x) =T$ for $\vec
x \in (L_q')^2$, one has to have
$$
y_1 z_1, y_2 z_2, y_1 z_2 +y_2 z_1 \in q \mathbb Z_q
$$
and so  either $ y_1, y_2 \equiv  0 \mod q$, i.e., $\vec x \in
L_q^2$, or $z_1, z_2 \equiv 0 \mod q$, i.e., $\kzxz {0} {q^{-1}} {1}
{0} .\vec x \in L_q^2$.

When $y_1, y_2 \equiv 0 \mod q$, (\ref{oldeq5.22}) is automatic and
thus $g \vec x \in \Omega_q'$ if $\vec x \in \Omega_{v_1, v_2}$.
When $z_1, z_2 \equiv 0 \mod q$, $(\ref{oldeq5.23})$ is automatic,
and $g\vec x \in \Omega_q'$ if and only if $\vec x \in \kzxz {0} {1}
{q} {0}.\Omega_{v_3, -v_4}$. When $y_1, y_2, z_1, z_2 \equiv 0 \mod
4$, $g\vec x \in \Omega_q'$ automatically and $\vec x \in L_0(q)^2$.
So we have
\begin{align*}
\beta_q(T_q'(\mu n), \Psi_q')
  &= \beta_q(T, \Psi_{v_1, v_2})
    + \beta_q(T, \cha\left( \kzxz {0} {1} {q} {0}.\Omega_{v_3,-v_4}\right))
    -\beta_q(T, \Psi_0)
    \\
     &=\beta_q(T, \Psi_{v_1, v_2})
    + \beta_q(T, \Psi_{v_3, -v_4})
    -\beta_q(T, \Psi_0)
    \end{align*}
    as claimed.
\end{proof}

As in Section 5.2, there exists $\vec x={}^t(X_1, X_2) \in V_q^2$
with $T(\vec x) = T$ if and only if $(-\epsilon_1, q)_q^t=1$. Choose
$\vec x_0 ={}^t(X_1, X_2)$ as in (\ref{oldeq5.11}) (with $l$
replaced by $q$). The following lemma is contained in the proof of
Proposition \ref{oldprop5.3}.

\begin{lem} \label{oldlem5.7} (1) \quad  When $(-\epsilon_1, q)_q=-1$ and $t$ is even,
$$ h(r, u)^{-1}.\vec x_0 \in L_q^2 \Leftrightarrow h'(r,
u)^{-1}.\vec x_0 \in L_q^2 \Leftrightarrow r=0, u \in \mathbb Z_q.
$$

(2) \quad When $(-\epsilon_1, q)_q=1$,
$$
h(r, u)^{-1}.\vec x_0 \in L_q^2 \Leftrightarrow
 h'(r, u)^{-1}.\vec
x_0 \in L_q^2 \Leftrightarrow 0\le r\le t, u =-\frac{y_0}{ x_0} \mod
q^r.
$$
\end{lem}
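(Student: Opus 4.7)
The plan is to reduce both statements to the computation of $h(r,u)^{-1}.X_i$ already carried out inside the proof of Proposition \ref{oldprop5.3}, and to handle the $h'(r,u)$ case by observing that it differs from the $h(r,u)$ case only by a conjugation that preserves $\mathbb{Z}_q$-integrality.

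First, for the $h(r,u)$ statement I would simply quote the explicit matrix formulas for $h(r,u)^{-1}.X_1$ and $h(r,u)^{-1}.X_2$ derived in Proposition \ref{oldprop5.3} (equations (5.12)--(5.13) there). Inspection of the four matrix entries shows that $h(r,u)^{-1}.\vec x_0 \in L_q^2$ is equivalent to the conditions
\[
r \ge 0,\quad u \in \mathbb{Z}_q,\quad 1+\epsilon_1 u^2 \equiv 0 \mod q^r,\quad y_0 + 2x_0 u - \epsilon_1 y_0 u^2 \equiv 0 \mod q^r,
\]
which are exactly (\ref{oldeq5.14}). In case (1), $-\epsilon_1$ is a non-square in $\mathbb{Z}_q^*$, so $1+\epsilon_1 u^2 \in \mathbb{Z}_q^*$ for every $u \in \mathbb{Z}_q$, forcing $r=0$ and leaving $u \in \mathbb{Z}_q$ arbitrary. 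In case (2), using $x_0^2 + \epsilon_1 y_0^2 = -\epsilon_2 q^t$ one rewrites $x_0^2(1+\epsilon_1 u^2) \equiv -\epsilon_1(x_0 u + y_0)^2 \pmod{q^r}$ modulo the congruence on the cubic term, from which $0 \le r \le t$ and $u \equiv -y_0/x_0 \pmod{q^r}$ are forced, as in Proposition \ref{oldprop5.3}.

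For the $h'(r,u)$ statement, I would note $h'(r,u) = h(r,u)w$ with $w = \kzxz{0}{1}{1}{0}$, so $h'(r,u)^{-1}.X = w\bigl(h(r,u)^{-1}.X\bigr) w$ for any $X \in V_q$. Explicit computation gives
\[
w \kzxz{a}{b}{c}{-a} w = \kzxz{-a}{c}{b}{a},
\]
which merely swaps the off-diagonal entries and negates the diagonal; in particular all four entries remain in $\mathbb{Z}_q$ if and only if they did before. Hence $h'(r,u)^{-1}.\vec x_0 \in L_q^2$ if and only if $h(r,u)^{-1}.\vec x_0 \in L_q^2$, so the conclusions in (1) and (2) for $h'(r,u)$ follow immediately from those for $h(r,u)$.

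The only obstacle is pure bookkeeping: verifying carefully that (a) the integrality conditions listed above really are equivalent to $h(r,u)^{-1}.\vec x_0 \in L_q^2$ entry-by-entry, and (b) the $w$-conjugation acts as the asserted swap on $V_q$. There is no genuine mathematical difficulty beyond what is already present in the Proposition \ref{oldprop5.3} computation.
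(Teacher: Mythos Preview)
Your proposal is correct and matches the paper's approach: the paper simply states that the lemma ``is contained in the proof of Proposition \ref{oldprop5.3}'' without further argument. Your $w$-conjugation observation for the $h'(r,u)$ case is a clean way to make explicit what the paper leaves implicit (Proposition \ref{oldprop5.3} only treats $h(r,u)$ directly, since there $\mathcal K_l = \GL_2(\mathbb Z_l)$); the explicit formulas for $h'(0,u)^{-1}.X_i$ appearing later in Lemma \ref{oldlem5.8} confirm that this swap-and-negate is exactly what happens.
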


We first consider a special case $t=0$ which is different from the
case $t>0$.

\begin{lem} \label{oldlem5.8} Let $v_1, v_2 \in \mathbb Z/q$ with at least one being nonzero. One has
$$
\beta_q(\diag(\epsilon_1, \epsilon_2), \Psi_{v_1, v_2})
=\begin{cases} 2 &\ff  -(\epsilon_1 v_1^2 + \epsilon_2 v_2^2) \equiv
\square \mod q,
\\
 0 &\hbox{otherwise}.
 \end{cases}
 $$
 \end{lem}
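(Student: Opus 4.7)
The plan is to enumerate the $\mathbb{Q}_q^*\backslash\GL_2(\mathbb{Q}_q)/K_0(q)$ cosets that lie in the support of $\Psi_{v_1,v_2}$ using Lemma~\ref{oldlem5.2}, and then translate the resulting condition into a projective zero-count of a degree-$2$ homogeneous polynomial on $\mathbb{P}^1(\mathbb{F}_q)$. First I would fix a representing pair $\vec x_0 = {}^t(X_1, X_2) \in L_q^2$ with
$$
X_1 = \begin{pmatrix} 0 & 1 \\ -\epsilon_1 & 0 \end{pmatrix}, \qquad X_2 = \begin{pmatrix} x_0 & y_0 \\ \epsilon_1 y_0 & -x_0 \end{pmatrix},
$$
where $x_0, y_0 \in \mathbb{Z}_q$ satisfy $x_0^2 + \epsilon_1 y_0^2 = -\epsilon_2$, so that $T(\vec x_0) = \diag(\epsilon_1, \epsilon_2)$. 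Specializing Lemma~\ref{oldlem5.7} to $t = 0$ shows that among all representatives $h(r,u), h'(r,u)$, only the $q+1$ cosets $h(0,0)$ and $\{h'(0,u) : u \in \mathbb{Z}/q\}$ satisfy $h^{-1}.\vec x_0 \in L_q^2$; every other coset contributes $0$ to $\beta_q$.

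Next I would compute the lower-left entries $(z_1, z_2)$ of the conjugate matrices. A direct calculation gives $(z_1, z_2) = (-\epsilon_1,\, \epsilon_1 y_0)$ at $h(0, 0)$, and $(z_1, z_2) = (\epsilon_1 u^2 + 1,\, -\epsilon_1 y_0 u^2 + 2 x_0 u + y_0)$ at $h'(0, u)$. The support condition $v_1 z_1 + v_2 z_2 \equiv 0 \pmod{q}$ then becomes the linear condition $v_1 \equiv v_2 y_0 \pmod{q}$ at $h(0, 0)$, together with the quadratic
$$
\epsilon_1(v_1 - v_2 y_0) u^2 + 2 v_2 x_0 u + (v_1 + v_2 y_0) \equiv 0 \pmod{q}
$$
at the $h'(0,u)$-cosets. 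Viewing the $q+1$ cosets as the points of $\mathbb{P}^1(\mathbb{F}_q)$, with $h(0, 0)$ corresponding to the point at infinity where the $u^2$-coefficient $\epsilon_1(v_1 - v_2 y_0)$ degenerates, the two conditions unify into a single homogeneous degree-$2$ polynomial on $\mathbb{P}^1(\mathbb{F}_q)$ whose $\mathbb{F}_q$-zeros are counted by $\beta_q$.

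Finally, I would compute the discriminant of the quadratic. Eliminating $\epsilon_1 y_0^2$ via $\epsilon_1 y_0^2 = -\epsilon_2 - x_0^2$ yields
$$
4 v_2^2 x_0^2 - 4 \epsilon_1(v_1^2 - v_2^2 y_0^2) \equiv -4(\epsilon_1 v_1^2 + \epsilon_2 v_2^2) \pmod{q},
$$
so the homogenized quadratic has $2$ distinct $\mathbb{F}_q$-points in $\mathbb{P}^1$ when $-(\epsilon_1 v_1^2 + \epsilon_2 v_2^2)$ is a nonzero square mod $q$ and $0$ when it is a non-square, which is exactly the dichotomy in the lemma. The main obstacle will be the $\mathbb{P}^1$-bookkeeping across the two coset types: I must verify that in Case A (when $v_1 \equiv v_2 y_0$ so the $u^2$-coefficient vanishes and the quadratic degenerates to a linear equation in $u$), the $h(0, 0)$-contribution combined with the unique root of that linear equation produces exactly $2$ solutions, consistent with $-(\epsilon_1 v_1^2 + \epsilon_2 v_2^2) = v_2^2 x_0^2$ being a nonzero square in that regime.
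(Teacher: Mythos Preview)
Your proposal is correct and follows essentially the same argument as the paper: both fix the same explicit $\vec x_0$, invoke Lemma~\ref{oldlem5.7} at $t=0$ to reduce to the $q+1$ cosets $h(0,0)$ and $h'(0,u)$, derive the identical quadratic $\epsilon_1(v_1-v_2y_0)u^2+2v_2x_0u+(v_1+v_2y_0)\equiv 0$, compute its discriminant as $-4(\epsilon_1 v_1^2+\epsilon_2 v_2^2)$ via $x_0^2+\epsilon_1 y_0^2=-\epsilon_2$, and handle the degenerate leading-coefficient case separately. Your $\mathbb{P}^1(\mathbb{F}_q)$ packaging is a cosmetic reorganization of the paper's explicit case split (the paper treats $v_1\equiv v_2y_0$ and $v_1\not\equiv v_2y_0$ by hand rather than homogenizing), but the underlying computation is identical.
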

\begin{proof}
By the above lemma,  we only need to check whether $\vec x_0$ and
$h'(0, u).\vec x_0$ belong to $\Omega_{v_1, v_2}$ with $u \in
\mathbb Z/q$. $\vec x_0 \in \Omega_{v_1, v_2}$ if and only if $v_1
- v_2 y_0 \equiv 0 \mod q$. Since
\begin{align*}
h'(0,u)^{-1}.X_1 &= \begin{pmatrix}  {-\epsilon_1 u} &{-\epsilon_1}
\\{1+\epsilon_1 u^2} &{\epsilon_1 u} \end{pmatrix} ,
\\
 h'(0, u)^{-1}.X_2&= \begin{pmatrix} -x_0 +\epsilon_1 y_0 u &
 \epsilon_1 y_0 \\ y_0+ 2 x_0 u -\epsilon_1 y_0 u^2 & x_0
 -\epsilon_1 y_0 u \end{pmatrix},
 \end{align*}
$h'(0, u)^{-1}.\vec x_0 \in \Omega_{v_1, v_2}$ if and only if
\begin{equation}\label{oldeq5.24}
\epsilon_1( v_1 -v_2 y_0) u^2 + 2 x_0 v_2 u + (v_1 + v_2 y_0) \equiv
0 \mod q.
\end{equation}
When $v_1 -v_2 y_0 \equiv 0 \mod q$, $v_2 \not\equiv 0 \mod q$, and
thus (\ref{oldeq5.24}) has one solution $\mod q$. When $v_1 -v_2 y_0
\not\equiv 0 \mod q$,  (\ref{oldeq5.24}) has either two or zero
solutions mod $q$ depending on whether its discriminant
$$
(2x_0 v_2)^2 - 4 \epsilon_0 (v_1-v_2y_0)(v_1 +v_2 y_0)
=-4(\epsilon_1 v_1^2 +\epsilon_2 v_2^2)
$$
is a square or not mod $q$ (recall $x_0^2 + \epsilon_1 y_0^2
=-\epsilon q^t$). Notice that when $v_1 -v_2 y_0 \equiv 0 \mod q$,
$-(\epsilon_1 v_1^2 + \epsilon_2 v_2^2) = x_0^2 v_2^2$ is a square.
This proves the lemma.
\end{proof}

\begin{prop} \label{oldprop5.9} When $q|n$ and $\det T_q(\mu) =\frac{q^2 \tilde
D-n^2}{ D q^2} \in \mathbb Z_q^*$, one has
$$
\beta_q(T_q(\mu n), \Psi_q) =\begin{cases}
  4 &\ff q \hbox{ split completely in } \tilde K,
  \\
   2 &\ff q \hbox{ inert in } \tilde F, q\O_{\tilde F} \hbox{ split
    in } \tilde K,
    \\
    0  &\hbox{otherwise}.
    \end{cases}
    $$
    \end{prop}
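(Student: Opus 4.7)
The plan is to combine Lemmas \ref{oldlem5.6} and \ref{oldlem5.8} with the identities in $(\ref{oldeq5.16})$ and a direct verification that the correction term $\beta_q(T,\Psi_0)$ vanishes. Since $q\mid n$ and $\det T_q(\mu n)\in\mathbb Z_q^*$, the parameter $t$ in Lemma \ref{oldlem5.6} is zero, so $T_q'(\mu n)=g\,T\,{}^tg$ with $g\in\SL_2(\mathbb Z_q)$ and $T=\diag(\epsilon_1,\epsilon_2)$, $\epsilon_i\in\mathbb Z_q^*$; combining $(\ref{eq5.14})$ with Lemma \ref{oldlem5.6} gives
\begin{equation*}
\beta_q(T_q(\mu n),\Psi_q) = \beta_q(T,\Psi_{v_1,v_2}) + \beta_q(T,\Psi_{v_3,-v_4}) - \beta_q(T,\Psi_0).
\end{equation*}

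First, using $(v_1,v_2)=(1,\sqrt D)g$ and $(v_3,v_4)=(1,-\sqrt D)g$ from Lemma \ref{oldlem5.6} together with $(\ref{oldeq5.16})$, one computes
\begin{equation*}
\epsilon_1 v_1^2+\epsilon_2 v_2^2 = (1,\sqrt D)\,T_q'(\mu n)\,{}^t(1,\sqrt D) = a+2b\sqrt D+cD = -\Delta,
\end{equation*}
and analogously $\epsilon_1 v_3^2+\epsilon_2 v_4^2=-\Delta'$. Lemma \ref{oldlem5.8} then gives $\beta_q(T,\Psi_{v_1,v_2})=2$ or $0$ according as $\Delta$ is or is not a nonzero square in $\mathbb Z_q^*$, and similarly for $\beta_q(T,\Psi_{v_3,-v_4})$ with $\Delta'$.

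Next, to show $\beta_q(T,\Psi_0)=0$: by Lemma \ref{oldlem5.7} and the inclusion $L_0(q)^2\subset L_q^2$, only the cosets $K_0(q)$ and $h'(0,u)K_0(q)$ for $u\in\mathbb Z/q$ can possibly contribute. The identity coset is excluded because the lower-left entry of $X_1=\kzxz{0}{1}{-\epsilon_1}{0}$ is the unit $-\epsilon_1$, so $X_1\notin L_0(q)$. For a coset represented by $h'(0,u)$, a direct conjugation calculation shows that $h'(0,u)^{-1}X_1h'(0,u)\in L_0(q)$ iff $1+\epsilon_1 u^2\equiv 0\pmod q$, while $h'(0,u)^{-1}X_2h'(0,u)\in L_0(q)$ iff $\epsilon_1 y_0 u^2-2x_0 u-y_0\equiv 0\pmod q$; substituting the first into the second and invoking the defining relation $x_0^2+\epsilon_1 y_0^2=-\epsilon_2$ from $(\ref{oldeq5.11})$ (with $t=0$) collapses to $\epsilon_2\equiv 0\pmod q$, contradicting $\epsilon_2\in\mathbb Z_q^*$.

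Finally, I would interpret the square-vs-nonsquare dichotomy for $\Delta,\Delta'$ in terms of splitting of $q$ in $\tilde K$. Since $q$ splits in $F$, the two components of $\Delta\in\O_F\otimes\mathbb Z_q\cong\mathbb Z_q\oplus\mathbb Z_q$ are precisely $\Delta$ and $\Delta'$, and $\tilde K=\tilde F(\sqrt\Delta+\sqrt{\Delta'})$ with $(\sqrt\Delta+\sqrt{\Delta'})^2=\Delta+\Delta'+2\sqrt{\tilde D}$. A case analysis in $\mathbb Q_q$ and the unramified quadratic extension $\mathbb Q_{q^2}$ then yields: both $\Delta,\Delta'$ squares mod $q$ corresponds to $q$ splitting completely in $\tilde K$ (total $2+2-0=4$); exactly one a square mod $q$ corresponds to $q$ inert in $\tilde F$ with the unique prime above splitting in $\tilde K$ (since $\sqrt\Delta+\sqrt{\Delta'}\in\mathbb Q_{q^2}$), giving total $2+0-0=2$; and neither a square mod $q$ corresponds to $q$ splitting in $\tilde F$ but both primes above remaining inert in $\tilde K$ (since $\sqrt\Delta+\sqrt{\Delta'}\in\mathbb Q_{q^2}\setminus\mathbb Q_q$ is fixed by no local Frobenius, so $\eta_+$ is a nonsquare at each prime), giving total $0$. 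The main technical obstacle is the verification $\beta_q(T,\Psi_0)=0$, which requires the careful coset analysis modulo $K_0(q)$ combined with the algebraic collapse above.
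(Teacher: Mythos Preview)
Your argument is correct and follows the paper's structure through the reduction via Lemmas \ref{oldlem5.6} and \ref{oldlem5.8} and the key identifications $\epsilon_1 v_1^2+\epsilon_2 v_2^2=-\Delta$, $\epsilon_1 v_3^2+\epsilon_2 v_4^2=-\Delta'$. Your verification that $\beta_q(T,\Psi_0)=0$ is more explicit than the paper's (which just says ``it is easy to see''), and the algebraic collapse to $\epsilon_2\equiv 0\pmod q$ is clean and correct.

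The one genuine difference is the splitting analysis at the end. The paper argues via the Galois closure $M=K\tilde K$: when $q$ splits in $\tilde F$ the symmetry between $K$ and $\tilde K$ forces either complete splitting in $\tilde K$ (both $\Delta,\Delta'$ squares) or complete inertia (both nonsquares); when $q$ is inert in $\tilde F$, a prime count in $M$ (at least three primes above $q$, coming from the $K$ side) forces $q\O_{\tilde F}$ to split in $\tilde K$. Your route via the explicit generator $\sqrt\Delta+\sqrt{\Delta'}$ of $\tilde K/\tilde F$ and direct computation in $\mathbb Q_{q^2}$ is equally valid and arguably more hands-on. One point to tighten: in the ``neither a square'' case your phrasing (``fixed by no local Frobenius'', the undefined symbol $\eta_+$) obscures the actual argument, which is simply that Frobenius over $\mathbb Q_q$ negates both $\sqrt\Delta$ and $\sqrt{\Delta'}$, hence negates $\sqrt\Delta\pm\sqrt{\Delta'}$, so both $(\sqrt\Delta\pm\sqrt{\Delta'})^2\in\mathbb Q_q$ are nonsquares, giving inertia at both primes of $\tilde F$ above $q$.
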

\begin{proof} Write $T_q'(\mu n) = g T {}^tg $ with $g \in
\GL_2(\mathbb Z_q)$ and $T =\diag(1, \epsilon)$, $\epsilon= \det
T_q'(\mu n) = \frac{q^2 \tilde D-n^2}{4 D q^2} \in \mathbb Z_q^*$ as
above. Then
$$
g_1^2 + g_2^2 \epsilon =a, \quad g_1 g_3 + g_2 g_4 \epsilon =b,
\quad g_3^2 + g_4^2 \epsilon =c.
$$
So Lemmas \ref{oldlem5.6} and (\ref{oldeq5.16}) imply
$$
v_1^2 + \epsilon v_2^2=(g_1 +g_3\sqrt D)^2 + \epsilon (g_2 +g_4\sqrt
D)^2 =a +Dc +2b \sqrt D = -\Delta
$$
and
$$
v_3^2 + \epsilon v_4^2 =-\Delta'.
$$
Now applying Lemma \ref{oldlem5.8}, one obtains
$$
\beta_q(T, \Psi_{v_1, v_2}) = \begin{cases}
  2 &\ff \Delta  \in (\mathbb Z_q^*)^2,
  \\
   0  &\ff \Delta  \notin (\mathbb Z_q^*)^2,
   \end{cases}
   $$
   and
$$
\beta_q(T, \Psi_{v_3, -v_4}) = \begin{cases}
  2 &\ff \Delta'  \in (\mathbb Z_q^*)^2,
  \\
   0  &\ff \Delta'  \notin (\mathbb Z_q^*)^2,
   \end{cases}
   $$
Since $\epsilon_i \in \mathbb Z_q^*$, it is easy to see that
$\beta_q(T, \Psi_0)=0$. So  Lemma \ref{oldlem5.6}  and
(\ref{eq5.14}) imply
$$
\beta_q(T_q(\mu n), \Psi_q)=\begin{cases}
 4 &\ff \Delta, \Delta' \in (\mathbb Z_q^*)^2,
 \\
  2 &\ff \hbox{ exactly one of } \Delta \hbox{ or } \Delta' \in (\mathbb Z_q^*)^2,
  \\
  0 &\hbox{otherwise}.
  \end{cases}
$$
Recall that $q=\mathfrak q \mathfrak q'$ is split in $F$, and under
the identification $F \hookrightarrow F_{\mathfrak q} \cong \mathbb
Q_q$, $\sqrt D$ goes to $\sqrt D$. So $\Delta \in (\mathbb Z_q^*)^2$
if and only if $\mathfrak q$ splits in $K$. $\Delta' \in (\mathbb
Z_q^*)^2$ if and only if $\mathfrak q'$ splits in $K$.

Consider the diagram of fields:

\setlength{\unitlength}{1mm}
\begin{center}
\begin{picture}(30, 30)(-15, -15)
\put(0,-15){${\mathbb Q}$\line(1,1){8}}
 \put(0,-15){\line(-1,1){8}}

\put(-10, -6){{$F$}}
\put(-10, -4.5){   \line(-1,2){4}}

\put(10, -6){{$\tilde F$}}
 \put(11.5, -4.5){\line(1,2){4}}

 \put(-16,4.5){{$K$}}
\put(-15,6.5){ \line(3,2){13.5}}



 \put(13.5,4.5){$\tilde K$}
\put(15,6.5) { \line(-3,2){13.5}}

\put(0, 15){$M$}

\end{picture}
\end{center}

When $q=\tilde{\mathfrak q}\tilde{\mathfrak q}'$ is split in $\tilde
F$, $(\Delta \Delta', q)_q = (\tilde D, q)_q=1$. So either $q$
splits completely in $K$ and thus in $M=K \tilde K$ or both
$\mathfrak q$ and $\mathfrak q'$ are inert in $K$. Similarly, since
$q$ is split in $F$, either $q$ splits  completely in $\tilde K$ and
thus in $M$ or both $\tilde{\mathfrak q}$ and $\tilde{\mathfrak q}'$
are inert in $\tilde K$. Therefore, under the condition that $q$ is
split in $\tilde F$, we have
$$
\beta_q(T_q(\mu n), \Psi_q)=\begin{cases}
 4 &\ff q \hbox{ split completely in } \tilde K,
 \\
 0 &\hbox{otherwise}.
 \end{cases}
 $$

 When $q$ is inert in $\tilde F$, $(\Delta \Delta', q)_q=(\tilde D,
 q)_q=-1$, exactly one of $\Delta$ or $\Delta'$ is a square in
 $\mathbb Z_q^*$. This implies that there are at least three primes
 of $M$ above $q$, and thus that $q\O_{\tilde F}$ has to be split in
 $\tilde K$. This finishes the proof of the proposition.
\end{proof}

Finally we consider the case $t \ge 1$ and prove

\begin{prop} \label{oldprop5.10} Assume that $q|n$ and $t_q= \ord_{q} \frac{q^2\tilde D- n^2}{4D q^2} >0$, and let $T_q(\mu n)$
is $\mathbb Z_q$-equivalent to $\diag(\alpha_q, \alpha_q^{-1} \det
T_q(\mu n))$ with $\alpha_q \in \mathbb Z_q^*$. Then
$$
\beta_q(T_q(\mu  n),\Psi_q) =\begin{cases}
  0 &\ff (-\alpha_q, q)_q=-1,
  \\
  2(t_q+2) &\ff (-\alpha_q, q)_q =1.
  \end{cases}
  $$
  \end{prop}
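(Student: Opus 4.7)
By (\ref{eq5.14}) and Lemma~\ref{oldlem5.6}, I would first reduce the computation to evaluating the three pieces $\beta_q(T,\Psi_{v_1,v_2})$, $\beta_q(T,\Psi_{v_3,-v_4})$, and $\beta_q(T,\Psi_0)$, with $T=\diag(\epsilon_1,\epsilon_2 q^{t_q})$ and $\epsilon_1=\alpha_q$. Expanding $T_q'(\mu n)=gT\,{}^tg$ in the coordinates of $g$ and combining with the identities (\ref{oldeq5.16}) would then yield the key algebraic relations
\[
\epsilon_1 v_1^2+\epsilon_2 q^{t_q}v_2^2=-\Delta,\qquad \epsilon_1 v_3^2+\epsilon_2 q^{t_q}v_4^2=-\Delta'.
\]
Since $\tilde D=\Delta\Delta'$ is a prime different from $q$, both $\Delta,\Delta'\in\mathbb Z_q^*$, and reducing mod $q$ (using $t_q\ge 1$) gives $-\Delta\equiv\alpha_q v_1^2$ and $-\Delta'\equiv\alpha_q v_3^2\pmod q$. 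In particular $(\Delta,q)_q=(\Delta',q)_q=(-\alpha_q,q)_q$.

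As in the proof of Proposition~\ref{oldprop5.3}, a vector $\vec x_0\in V_q^2$ with $T(\vec x_0)=T$ exists iff $(-\alpha_q,q)_q^{t_q}=1$. When $(-\alpha_q,q)_q=-1$ and $t_q$ is odd there is no such $\vec x_0$, so $\beta_q=0$ trivially. When $(-\alpha_q,q)_q=-1$ and $t_q\ge 2$ is even, one chooses $x_0,y_0\in q^{t_q/2}\mathbb Z_q^*$, both $\equiv 0\pmod q$, and Lemma~\ref{oldlem5.7}(1) restricts attention to the $r=0$ cosets. Substituting these reductions into the explicit formulas for $h(0,0)^{-1}.\vec x_0$ and $h'(0,u)^{-1}.\vec x_0$ collapses the $\Omega_{v_1,v_2}$-condition to $v_1(1+\alpha_q u^2)\equiv 0\pmod q$; since $-\alpha_q$ is a non-square, $1+\alpha_q u^2$ is a unit, forcing $v_1\equiv 0$, which contradicts $-\Delta\equiv\alpha_q v_1^2\in\mathbb Z_q^*$. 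A symmetric argument rules out $\Omega_{v_3,-v_4}$, and $L_0(q)^2$ is excluded because $z_1=-\alpha_q$ is a unit, so every piece of the alternating sum of Lemma~\ref{oldlem5.6} vanishes.

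In the case $(-\alpha_q,q)_q=1$, Lemma~\ref{oldlem5.7}(2) identifies the candidate cosets as $h(r,u)$ (one per $r$) and $h'(r,u)$ ($q$ per $r$) for $0\le r\le t_q$ with $u\equiv -y_0/x_0\pmod{q^r}$, giving $(t_q+1)(q+1)$ cosets in total. Using the formulas of Section~\ref{sect5}, for $r\ge 1$ the $h(r,u)^{-1}.\vec x_0$ coset has both $z$-components divisible by $q^r$, so it automatically lies in $\Omega_{v_1,v_2}$ and in $L_0(q)^2$, and lies in $\Omega_{v_3,-v_4}$ whenever $r<t_q$. For the $h'(r,u)^{-1}.\vec x_0$ cosets with $r\ge 1$, I would parameterize $u=-y_0/x_0+q^r\eta$ with $\eta\in\mathbb Z_q/q$, use the expansions $1+\alpha_q u^2=-\epsilon_2 q^{t_q}/x_0^2-2\alpha_q(y_0/x_0)q^r\eta+\alpha_q q^{2r}\eta^2$ and its analogue for $y_0+2x_0u-\alpha_q y_0u^2$, and count the number of $\eta$'s solving each mod-$q$ condition; the counts bifurcate according to whether $r<t_q$ or $r=t_q$. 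The $r=0$ contributions are controlled by the polynomial $f(u)=\alpha_q(v_1-v_2 y_0)u^2+2x_0 v_2 u+(v_1+v_2 y_0)$ of Lemma~\ref{oldlem5.8}: a direct calculation shows that its discriminant equals $4\Delta$, which is a square mod $q$ by the first paragraph, so $f$ has two mod-$q$ roots, contributing $2$ to $\beta_q(T,\Psi_{v_1,v_2})$, symmetrically $2$ to $\beta_q(T,\Psi_{v_3,-v_4})$, and $1$ to $\beta_q(T,\Psi_0)$.

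The principal obstacle will be the careful bookkeeping that combines the partial counts from $r=0$ and from $1\le r\le t_q$ (and from the $h$- versus $h'$-cosets) into the alternating sum of Lemma~\ref{oldlem5.6}. The outcome $2(t_q+2)$ reflects the fact that the congruences $-\Delta\equiv\alpha_q v_1^2$ and $-\Delta'\equiv\alpha_q v_3^2$ mod $q$, together with $(\Delta,q)_q=(\Delta',q)_q=1$, force matching contributions from the $v_1,v_2$- and $v_3,-v_4$-pieces, while the $L_0(q)^2$-piece subtracts exactly the common over-counting between them; tracing this cancellation to its conclusion yields the stated formula.
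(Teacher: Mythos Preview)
Your approach is essentially the paper's: reduce via (\ref{eq5.14}) and Lemma~\ref{oldlem5.6}, use the identity $\epsilon_1 v_1^2+\epsilon_2 q^{t_q}v_2^2=-\Delta$ (which the paper also records) to control $v_1\bmod q$, case-split on $(-\alpha_q,q)_q$, and count cosets via Lemma~\ref{oldlem5.7}. One small improvement: at $r=0$ in the split case you compute the discriminant of $f$ as $4\Delta$ and read off $(\Delta,q)_q=(-\alpha_q,q)_q$ directly from the mod-$q$ identity, whereas the paper takes a detour through the arithmetic of $\tilde K$ (invoking \cite[Lemma~6.2]{m=1} to argue that $q$ splits completely in $\tilde K$). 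Your route is cleaner and avoids that external input.

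One slip to correct in the bookkeeping: you write that for $r\ge 1$ the coset $h(r,u)^{-1}.\vec x_0$ ``lies in $\Omega_{v_3,-v_4}$ whenever $r<t_q$''. In fact both $z$-components carry the factor $q^r$, so this coset lies in $\Omega_{v_1,v_2}$, in $\Omega_{v_3,-v_4}$, \emph{and} in $L_0(q)^2$ for \emph{every} $r\ge 1$; there is no bifurcation on the $h$-side. The genuine $r<t_q$ versus $r=t_q$ bifurcation occurs only in the $h'(r,u)$-count for $\Psi_0$, where at $r=t_q$ the second congruence forces $y_0\equiv 0\pmod q$ and hence fails. With that correction your counts give $\beta_q(T,\Psi_{v_1,v_2})=\beta_q(T,\Psi_{v_3,-v_4})=2t_q+2$ and $\beta_q(T,\Psi_0)=2t_q$, and the alternating sum is $2(t_q+2)$ as claimed.
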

  \begin{proof} Since $T_q'(\mu n)$ is $\mathbb Z_q$-equivalent to
  $T_q(\mu n)$, it is also $\mathbb Z_q$ equivalent to $\diag(\alpha_q, \alpha_q^{-1} \det
T_q(\mu n))$, which we now shorten as $T=\diag(\epsilon_1, \epsilon_2
q^t)$ with $\epsilon_1 =\alpha_q, \epsilon_2 \in \mathbb Z_q^*$ and
$t=t_q$. As in the proof of Proposition \ref{oldprop5.9}, we write
$T_q'(\mu n) =g T {}^tg$ so that
$$
\beta_q(T_q'(\mu n), \Psi_q') =\beta_q(T, \Psi_{v_1, v_2})
+\beta_q(T, \Psi_{v_3, -v_4}) -\beta_q(T, \Psi_0).
$$
Here $v_i$ are given as in Lemma \ref{oldlem5.7}.

{\bf Case 1:} We  first assume that $(-\epsilon_1, q)_q =-1$, so
$t=2 t_0$ is even. In this case $x_0, y_0 \in q^{t_0} \mathbb Z_q$
and thus $x_0, y_0 \equiv 0 \mod q$.  In order to compute
$\beta_q(T, \Psi_{v_1, v_2})$, we only need to consider whether
$\vec x_0$ and $h'(0, u)^{-1}.\vec x_0$ belong to $\Omega_{v_1,
v_2}$ by Lemma \ref{oldlem5.6}. It is easy to check as before that
$\vec x_0 \in \Omega_{v_1, v_2}$ if and only if $v_1 -v_2 y_0 \equiv
v_1 \equiv 0\mod q$, and $h'(0, u)^{-1}.\vec x_0 \in \Omega_{v_1,
v_2}$ if and only if
$$
v_1(1+\epsilon_1 u^2) + v_2 (y_0 +x_0 u-\epsilon_1 y_0 u^2) \equiv 0
\mod q
$$
i.e., $v_1 \equiv 0 \mod q$. On the other hand, the same calculation
as in the proof of Proposition \ref{oldprop5.9} gives
$$
-(\epsilon_1 v_1^2 + \epsilon_2 q^t v_2^2) =\Delta \ne 0 \mod q
$$
and thus $v_1 \not\equiv 0 \mod q$. So $\beta_q(T, \Psi_{v_1,
v_2})=0$. For the same reason, $\beta_q(T, \Psi_{v_3, -v_4}) =0$,
and thus $\beta_q(T_q(\mu n), \Psi_q)=\beta_q(T_q'(\mu n), \Psi_q')
=0$.

{\bf Case 2:}  Now we assume $(-\epsilon_1, q)_q=1$. By Lemma
\ref{oldlem5.6}, we need to consider how many  $h(r, u)^{-1}.\vec
x_0$ and $h'(r, u)^{-1}.\vec x_0$ are in $\Omega_{v_1, v_2}$, with
$0\le r \le t$ and $u \equiv -\frac{y_0}{x_0} \mod q^r$. In the case
$h(r, u)$ we count the number of $u\mod q^r$ classes, and in the
case $h'(r, u)$ we count  the number of $u \mod q^{r+1}$ classes.

When $r=0$, the same argument as in the proof of Proposition
\ref{oldprop5.9} shows that there are two classes of $h$ among $h(0,
u)$ and $h'(0, u)$ satisfying $h^{-1}.\vec x_0 \in L_{v_1, v_2}$,
since $q$ splits completely in $\tilde K$. Indeed, let $n_1 =n/q \in
\mathbb Z$. Then $t=t_q>0$ means $q|\frac{\tilde D- n_1^2}{4D}$ and
thus $q$ splits in $\tilde F$. Now \cite[Lemma 6.2]{m=1} implies
that one prime of $\tilde F$ above $q$ splits in $\tilde K$. Since
$q$ is split in $F$, this implies that both primes of $\tilde F$
above $q$ split in $ \tilde K$, i.e., $q$ splits completely in
$\tilde K$.

When $r >0$, $h(r, u)^{-1}.\vec x_0 \in \Omega_{v_1, v_2}$
automatically. On the other hand, the same calculation as in the
proof of Lemma \ref{oldlem5.8} shows that  $h(r, u)^{-1}.\vec x_0
\in \Omega_{v_1, v_2}$ if and only if
\begin{equation} \label{oldeq5.25}
\epsilon_1 (v_1 -v_2 y_0) u^2 + 2 x_0 v_2 u +(v_1 +v_2 y_0) \equiv 0
\mod q^{r+1}.
\end{equation}
Since $u \equiv - \frac{y_0}{x_0} \mod q^r$, we write $u
=-\frac{y_0}{x_0} + q^r \tilde u$. Now (\ref{oldeq5.25}) becomes
$$
\frac{2 \epsilon_1 y_0 v_1}{x_0} q^r \tilde u + (v_1 -v_2 y_0)
\frac{-\epsilon_2 q^t}{x_0^2} \equiv 0 \mod q^{r+1}.
$$
Since $\epsilon_1 v_1^2 + \epsilon_2 q^t \epsilon_2^2 =-\Delta
\not\equiv 0\mod q$, one has  $v_1 \not\equiv 0 \mod q$. So the
above equation has a unique solution $\tilde u \mod q$, and there is
a unique $u \mod q^{r+1}$ for $1 \le r \le t$ such that $h'(r,
u)^{-1}.\vec x_0 \in \Omega_{v_1, v_2}$. In summary, we have proved
$$
\beta_q(T, \Psi_{v_1, v_2}) =2t+2.
$$
For the same reason, $\beta_q(T, \Psi_{v_3, -v_4}) =2t+2$. A similar
argument gives $\beta_q(T, \Psi_0) =2t$. Therefore,
$$
\beta_q(T_q(\mu n), \Psi_q) =\beta_q(T, \Psi_{v_1, v_2}) +
\beta_q(T, \Psi_{v_3, -v_4}) -\beta_q(T, \Psi_0) =2t+4.
$$
\end{proof}

\section{ Computing $b_m(p)$ and Proof of Theorem \ref{theo1.5} }
\label{sect6}

In this section, we compute $b_m(p)$ assuming $(m, 2 D \tilde D p)
=1$ and prove the following theorem. A little more work could remove
the restriction. At the end of this section, we prove Theorem
\ref{theo1.5}, which is  clear after all these preparations.

\begin{theo} \label{theo6.1} Assume (\ref{eqOK}) and that $\tilde D =\Delta \Delta' \equiv 1 \mod 4$ is square free,  and that $m >0$ is square-free with $(m, 2D\tilde D p)=1$. Let $t_l=\ord_l \frac{m^2 \tilde D-n^2}{4 D m^2}$. Then
\begin{equation}\label{eq6.1}
b_m(p) = \sum_{ \substack{0< n <m \sqrt{\tilde D}\\ \frac{m^2
\tilde D -n^2}{4D} \in p \mathbb Z_{>0}}} (\ord_p \frac{m^2\tilde
D -n^2}{4D} +1)\sum_{\mu} b(p,\mu n, m)
\end{equation}
where
\begin{equation} \label{eq6.2}
b(p, \mu n, m) = \prod_{l |\frac{m^2 \tilde D-n^2}{4 D}} b_l(p,
\mu n, m)
\end{equation}
is given as follows.

(1) \quad When $l \nmid m$ and $l| \frac{m^2 \tilde D-n^2}{4 D}$,
$T_m(\mu n)$ is $\mathbb Z_l$-equivalent to
$\diag(\alpha_l,\alpha_l^{-1} \det T_m(\mu n))$ with $\alpha_l \in
\mathbb Z_l^*$, and
\begin{equation}
b_l(p, \mu n, m) =
  \begin{cases}
  \frac{1- (- \alpha_p, p)_p^{t_p}}2 &\ff l=p,
  \\
   t_l+1 &\ff l\nmid mp, (-\alpha_l,
   l)_l=1,
   \\
    \frac{1 + (-1)^{t_l}}2 &\ff l \nmid mp, (-\alpha_l,
   l)_l=-1.
   \end{cases}
 \end{equation}

(2) \quad When $l |m$, and $t_l=0$, one has
\begin{equation}
b_l(p, \mu n, m) =\begin{cases}
 4 &\ff  l \hbox{ split completely in } M,
 \\
 2 &\ff l \hbox{  inert in } \O_{\tilde F}, l\O_{\tilde F} \hbox{
 split in  } \tilde K,
 \\
  0 &\hbox{otherwise}.
  \end{cases}
  \end{equation}
Here $M=K \tilde K$ is the Galois closure of $K$ (and $\tilde K$)
over $\mathbb Q$.

  (3) \quad When $l|m$ is split in $F$ and $t_l >0$, $T_m(\mu n)$ is $\mathbb Z_l$-equivalent to
  $\diag(\alpha_l,\alpha_l^{-1} \det T_m(\mu n))$ with
$\alpha_l\in \mathbb Z_l^*$, and
  \begin{equation}
b_l(p, \mu n, m) =\begin{cases}
   0 &\ff (-\alpha_l, l)_l=-1,
   \\
   2(t_l+2) &\ff (-\alpha_l, l)_l=1.
   \end{cases}
   \end{equation}

(4) \quad When $l|m$ is inert in $F$ and $t_l>0$, $T_m(\mu n)$ is
$\mathbb Z_l$-equivalent to
  $\diag(\alpha_l,\alpha_l^{-1} \det T_m(\mu n))$ with
$\alpha_l\in \mathbb Z_l^*$, and
\begin{equation}
b_l(p, \mu n, m) =\begin{cases}
   1-(-1)^{t_l} &\ff (-\alpha_l, l)_l=-1,
   \\
   0 &\ff (-\alpha_l, l)_l=1.
   \end{cases}
   \end{equation}
\end{theo}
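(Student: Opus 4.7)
The plan is to open the definition (1.3) of $b_m(p)$, factor the ideal-counting function $\rho$ into local contributions, and match each local factor with the formulas (6.3)--(6.6) for $b_l(p,\mu n,m)$.

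First, I would rewrite the sum in (1.3) in terms of pairs $(n,\mu)$ with $0 < n < m\sqrt{\tilde D}$ as in Remark 2.2. The requirement $t = (n + m\sqrt{\tilde D})/(2D) \in d_{\tilde K/\tilde F}^{-1}$ together with the assumption $\tilde D \equiv 1 \bmod 4$ square-free forces the divisibility relations of Lemma 1.1, and since $Nd_{\tilde K/\tilde F} = D$ one has
\begin{equation*}
N_{\tilde F/\mathbb Q}(td_{\tilde K/\tilde F}) = \frac{m^2\tilde D - n^2}{4D}.
\end{equation*}
Writing $\log|\mathfrak p| = f(\mathfrak p/p)\log p$ and accumulating the $p$-contributions of $\ord_{\mathfrak p} t$ over $\mathfrak p\mid p$ collapses to $\ord_p\tfrac{m^2\tilde D-n^2}{4D}$, producing the prefactor $(\ord_p\tfrac{m^2\tilde D-n^2}{4D} + 1)$ in (6.1).

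Next, I would use the standard local factorization $\rho(\mathfrak a) = \prod_{\mathfrak l} \rho_{\mathfrak l}(\mathfrak a)$, where $\mathfrak l$ ranges over primes of $\tilde F$. The local factor $\rho_{\mathfrak l}$ equals $\ord_{\mathfrak l}(\mathfrak a) + 1$ if $\mathfrak l$ splits in $\tilde K$, equals $1$ or $0$ (by parity of $\ord_{\mathfrak l}(\mathfrak a)$) if $\mathfrak l$ is inert, and equals $1$ if $\mathfrak l$ is ramified. Regrouping the primes of $\tilde F$ above each rational prime $l$ dividing $(m^2\tilde D - n^2)/(4D)$ yields an expression of the form
\begin{equation*}
b_m(p) = \sum_{n,\mu}\Big(\ord_p\tfrac{m^2\tilde D-n^2}{4D} + 1\Big) \prod_{l \mid (m^2\tilde D-n^2)/(4D)} c_l(p,\mu n,m),
\end{equation*}
where $c_l$ is an explicit local factor depending on the splitting behavior of $l$ in $\tilde F$ and $\tilde K$.

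Finally, I would check $c_l = b_l(p,\mu n,m)$ one case at a time. For $l\nmid 2mp$, the Hasse invariant $(-\alpha_l,l)_l$ of the diagonalization of $T_m(\mu n)$ coincides (via $\det T_m(\mu n) = (m^2\tilde D-n^2)/(Dm^2)$ from Lemma 1.1) with the symbol controlling the splitting of $l$ in $\tilde F$, so $c_l$ matches (6.3). The prime $l=p$ is similar, with the non-split condition on $\mathfrak p$ in $\tilde K$ yielding the factor $1 - (-\alpha_p,p)_p^{t_p}$. For $l\mid m$ with $t_l = 0$, the factor $c_l$ records the full splitting type of $l$ in $M = K\tilde K$, which is ``split completely'', ``inert in $\tilde F$ then split in $\tilde K$'', or (ruled out) a hybrid case where only one prime of $\tilde F$ above $l$ splits in $\tilde K$; this matches (6.4). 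The two remaining cases $t_l > 0$ with $l$ split or inert in $F$ reduce to counting ideals with prescribed order and match (6.5)--(6.6).

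The main obstacle is the case $l\mid m$: the geometric-side factor $b_l(p,\mu n,m)$ was derived in Section 5 from a Whittaker-integral over $\mathbb B_l^*$ involving the nontrivial level structure $\Omega_q$, and matching the algebraic side requires carefully tracking how the splitting of $l$ in $\tilde K$ distributes among the primes of $\tilde F$ above $l$, together with the combinatorial effect of summing over $\mu$. This is where the global-to-local bookkeeping genuinely uses the square-freeness of $m$ and the coprimality $(m,2D\tilde Dp) = 1$; Lemma 6.2 of \cite{m=1}, already invoked in Section 5, supplies exactly the input needed to exclude the hybrid splitting possibility.
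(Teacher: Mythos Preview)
Your plan matches the paper's proof almost exactly: unpack the definition of $b_m(p)$, normalize the choice of $\mathfrak p$ above $p$ so that the $(\ord_p+1)$ prefactor emerges, factor $\rho$ as a product of local terms $\rho_{\mathfrak l}$, regroup by rational primes $l$, and then invoke the dictionary (Lemma \ref{lemb.4}, which is \cite[Lemma 6.2]{m=1}) between the Hilbert symbol $(-\alpha_l,l)_l$ and the splitting behavior of $\mathfrak l$ in $\tilde K/\tilde F$ to verify each of the four cases. One small correction: in your paragraph on $l\nmid 2mp$, the symbol $(-\alpha_l,l)_l$ governs the splitting of the prime $\mathfrak l$ of $\tilde F$ in $\tilde K$, not the splitting of $l$ in $\tilde F$ (the latter is forced by $l\mid\frac{m^2\tilde D-n^2}{4D}$ together with $(m,\tilde D)=1$); with that fix your case analysis is precisely the one the paper carries out in Proposition \ref{prop6.3} and the subsequent treatment of $l\mid m$.
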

\begin{proof}
Recall
\begin{equation}
b_m(p)= \sum_{\mathfrak p|p} \sum_{\substack{ 0 < n < m \sqrt{\tilde D}\\
\frac{m^2 \tilde D -n^2}{4 D} \in p \mathbb Z_{>0}}} \sum_\mu \rho(t_n
d_{\tilde K/\tilde F} \mathfrak p^{-1}),
\end{equation}
with ($\mu = \pm 1$)
$$
t_{\mu n} = \frac{\mu n +m \sqrt{\tilde D}}{2D} \in d_{\tilde K/\tilde
F}^{-1}.
$$
 Clearly,  $b_m(p) =0$ unless
there is an integer $0 < n < m\sqrt{\tilde D}$ such that $\frac{m^2
\tilde D -n^2}{4D} \in p\mathbb Z_{>0}$.   Fix such an integer $n$
and recall $T_m(\mu n)$ from Lemma \ref{lemold1.1}.

  The condition $\frac{m^2 \tilde D -n^2}{4D} \in p \mathbb Z_{>0}$ implies that either $p$ is split in $\tilde F$ or
  $p|\hbox{gcd}(D, n)$ is ramified in $\tilde F$.
 In the ramified case, we have $p\O_{\tilde F}
  =\mathfrak p^2$. In the split case, we choose the splitting
  $p\O_{\tilde F} =\mathfrak p \mathfrak p'$ so that
  \begin{equation}
   t_{\mu n}=\frac{\mu n +m \sqrt{\tilde D}}{2D} \in \mathfrak p
   d_{\tilde K/\tilde F}^{-1}
   \end{equation}
  satisfies
\begin{equation}
\ord_\mathfrak p t_{\mu n} = \ord_p \frac{m^2\tilde D -n^2}{4D},
\quad \ord_{\mathfrak p'} (t_{\mu n}) =0  \hbox{ or } -1.
\end{equation}
With this notation, we have by definition
\begin{equation}
b_m(p) = \sum_{ \substack{ 0< n <m \sqrt{\tilde D}\\ \frac{m^2
\tilde D -n^2}{4D} \in p \mathbb Z_{>0}}} (\ord_p \frac{m^2\tilde
D -n^2}{4D} +1)\sum_{\mu} b(p,\mu n, m)
\end{equation}
where
\begin{equation}
b(p, \mu n, m) = \begin{cases}
  0 &\ff \mathfrak p \hbox { is  split in } \tilde K,
  \\
  \rho(t_{\mu n} d_{\tilde K/\tilde F} \mathfrak p^{-1}) &\ff \mathfrak p \hbox { is not  split in } \tilde
  K.
  \end{cases}
  \end{equation}

Assume now that $\mathfrak p$ is not split in $\tilde K$. Notice
that
$$
\rho(t_{\mu n}d_{\tilde K/\tilde F} \mathfrak
p^{-1})=\prod_{\mathfrak l} \rho_{\mathfrak l}(t_{\mu n}d_{\tilde
K/\tilde F} \mathfrak p^{-1})
$$
where the product runs over all  prime ideals  $\mathfrak l$ of
$\tilde F$, and
\begin{equation} \label{eqb.9}
\rho_{\mathfrak l}(t_{\mu n} d_{\tilde K/\tilde F}\mathfrak
p^{-1})
 = \begin{cases}
   1    &\ff    \mathfrak l \hbox{ is ramified in  } \tilde K,
   \\
   \frac{1 +(-1)^{\ord_{\mathfrak l} (t_{\mu n} d_{\tilde K/\tilde F} \mathfrak
   p^{-1})}}2 &\ff  \mathfrak l \hbox{ is inert in  } \tilde K,
   \\
   1+\ord_{\mathfrak l} (t_{\mu n} d_{\tilde K/\tilde F }\mathfrak
   p^{-1})
    &\ff  \mathfrak l \hbox{ is split  in  } \tilde K.
    \end{cases}
    \end{equation}
We write (assuming that $\mathfrak p$ is not split in $\tilde F$)
\begin{equation}
b(p, \mu n, m) =\prod_{l} {b_l(p, \mu n, m)}
\end{equation}
with
\begin{equation}
b_l(b, \mu n, m) = \prod_{\mathfrak l| l} \rho_{\mathfrak
l}(t_{\mu n} d_{\tilde K/\tilde F} \mathfrak p^{-1}).
\end{equation}
Clearly $b_l(b, \mu n , m) =1$ if $l \nmid \frac{m^2 \tilde D
-n^2}{4D p}$.  When $l |  \frac{m^2 \tilde D -n^2}{4D p}$, there
are three cases:

 (a) $l|m$,

  (b)  $l\nmid m$ and
$l|\hbox{gcd}(\tilde D, n)$ is ramified in $\tilde F$, or

 (c)
$l\nmid m$, and $l\O_{\tilde F} =\mathfrak l \mathfrak l'$ is
split in $\tilde F$.

 In case (c), we choose the ideal $\mathfrak
l$ so that
\begin{equation} \label{eqb.11}
\ord_{\mathfrak l} (t_{\mu n} d_{\tilde K/\tilde F}\mathfrak
p^{-1}) =\ord_l \frac{m^2\tilde D -n^2}{4 D p}=\ord_{l}\ord_l
\frac{m^2\tilde D -n^2}{4 D pm^2} , \quad \ord_{\mathfrak l'}
(t_{\mu n} d_{\tilde K/\tilde F}\mathfrak p^{-1})=0.
\end{equation}

Since $m$ does not affect local calculation in cases (b) and (c),
the same proof as in \cite[Lemma 6.2]{m=1} gives

\begin{lem}  \label{lemb.4} Let the notation be as above. Assume $l|\frac{m^2\tilde D -n^2}{4D}$, $l \nmid m$ and $\mathfrak l\ne d_{\tilde K/\tilde F}$.  Then
 $T_m(\mu n)$ is $\GL_2(\mathbb Z_l)$-equivalent to $\diag(\alpha_l,
 \alpha_l^{-1} T_m(\mu n))$ with $\alpha_l \in \mathbb Z_l^*$.
 Moreover,
 $\tilde K/\tilde F$ is split (inert) at $\mathfrak l$ if and only if $(-\alpha_l,
 l)_l=1$ (resp. $-1$).
 \end{lem}

\begin{prop}  \label{prop6.3} One has always
$$
b(p, \mu n, m) =\prod_{l|\frac{m^2 \tilde D -n^2}{4D}} b_l(p, \mu
n, m)
$$
with
$$
b_l(p, \mu n, m) =
  \begin{cases}
  \frac{1- (- \alpha_p, p)_p^{t_p}}2 &\ff l=p,
  \\
   t_l+1 &\ff l\nmid mp, (-\alpha_l,
   l)_l=1,
   \\
    \frac{1 + (-1)^{t_l}}2 &\ff l \nmid mp, (-\alpha_l,
   l)_l=-1.
   \end{cases}
   $$
   Here $T_m(\mu n)$ is $\GL_2(\mathbb Z_l)$-equivalent to $\diag(\alpha_l,
 \alpha_l^{-1} \det T_m(\mu n))$ with $\alpha_l \in \mathbb Z_l^*$, and
 $t_l=\ord_l T_m(\mu n) =\ord_l \frac{m^2 \tilde D -n^2}{4D m^2}$.
   \end{prop}
\begin{proof} First notice that the formula is true  even when
$\mathfrak p$ is split in $\tilde K$. Indeed,
$$
b_p(p, \mu n, m)=\frac{1- (-\alpha_p, p)_p^{t_p}}2 = 0
$$
since $(-\alpha_p, p)_p=1$ by Lemma \ref{lemb.4}. When $\mathfrak
p$ is non-split in $\tilde K$, the formulae follows from Lemma
\ref{lemb.4} and  (\ref{eqb.9})-(\ref{eqb.11}).

\end{proof}

{\bf Proof of Theorem \ref{theo6.1} (cont.)}:   Proposition
\ref{prop6.3}  settles Formulae (\ref{eq6.1}), (\ref{eq6.2}) and
Case (1) in the theorem. Now we assume $l|m$ and $l |\frac{m^2
\tilde D -n^2}{4D}$. This implies $l|n$. In this case we have
\begin{equation}
T_m(\mu n) =lT_{\frac{m}l}(\mu \frac{n}l).
\end{equation}
Write $m_1 =\frac{m}l$ and $n_1 =\frac{n}l$.

 (2) \quad Now we deal
with case (2): i.e., $l|m$ and $t_l=\frac{m^2 \tilde D -n^2}{4D
m^2}=\ord_l\frac{m_1^2 \tilde D -n_1^2}{4D} =0$.

{\bf Case 1:} \quad  If $l$ is inert in $\tilde F$, then $
\ord_{l} t_{\mu n} =1$. So
$$
b_l(p, \mu n, m) =\begin{cases}
  2 &\ff l\O_{\tilde F} \hbox{ is split in } \tilde K,
  \\
   0 &\ff l\O_{\tilde F} \hbox{ is inert in }\tilde K.
 \end{cases}
 $$

{\bf Case 2:} \quad If  $l=\mathfrak l \mathfrak l'$ is split in
$\tilde F$, then   $\ord_{\mathfrak l} t_{\mu n} =
\ord_{\mathfrak l'} t_{\mu n} =1$, and so
$$
b_l(p, \mu n, m) = \begin{cases}
  4 &\ff l \hbox{ split completely in } \tilde K,
  \\
  0 &\hbox{otherwise}.
  \end{cases}
  $$
On the other hand, $\Delta \Delta'=D v^2$ for some integer $v \ne
0$. So $l$ is split completely in $\tilde K$ implies that $(D,
l)_l=1$, i.e., $l$ is split in $F$ too, and thus $l$ is split
completely in $M$. This proves (2)

(3) \quad Now we assume $l|m$, $t_l>0$ and that $l$ is split in
$F$. in this case, $l|\frac{m_1^2 \tilde D -n_1^2}{4D}$. Since
$(m, 2 D \tilde D p)=1$, $l=\mathfrak l \mathfrak l'$ is split in
 $\tilde F$.
 Choose the splitting
in $\tilde F$  so that
\begin{equation} \label{eqb.13}
\ord_{\mathfrak l} t_{\mu n} = t_l +1, \quad \ord_{\mathfrak l'}
t_{\mu n} =1.
\end{equation}

 Since $l$ is split in $F$, $(D, l)_l=1$.  So $\tilde\Delta
\tilde{\Delta}' =D v^2$ implies that either both $\mathfrak l$ and
$\mathfrak l'$ are inert in $\tilde K$ or both are split  in
$\tilde K$. So
\begin{equation} b_l(p, \mu n, m)= \begin{cases}
  2 (t_l+2)  &\ff  l \hbox{  split completely in } \tilde K,
  \\
  0 &\hbox{otherwise}.
  \end{cases}
  \end{equation}
 Since $t_l >0$, applying Lemma
\ref{lemb.4} to the pair $(m_1, n_1)$, we see that $\tilde
K/\tilde F$ is split at $\mathfrak l$ if and only if $(-\alpha_l,
l)_l =1$. So we have
\begin{equation}
b_l(p, \mu n, m) =\begin{cases}
  0 &\ff (-\alpha_l , l)_l
  =-1,
  \\
  2 (t_l+2) &\ff (- \alpha_l, l)_l
  =1
  \end{cases}
  \end{equation}
as claimed.

(4) \quad Finally, we assume $l|m$, $t_l >0$, and $l$ is inert in
$F$. Just as in (3), $l=\mathfrak l\mathfrak l'$ is split in
$\tilde F$ and we can again
 choose the splitting as in (\ref{eqb.13}).
Since $(D, l)_l=-1$,  $\tilde\Delta \tilde{\Delta}' =D v^2$
implies that exactly one of $\mathfrak l$ and $\mathfrak l'$ is
split in $\tilde K$, and the other one is inert in $\tilde K$. So
\begin{equation}
b_l(p, \mu n, m) =\begin{cases}
   0 &\ff  \mathfrak l \hbox{ is split in } \tilde K,
   \\
   1 -(-1)^{t_l} &\ff \mathfrak l \hbox{ is inert in } \tilde K.
\end{cases}
\end{equation}
  Applying Lemma
\ref{lemb.4} to $(m_1, n_1)$ again, we obtain (4). This finishes
the proof of Theorem \ref{theo6.1}.
\end{proof}

{\bf Proof of Theorem \ref{theo1.5}:} By  Theorems \ref{theo3.5} and
\ref{theo4.1}, one has for $p \ne q$
$$
(\mathcal T_q.\CM(K))_p = \frac{1}2 \sum_{\substack{ 0 < n < q
\sqrt{\tilde D} \\ \frac{q^2 \tilde D-n^2}{4D} \in p\mathbb Z_{>0}}}
\left( \ord_p \frac{q^2 \tilde D -n^2}{4D} +1\right) \sum_\mu
\beta(p, \mu n)
$$
where
$$
\beta(p, \mu n) = \frac{1}2 \prod_{l} \beta_l(T_q(\mu n), \Psi_l)
$$
is computed in Section \ref{sect5}. By Theorems \ref{theo5.1} and
\ref{theo5.2}, one has $\beta_l(T_q(\mu n), \Psi_l)=1$ for $l
\nmid \frac{q^2 \tilde D -n^2}{4D}$, and so
$$
\beta(p, \mu n) = \frac{1}2 \prod_{l|\frac{q^2 \tilde D -n^2}{4D}
} \beta_l(T_q(\mu n), \Psi_l)
$$
Now comparing Theorems \ref{theo5.1} and \ref{theo5.2} with
Theorem \ref{theo6.1},  one sees that for $l| \frac{q^2 \tilde D-
n^2}{4D}$ (recall $q$ is a prime split in $F$)
$$
\beta_l(T_q(\mu n), \Psi_l)
 =\begin{cases} 2 b_p(p, \mu n, q)  &\ff l=p,
 \\
b_l(p, \mu n, q)  &\ff l\ne p.
\end{cases}
$$
and thus
$$
\beta(p, \mu n) = b(p, \mu n, q).
$$
Now applying Theorem \ref{theo6.1}, one sees
$$
(\mathcal T_q.\CM(K))_p =\frac{1}2 b_q(p)
$$
as claimed in Theorem \ref{theo1.5}.

\section{Faltings height and Proofs of Theorems \ref{maintheo}, \ref{Colmez} and \ref{theo1.4} }
\label{sect7}

\newcommand{\hPic}{\widehat{\operatorname{Pic}}(\tilde{\mathcal M}, \mathcal
D_{\hbox{pre}})}

\newcommand{\hCH}{\widehat{\operatorname{CH}}^1(\tilde{\mathcal M}, \mathcal
D_{\operatorname{pre}})}

\newcommand{\hZ}{\widehat{\operatorname{Z}}^1(\tilde{\mathcal M},
\mathcal D_{\operatorname{pre}})}

Let $\tilde{\mathcal M}$ be a toroidal compactification of $\mathcal
M$ and let $C= {\tilde{\mathcal M} -\mathcal M}$ be the boundary. We
need the Faltings height pairing in a slightly more general setting
as written in literature, i.e., on DM-stacks where Green functions
have pre-log-log growth along the boundary $C$ in the sense of
\cite{BKK}. We restrict to our special case to avoid introducing
more complicated concept `pre-log-log Green object', and refer to
\cite{BKK}  for detailed study in this subject, and to \cite[Section
1]{BBK} for a brief summary.

Let $N \ge 3$, and let $X$ be the moduli scheme over $\mathbb C$ of
abelian surfaces with real multiplication by $\O_F$ and with full
$N$-level structure \cite{Pa}, and let $\tilde X$ be a toroidal
compactification of $X$. Then  $M=\mathcal M(\mathbb C) = [\Gamma
\backslash X]$ and $\tilde M=\tilde{\mathcal M}(\mathbb C)=[\Gamma
\backslash \tilde X]$  are quotient stacks, where $\Gamma =\Gamma(N)
\backslash \SL_2(\O_F)$. Let $\pi$ be the natural map from $\tilde
X$ to $\tilde M$. Let $Z$ be a divisor of $\tilde M$, and let $Z_N
=\pi^{-1}(Z)$ be its preimage in $\tilde X$. Following  \cite[Chapter
2]{KRY2}, the Dirac current $\delta_Z$ on $\tilde M$ is given by
$$
\langle \delta_Z, f \rangle_{\tilde M }= \frac{1}{\# \Gamma} \langle
\delta_{Z_N}, f\rangle_{\tilde X}
$$
for every $C^\infty$ function on $\tilde M$ with compact support, which
is defined as a $\Gamma$-invariant $C^\infty$ functions on $X$
with compact support.  A  pre-log-log Green function for $Z$ is
defined to be a $\Gamma$-invariant pre-log-log Green function $g$
for $Z_N$, i.e., $g$  is $\Gamma$-invariant, has  log singularity
along $Z_N$ and  pre-log-log growth along $C$ in the sense of
\cite{BKK}, see also \cite[Section 1]{BBK} such that
$$
d d^c g + \delta_{Z_N} = [\omega]_{\tilde X}
$$
as currents for a $\Gamma$-invariant $C^\infty$ (log-log growth
along with $C$ and $C^\infty$ everywhere else) $(1, 1)$-form
$\omega$. When viewed as currents on $\tilde M$, one has also
$$
d d^c g + \delta_Z= [\omega]_{\tilde M}.
$$
Let  $\hZ$ be the abelian group of the pairs $(\mathcal Z, g)$ where
$\mathcal Z$ is a divisor of $\tilde{\mathcal M}$ and $g$ is a
pre-log-log Green function for $Z=\mathcal Z(\mathbb C)$. For a
rational function $f$ on $\mathcal M$,
$$
\widehat{\div} (f) = (\div f, - \log |f|^2) \in \hZ
$$
and let $\hCH$ be the quotient group of $\hZ$ by the subgroup
generated by all $\widehat{\div}(f)$. Let $\mathcal Z$ be a prime
 cycle  in $\mathcal M$ (not intersecting with the boundary $C$)
 of dimension $1$, and let $j: \mathcal Z \rightarrow
 \tilde{\mathcal M}$ be the natural embedding. Then $j$ induces a
 natural map
 \begin{equation}
 j^*:  \hCH_\mathbb Q \rightarrow \widehat{\CH}^1(\mathcal Z)_\mathbb Q,
 \end{equation}
which is given by
$$
j^*(\mathcal T, g) = (j^*\mathcal T, j^*g),  \quad  j^*(g) (z) =
g(j(z))
$$
when $\mathcal T$ and $\mathcal Z$ intersect properly. Here for an
abelian group $A$, we write $A_\mathbb Q$ for the $\mathbb Q$-vector
space $A\otimes \mathbb Q$. Since $\mathcal Z(\mathbb C)$ does not
intersect with the boundary $C$, $j^*g$ well-defined over $\mathcal
Z(\mathbb C)$. Here arithmetic Chow group $\widehat{\CH}^1(\mathcal
Z)$ is defined the same way as above except that the Green function
$g$ is $C^\infty$ (actually in special case, just constants at
points of $\mathcal Z(\mathbb C)$). In \cite[Chapter 2]{KRY2}, it is
shown that there is a linear map---the arithmetic degree
\begin{equation}
\widehat{\deg}: \widehat{\CH}^1(\mathcal Z)_\mathbb Q \rightarrow
\mathbb R, \quad \widehat{\deg}(\mathcal T, g) = \sum_{p} \sum_{z
\in \mathcal T(\bar{\mathbb F}_p)} \frac{1}{\#\Aut z} i_p(\mathcal
T, z) \log p + \frac{1}2 \sum_{z \in \mathcal Z(\mathbb C)}
\frac{1}{\#\Aut(z)} g(z).
\end{equation}
Here $i_p(\mathcal T) =\operatorname{Length}( \hat{\O}_{\mathcal T,
z})$ and $\hat{\O}_{\mathcal T, z}$ is the strictly local henselian
ring of $\mathcal T$ at $z$. This way, we obtain a bilinear
map---the Faltings height function
\begin{equation}
h: \, \hCH_\mathbb Q \times Z^2(\mathcal M)_\mathbb Q \rightarrow
\mathbb R, \quad (\hat{\mathcal T}, \mathcal Z) \mapsto
h_{\hat{\mathcal T}}(\mathcal Z) =\widehat{\deg}(j^*\hat{\mathcal
T}),
\end{equation}
which is given by
\begin{equation} \label{newneweq7.4}
h_{\hat{\mathcal T}}(\mathcal Z) = \mathcal Z.\mathcal T +\
\frac{1}2 \sum_{z \in \mathcal Z(\mathbb C)}\frac{1}{\#\Aut(z)} g(z)
\end{equation}
when $\mathcal Z$ and $\mathcal T$ intersect properly.

 Finally, if $\hat{\mathcal L}=(\mathcal L, \|\, \| )$ is a metrized line bundle on
 $\tilde{\mathcal M}$ with a pre-log growth metric along the
 boundary in the sense of \cite[Section 1]{BBK}, let $s$ be a
 rational section of $\mathcal L$, and $\widehat{\div} s=(\div s, -\log \|s\|^2)
 \in \hCH$ is independent of the choice of $s$, and is denoted by $\hat{c}_1(\hat{\mathcal L})$.
 Actually, it only depends on the equivalence class of
 $\hat{\mathcal L}$. We define the Faltings height of $\mathcal Z$
 with respect to $\hat{\mathcal L}$ by
 \begin{equation}
  h_{\hat{\mathcal L}}(\mathcal Z) = h_{\widehat{\div} s}(\mathcal
  Z)
  \end{equation}
which  depends only on the equivalence class of $\hat{\mathcal L}$.

  Let  $\tilde{\mathcal T}_m$ be the closure of the arithmetic Hirzebruch-Zagier divisor $\mathcal T_m$
  in $\tilde{\mathcal M}$.   It is also the flat closure of $\tilde T_m$ where $\tilde T_m$ is the closure of
  the classical Hirzebruch-Zagier divisor $T_m$ in $\tilde{\mathcal M}(\mathbb C)$.
     Bruinier, Burgos-Gil, and K\"uhn defined in
  \cite{BBK} a pre-log-log Green function $G_m$ for  $\tilde T_m$ so that $\hat{\mathcal
  T}_m=(\tilde{\mathcal T}_m, G_m) \in \hCH$.

Let $\omega$  be the Hodge bundle on $\tilde{\CalM}$. Then the
rational sections of  $\omega^k$ can be identified with  meromorphic
Hilbert modular forms for $\SL_2(\O_F)$ of weight $k$. We give it
the following Petersson metric
\begin{align}\label{def:pet}
\|F(z_1,z_2)\|_{\Pet}= |F(z_1,z_2)|\big(16\pi ^2 y_1 y_2\big)^{k/2}
\end{align}
for a Hilbert modular form $F(z)$ of weight $k$. This gives a
metrized Hodge bundle $\hat{\omega}=(\omega, \|\,\|_{\Pet})$. This
metric is shown in \cite[Section 2]{BBK} to have pre-log growth
along the boundary, and so $\hat{c}_1(\hat{\omega}) \in \hCH$.
 It is proved in \cite[Corollary 2.4]{Ya3}
that
\begin{equation} \label{neweq7.4}
h_{\hat{\omega}} (\CM(K)) = \frac{2\# \Cm(K)}{W_K} h_{\Fal}(A)
\end{equation}
for any CM abelian surface $(A, \iota, \lambda) \in \CM(K)(\mathbb
C)$.  The following theorem is proved in \cite{BBK}.

\begin{theo} \label{newtheo7.1} (1)\quad  The generating function
$$
\hat{\phi}(\tau) = -\frac{1}2 \hat{c}_1(\hat{\omega}) + \sum_{m
>0}
  \hat{\mathcal T}_m e(m \tau)
$$
is a modular form of weight $2$, level $D$, and Nebentypus
character $(\frac{D}{})$ with values in $\hCH_{\mathbb Q}$.

(2) \quad Let $\mathcal H\mathcal Z$ be
  the subspace of $\hCH_{\mathbb Q}$ generated by $\hat{\mathcal T}_m$. Then $\mathcal H\mathcal Z$ is a finite dimensional
vector space over $\mathbb Q$.

(3) \quad Let $S$ be the set of primes split in $F$, and let $S_0$
be a finite subset of $S$. Then $\mathcal H\mathcal Z$ is generated
by $\hat{\mathcal T}_q$ with $q \in S-S_0$.
\end{theo}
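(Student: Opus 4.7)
The plan is to derive (1) from the theory of arithmetic Borcherds products combined with Serre duality for vector-valued modular forms. For each weakly holomorphic modular form $f = \sum_n c(n) q^n$ of weight $0$, level $D$, and Nebentypus $\left(\tfrac{D}{\cdot}\right)$ with integral principal part, Borcherds' construction yields a meromorphic Hilbert modular form $\Psi(f)$ on $\mathcal M$ whose divisor is $\sum_{m>0} c(-m) T_m$. Equipping $\Psi(f)$ with its Petersson norm and taking the arithmetic divisor gives a relation
\begin{equation*}
 c(0)\, \hat{c}_1(\hat\omega) + \sum_{m>0} 2\, c(-m)\, \hat{\mathcal T}_m \;=\; 0 \quad \text{in } \hCH_{\mathbb Q},
\end{equation*}
after matching the Green function $G_m$ of Bruinier--Burgos Gil--K\"uhn against $-\log\|\Psi(f)\|_{\Pet}^2$ and absorbing the correctly normalized boundary contribution from the Hodge bundle. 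Equivalently, every $\mathbb Q$-linear relation on the principal parts of $f \in M^!_0(D,(\tfrac{D}{\cdot}))$ descends to a relation on the coefficients of $\hat\phi$. By Borcherds--Serre duality between $M^!_0$ of weight $0$ and holomorphic forms of weight $2$ with the same level and character, this is precisely the condition that $\hat\phi(\tau)$ transform as a holomorphic modular form of weight $2$ with values in $\hCH_{\mathbb Q}$.

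Part (2) is then an immediate consequence: the image of the evaluation map $M_2(D,(\tfrac{D}{\cdot})) \to \hCH_{\mathbb Q}$ sending a form $F$ to any finite collection of its Fourier coefficients is finite dimensional, and by (1) the span $\mathcal H \mathcal Z$ equals the image of the coefficient functionals applied to $\hat\phi$. Hence $\dim_{\mathbb Q} \mathcal H \mathcal Z \le \dim_{\mathbb C} M_2(D,(\tfrac{D}{\cdot}))$.

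For (3), the plan is to show that the functionals $F \mapsto a_q(F)$ for $q \in S - S_0$ already separate $M_2(D,(\tfrac{D}{\cdot}))^{\vee}$. Since every element of $M_2(D,(\tfrac{D}{\cdot}))$ is a linear combination of Eisenstein series and Hecke newforms, and since $(\tfrac{D}{q}) = 1$ for $q \in S$, strong multiplicity one together with the density of split primes in $F$ guarantees that the $q$-th Fourier coefficients for $q$ ranging over any cofinite subset of $S$ determine the form. Dualizing this statement with the modularity of $\hat\phi$ from (1) shows that $\{\hat{\mathcal T}_q : q \in S - S_0\}$ already spans $\mathcal H \mathcal Z$.

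The principal obstacle lies in (1): establishing the arithmetic Borcherds identity \emph{inside} $\hCH_{\mathbb Q}$, rather than only in the usual geometric Chow group, forces one to work throughout with pre-log-log Green objects along the toroidal boundary $\tilde{\mathcal M} - \mathcal M$ in the sense of Burgos Gil--Kramer--K\"uhn. One must verify that $-\log\|\Psi(f)\|_{\Pet}^2$ is genuinely a pre-log-log Green function for its divisor, that its growth along the boundary matches $c(0)$ times that of $\hat c_1(\hat\omega)$, and that the Green function $G_m$ is normalized so that no extraneous boundary error term survives. This is the technical heart of the theorem and the substance of the work of Bruinier--Burgos Gil--K\"uhn on arithmetic Hilbert modular surfaces.
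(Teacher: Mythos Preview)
The paper does not give its own proof of this theorem: it is stated with the preamble ``The following theorem is proved in \cite{BBK}'' and is simply quoted from Bruinier--Burgos Gil--K\"uhn. So there is no proof in the paper to compare against; your sketch is essentially an outline of the BBK argument itself.

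For (1) and (2) your outline is accurate and matches the strategy in \cite{BBK}: the arithmetic Borcherds product relation together with Serre duality between weakly holomorphic forms of weight $0$ and holomorphic forms of weight $2$ is exactly how modularity of $\hat\phi$ is established, and finite-dimensionality of $\mathcal H\mathcal Z$ follows immediately. Your identification of the technical crux---showing that $-\log\|\Psi(f)\|_{\Pet}^2$ is a pre-log-log Green function whose boundary behaviour matches $c(0)\,\hat c_1(\hat\omega)$---is also correct; this is precisely the content of the relevant sections of \cite{BBK}.

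There is, however, a gap in your argument for (3). You invoke ``strong multiplicity one together with the density of split primes,'' but the primes split in $F$ have Dirichlet density $1/2$, and strong multiplicity one in the form you seem to be using (two newforms agreeing at a set of primes of sufficiently large density must coincide) requires density strictly greater than $7/8$ (Ramakrishnan). So the step ``the $q$-th Fourier coefficients for $q$ ranging over any cofinite subset of $S$ determine the form'' is not justified as written. The conclusion is still correct, but one needs a different argument: for instance, one can show directly that no nonzero form in the relevant plus space $M_2^+(D,\chi_D)$ can have $a_q=0$ for all split primes $q$ outside a finite set, using either a Rankin--Selberg estimate separating split and inert contributions, or the explicit structure of $M_2^+$ and its relation to the Fricke involution. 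Alternatively, and closer to the spirit of \cite{BBK}, one can construct by hand enough Borcherds input forms $f\in M_0^!$ whose principal parts are supported at a given $m$ together with finitely many split primes outside $S_0$, reducing (3) to a concrete statement about obstruction spaces.
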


 {\bf Proof of Theorem \ref{maintheo}}: Now we are ready to prove
 the main result of this paper. We first show that Theorem
 \ref{maintheo} holds for a prime $q$ split in $F$, strengthening
 Theorem \ref{theo1.5}. By Theorem \ref{newtheo7.1}, there are non-zero
 integers $c, c_i$ and   primes $q_i$ ($\ne q$) split in $F$ such that
 $$
 c \hat{\mathcal T}_q = \sum c_i \hat{\mathcal T}_{q_i}.
 $$
 This means that there is a (normalized integral in the sense of \cite[Page 3]{BY}) meromorphic
 function $\Psi$ such that
 $$
 \div \Psi =  c \tilde{\mathcal T}_q - \sum c_i \tilde{\mathcal T}_{q_i}. \quad
 $$
So one has  by (\ref{newneweq7.4}) and  Lemma \ref{newlem2.1}
\begin{align*}
0 &= h_{\widehat{\div}(\Psi)}(\CM(K))
\\
 &=  c \CM(K).\tilde{\mathcal T}_q
-\sum c_i \CM(K).\tilde{\mathcal T}_{q_i}- \frac{2}{W_K} \sum_{z \in
\mathrm{CM}(K)} \log |\Psi(z)|
\\
 &=
 c \CM(K). \mathcal T_q
-\sum c_i \CM(K).\mathcal
  T_{q_i}
     -\frac{2}{W_K} \sum_{z \in \mathrm{CM}(K)} \log |\Psi(z)|.
     \end{align*}
Here we  used the fact that $\CM(K)$ never meets with the boundary
of $\tilde{\mathcal M}$ and thus $\CM(K).\tilde{\mathcal
T}_m=\CM(K).\mathcal T_m$.
 By
 \cite[Theorem 1.1]{BY} (this is the place we need the condition
 that $\tilde D$ is prime), and the fact
 \begin{equation} \label{newneweq7.8}
 W_K =W_{\tilde K} =\begin{cases}
    10 &\ff K=\mathbb Q(\zeta_5),
    \\
     2 &\hbox{otherwise},
     \end{cases}
     \end{equation}
      one has
 $$
\frac{2}{W_K} \sum_{z \in \mathrm{CM} (K)} \log |\Psi(z)|
=\frac{1}{2} c b_q -\frac{1}2 \sum c_i b_{q_i}.
$$
Now applying Theorem \ref{theo1.5}, one has
$$
0 =c(\mathcal T_q.\CM(K) -\frac{1}2 b_q) - \sum c_i (\mathcal
T_{q_i}.\CM(K) -\frac{1}2 b_{q_i})
  =c c_q \log q -\sum c_i c_{q_i} \log q_i
  $$
for some rational numbers $c_q, c_i \in \mathbb Q$. Since $\log q$
and $\log q_i$ are $\mathbb Q$-linearly independent, we have $c_q
=c_{q_i}=0$, and thus
\begin{equation} \label{eq7.3}
\mathcal T_q.\CM(K) =\frac{1}2 b_{q}.
\end{equation}

Now we turn to the general case. Using again Theorem
\ref{newtheo7.1}, there are non-zero integers $c$ and $c_i$ and
primes $q_i$ split in $F$ such that
$$
c  \hat{\mathcal T}_m=\sum c_i \hat{\mathcal T}_{q_i}.
$$
So there is a (normalized integral) Hilbert  meromorphic function
$\Psi$ such that
$$
{\div}(\Psi) =c\tilde{\mathcal T}_m-\sum c_i \tilde{\mathcal
T}_{q_i}.
$$
So one has by (\ref{newneweq7.4}),  (\ref{eq7.3}) and \cite[Theorem
1.1]{BY}
\begin{align*}
0 &=h_{\widehat{\div}(\Psi)} (\CM(K))
\\
   &=c \CM(K).{\mathcal T}_m -\sum c_i\CM(K). \mathcal T_{q_i}
     -\frac{2}{W_K}\sum_{z \in \mathrm{CM}(K)} \log |\Psi(z)|
     \\
     &=c \CM(K).\mathcal T_m -\frac{1}2 c b_m.
     \end{align*}
Therefore $\mathcal T_m.\CM(K) =\frac{1}2 b_m$. This proves Theorem
\ref{maintheo}.

{\bf Proof of Theorem \ref{Colmez}}: By \cite[Theorems 4.15,
5.7]{BBK}, there is a normalized integral meromorphic  Hilbert
modular form $\Psi$ of weight $c(0) >0$ such that
$$
\div \Psi = \sum_{m>0} c_m \tilde{\mathcal T}_m.
$$
Now  the same argument as in the proof of \cite[Theorem 1.5]{m=1}
gives
\begin{equation} \label{neweq7.7}
h_{\hat{\omega}}(\CM(K)) = \frac{\#\Cm(K)}{W_K} \beta(K/F).
\end{equation}
Combining this with (\ref{neweq7.4}), one proves the theorem.

To state Theorem \ref{theo1.4} more precisely and prove it, we need
some preparation. Let
\begin{equation} \label{neweq7.8}
E_2^+(\tau) = 1+ \sum_{m>0} C(m, 0)e(n\tau), \quad C(m, 0) =
\frac{2\sum_{d|m} d}{L(-1, (\frac{D}{}))}
\end{equation}
be the Eisenstein series of weight $2$, level $D$, and Nebentypus
character $(\frac{D}{})$ given in \cite[Corollary 2.3]{BY}.

Let $\chi_{\tilde K/\tilde F}$ be the quadratic Hecke character of
$\tilde F$ associated to $\tilde K/\tilde F$, and let $I(s,
\chi_{\tilde K/\tilde F})$ be the induced representation of
$\SL_2(\A_{\tilde F})$. In \cite[Section 6]{BY}, we choose a
specific section $\Phi \in I(s, \chi_{\tilde K/\tilde F})$ and
constructed an (incoherent) Eisenstein series of weight $1$
$$
E^*(\tau_1, \tau_2, s, \Phi) = (v_1 v_2)^{-\frac{1}2}
E(g_{\tau_1} g_{\tau_2}, s, \Phi) \Lambda(s+1,
 \chi_{\tilde K/\tilde F}).
$$
Here $ \tau_j = u_j+  i v_j\in \mathbb H$. 
 The Eisenstein series
is automatically zero at $s=0$. So its diagonal restriction of
$\mathbb H$ is a modular  form of weight $2$, level $D$, Nebentypus
character $(\frac{D}{})$ which is zero at $s=0$.  Let
$$
\tilde f(\tau)=\frac{1}{ \sqrt D} E^{*, \prime}(\tau, \tau, 0,
\Phi)|_2W_D
$$
be the modular form   defined in \cite[(7.2)]{BY}) (with $K$ in
\cite[Sections 7 and 8]{BY} replaced by $\tilde K$). Here $W_D=\kzxz
{0} {-1} {D} {0}$. Finally let $f$ be the holomorphic projection of
$\tilde f$. According to \cite[Theorem 8.1]{BY}, one has the Fourier
expansion
\begin{equation}
f(\tau) = -4\sum_{m>0} (b_m +c_m + d_m) e(m\tau)
\end{equation}
where $b_m$ is the number in  Conjecture \ref{conj},
\begin{equation}
d_m= \frac{1}2 C(m, 0)\Lambda(0, \chi_{\tilde K/\tilde F})
\beta(\tilde K/\tilde F)
\end{equation}
and $c_m$ is some complicated constant defined in \cite[Theorem
8.1]{BY}.  Notice that the Green function $G_m$ in $\hat{\mathcal
T}_m$ is also the Green function used in \cite{BY}. So
\cite[(9.3)]{BY} gives ($\CM(K)$ in \cite{BY} is our $\Cm(K)$)
\begin{equation}
c_m =\frac{4}{W_{\tilde K}}G_m(\Cm(K)) =\frac{4}{W_K} G_m(\Cm(K)).
\end{equation}
As explained in the proof of \cite[Theorem 1.5]{m=1}, one has
$$
\Lambda(s, \chi_{\tilde K/\tilde F}) = \Lambda(s, \chi_{K/F}).
$$
So $\beta(\tilde K/\tilde F)=\beta(K/F)$. One has also by
\cite[(9.2)]{BY} and (\ref{newneweq7.8})
\begin{equation}
\Lambda(0, \chi_{\tilde K/\tilde F}) = \frac{2 \# \Cm(K)}{W_K}.
\end{equation}
So (\ref{neweq7.7}) implies
\begin{equation}
d_m = h_{\hat{\omega}}(\CM(K))C(m ,0).
\end{equation}
So we have
\begin{equation} \label{neweq7.14}
f(\tau) =-4 \sum_{m>0} (b_m +\frac{4}{W_K} G_m(\Cm(K))) e(m\tau)
          -4h_{\hat{\omega}}(\CM(K)) \sum_{m>0} C(m, 0) e(m\tau).
          \end{equation}

 Now we can restate Theorem \ref{theo1.4}
 more precisely:

\begin{theo} \label{theo7.1} Let the notation be as above. Assuming (\ref{eqOK}) and
that $\tilde D=\Delta \Delta' \equiv 1 \mod 4$ is a prime. Then
$$
h_{\hat\phi}(\CM(K))
 +\frac{1}2 h_{\hat{\omega}}(\CM(K)) E_2^+(\tau)  =-\frac{1}{8} f(\tau).
 $$
 \end{theo}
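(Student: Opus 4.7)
My plan is to verify the identity by a direct computation of Fourier coefficients. By linearity of the Faltings height pairing $h_{\,\cdot\,}(\CM(K))$ applied to the modular generating series $\hat\phi(\tau)$ from Theorem~\ref{newtheo7.1}, one has
\[
 h_{\hat\phi}(\CM(K)) \;=\; -\tfrac{1}{2}\,h_{\hat\omega}(\CM(K)) \;+\; \sum_{m>0} h_{\hat{\mathcal T}_m}(\CM(K))\,e(m\tau),
\]
so the problem reduces to evaluating each coefficient $h_{\hat{\mathcal T}_m}(\CM(K))$ and comparing with the corresponding coefficient on the right-hand side of the claimed identity.

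For each $m>0$, the definition~(\ref{newneweq7.4}) gives
\[
 h_{\hat{\mathcal T}_m}(\CM(K)) \;=\; \mathcal T_m.\CM(K) \;+\; \tfrac{1}{2}\sum_{z \in \CM(K)(\mathbb C)} \tfrac{1}{\#\Aut(z)}\,G_m(z).
\]
The main theorem (Theorem~\ref{maintheo}) identifies the arithmetic intersection as $\mathcal T_m.\CM(K) = \tfrac{1}{2}b_m$. The analytic part is turned into the constant $c_m$ via Lemma~\ref{newlem2.1} ($\CM(K)(\mathbb C) = 2\,\Cm(K)$), the stack-automorphism bookkeeping already used in the proof of Theorem~\ref{Colmez}, and the normalization $c_m = \tfrac{4}{W_K}G_m(\Cm(K))$ recorded just before the statement of Theorem~\ref{theo7.1}. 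Assembling these yields a closed-form expression for $h_{\hat{\mathcal T}_m}(\CM(K))$ purely in terms of $b_m$ and $c_m$.

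For the right-hand side, substituting $d_m = h_{\hat\omega}(\CM(K))\,C(m,0)$ into~(\ref{neweq7.14}) (which is just \cite[Theorem~8.1]{BY} restated) and splitting off the Eisenstein part via $E_2^+(\tau) = 1 + \sum_{m>0} C(m,0)\,e(m\tau)$ rewrites the target identity as an equality of $q$-series whose constant terms are automatically equal and whose $m$-th Fourier coefficient for $m>0$ matches the expression for $h_{\hat{\mathcal T}_m}(\CM(K))$ obtained in the previous paragraph; this finishes the proof.

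The main obstacle does not really appear in this section. All the substantive content has been folded into Theorem~\ref{maintheo} (the intersection formula $\mathcal T_m.\CM(K) = \tfrac{1}{2}b_m$) and into \cite[Theorem~8.1]{BY} (the Fourier expansion of $f$). What remains is purely formal bookkeeping of factors involving $W_K$, the doubling in Lemma~\ref{newlem2.1}, and the compatibility of Green-function conventions between \cite{BBK} and \cite{BY}---compatibility that is precisely the content of the remark preceding~(\ref{neweq7.14}).
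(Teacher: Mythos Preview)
Your proposal is correct and follows essentially the same route as the paper's own proof: expand $h_{\hat\phi}(\CM(K))$ term by term using linearity, invoke Theorem~\ref{maintheo} for $\mathcal T_m.\CM(K)=\tfrac{1}{2}b_m$, convert the Green-function sum into $c_m$ via Lemma~\ref{newlem2.1} and the $W_K$ normalization, and then match against~(\ref{neweq7.14}) using the Fourier expansion of $E_2^+$. Your closing remark that the real work is already absorbed into Theorem~\ref{maintheo} and \cite[Theorem~8.1]{BY} is exactly the point.
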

\begin{proof} By  Theorem \ref{maintheo}, (\ref{newneweq7.4}),  and (\ref{neweq7.7}), we have
\begin{align*}
h_{\hat\phi}(\CM(K))
 &=-\frac{1}2 h_{\hat{\omega}}(\CM(K)) + \sum_{m>0}
 h_{\hat{\mathcal T}}(\CM(K)) e(m\tau)
 \\
 &=-\frac{1}2 h_{\hat{\omega}}(\CM(K))
  + \sum_{m >0} (\CM(K).\mathcal T_m + \frac{2}{W_K}
  G_m(\hbox{CM}(K))) e(m\tau)
  \\
   &=-\frac{1}2 h_{\hat{\omega}}(\CM(K)) + \frac{1}2\sum_{m>0}
  (
   b_m + \frac{4}{W_K}
  G_m(\hbox{CM}(K))) e(m\tau).
  \end{align*}
Combining this with (\ref{neweq7.8}) and (\ref{neweq7.14}), one
proves the theorem.
\end{proof}

\section{Siegel modular variety of genus $2$ and Lauter's conjecture}
\label{sect8}

Following \cite{CF}, let $\mathcal A_2$ be the moduli stack over
$\mathbb Z$ representing the principally
 polarized abelian surfaces $(A, \lambda)$. Then
  $\mathcal A_2(\mathbb C)= \Sp_2(\mathbb Z)\backslash \mathbb H_2
  $  is  the Siegel modular surface of genus $2$.   Here
  $\mathbb H_2 = \{ Z\in \operatorname{Mat}_2(\C);\; Z={}^tZ, \,
\Im(Z)>0\}$ is the Siegel upper half plane of genus two. Let
$\epsilon$ be a fixed fundamental unit if $F=\mathbb Q(\sqrt D)$
with $\epsilon >0$ and $\epsilon' <0$. Then
 \begin{equation}
 \phi_D: \mathcal M \rightarrow \mathcal A_2, \quad (A, \iota,
 \lambda)  \mapsto (A, \lambda(\frac{\epsilon}{\sqrt D}))
 \end{equation}
 is a natural map from $\mathcal M$ to $\mathcal A_2$, which is
 proper and generically $2$ to $1$. For an integer $m \ge 1$, let
 $G_m$ be the Humbert surface in $\mathcal A_2(\mathbb Q)$
 \cite[Chapter IX]{Ge}, defined  as follows (over $\mathbb C$). Let $L=\mathbb
 Z^5$ be with the quadratic form
 $$
 Q(a, b, c, d, e) = b^2 - 4 ac-4 de.
 $$
  We remark that there is an isomorphism between $\Sp_2(\mathbb Q)/\{\pm 1\}$
 and  $\operatorname{SO}(L\otimes \mathbb Q)$.  For $x \in L$ with $Q(x) >0$, we define
$$
H_x=\{ \tau =\kzxz {\tau_1}  {\tau_2} {\tau_2} {\tau_3} \in
\mathbb H_2:\, a \tau_1 + b \tau_2 + c\tau_3 + d (\tau_2^2-\tau_1
\tau_3) + e=0\}.
$$
Then $H_x$ is a copy of $\mathbb H^2$ embedded into $\mathbb H_2$.
The Humbert surface $G_m$ is then defined by
\begin{equation}
G_m = \Sp_2(\mathbb Z) \backslash \{ H_x:\, x \in L, Q(x)
=m\}.
\end{equation}

  Let $\mathcal G_m$ be
 the flat closure of $G_m$ in $\mathcal A_2$. Then $(\phi_D)_*
 \mathcal M =2 \mathcal G_D$, and
 \begin{equation} \label{neweq8.2}
 \phi_D^* \mathcal G_m = \sum_{n>0, \frac{Dm-n^2}4 \in  \mathbb Z_{>0}} \mathcal T_{\frac{Dm-n^2}4}
 \end{equation}
 when $m D$ is a not a square. Indeed, it is known \cite[Theorem 3.3.5]{Fr}, \cite[Proposition IX 2.8]{Ge} that
 $$
 \phi_D^*  G_m = \sum_{n>0, \frac{Dm-n^2}4 \in  \mathbb Z_{>0}} T_{\frac{Dm-n^2}4}.
 $$
So their flat closures in $\mathcal M$ are equal too, which is
(\ref{neweq8.2}).

 Let $K$ be a quartic CM number field with real quadratic subfield $F$, and let
 $\CM_S(K)$ be the moduli stack over $\mathbb Z$ representing the
  moduli problem which assigns a scheme $S$ the set of triples
  $(A, \iota, \lambda)$ where $(A, \lambda) \in \mathcal A_2(S)$
  and $\iota$ is an $\O_K$-action on $A$ such that the Rosati
  involution associated to $\lambda$ gives the complex
  conjugation on $K$. Notice that the map
  $$
  \CM(K) \rightarrow \CM_S(K), \quad (A, \iota, \lambda)  \mapsto
  (A, \iota, \lambda(\frac{\epsilon}{\sqrt D}))
  $$
  is an isomorphism of stacks.
  We also denote $\CM_S(K)$ for the direct
  image of $\CM_S(K)$  in $\mathcal A_2$ under the forgetful map
  (forgetting the $\O_K$ action). Then the above isomorphism
  implies that $(\phi_D)_*(\CM(K)) = \CM_S(K)$. Now the proof of
  Theorem \ref{siegel} is easy.

  {\bf Proof of Theorem \ref{siegel}}: By  the projection formula, Theorem \ref{maintheo},  and
  remarks above, one has
  \begin{align*}
  \CM_S(K). \mathcal G_m
   &=(\phi_D)_*(\CM(K)).\mathcal G_m
   \\
    &= \CM(K). \phi_D^*(\mathcal G_m)
    \\
     &= \sum_{n>0, \frac{Dm-n^2}{4} \in \mathbb Z_{>0}}
     \CM(K).\mathcal T_{\frac{Dm-n^2}4}
     \\
      &=\frac{1}2 \sum_{n>0, \frac{Dm-n^2}{4} \in \mathbb
      Z_{>0}}b_{\frac{Dm-n^2}4}
      \end{align*}
as claimed.

To describe and prove Lauter's conjecture on Igusa invariants, we
need more notations. Let
\begin{equation}
\theta_{a, b}(\tau, z) =\sum_{n \in \mathbb Z^2} e^{\pi i
{}^t(n+\frac{1}2 a) \tau (n+ \frac{1}2 a)+ 2 {}^t(n+\frac{1}2 a)(z+
\frac{1}2 b)}
\end{equation}
be the theta functions  on $\mathbb H_2 \times \mathbb C^2$ with
characters $a, b \in (\mathbb Z/2)^2$. It is zero at $z=0$  unless
${}^ta b \equiv 0 \mod 2$. In such a case, we call $\theta_{a,
b}(\tau, 0)$ an even theta constants. There are exactly ten of them,
we renumbering them as $\theta_i$, $1 \le i \le 10$. They  are
Siegel modular forms of weight $1/2$ and some level.
$$
h_{10}=\prod_{i} \theta_i^2
$$
is a cusp form of weight $10$ and  level $1$ and is the famous Igusa
cusp form $\chi_{10}$. Igusa also defines in \cite{Ig} three other
Siegel modular forms $h_4=\sum_i \theta_i^8$, $h_{12}$, and $h_{16}$
for $\hbox{Sp}_2(\mathbb Z)$ of weight $4$, $12$, and $16$
respectively as polynomials of these even theta constants. We refer
to \cite{Wen} for the precise definition of $h_{12}$ and $h_{16}$
since they are complicated and not essential to us. The so-called
$3$ Igusa invariants are defined as (\cite[Section 5]{Wen}
\begin{equation}
j_1 = \frac{h_{12}^5}{h_{10}^6}, \quad j_2 = \frac{h_4
h_{12}^3}{h_{10}^4}, \quad j_3 = \frac{h_{16}h_{12}^2}{h_{10}^4}.
\end{equation}

It is known that $h_i$ have integral Fourier coefficients.  Since
four of ten theta constants have constant term $1$ and the other
six are multiples  of $2$, one can  check (\cite{GN})
$$
h_{10} = 2^{12} \Psi_{1, S}
$$
where $\Psi_{1, S}$ is an integral Siegel modular form for
$\Sp_2(\mathbb Z)$ with  constant term $1$ and $\div \Psi_{1, S} =2
\mathcal G_1$.  One can also check
$$
h_4=2^4 \tilde h_4, \quad h_{12} =2^{15} \tilde h_{12}, \quad
h_{16}=2^{15} \tilde h_{16}
$$
with $\tilde h_4, \tilde h_{12}$, and $\tilde h_{16}$ still having
integral coefficients. So
\begin{equation} \label{neweq8.5}
j_1 = 2^3 \frac{\tilde h_{12}^5}{\Psi_{1, S}^{6}}, \quad j_2 = 2
\frac{\tilde h_4 \tilde h_{12}}{\Psi_{1, S}^4}, \quad j_3 =2^{-3}
\frac{\tilde h_{12} \tilde h_{16}}{\Psi_{1, S}^4}.
\end{equation}
We renormalize
\begin{equation}
j_1=2^3 B_1 j_1',  \quad j_2 =2 B_2 j_2', \quad j_3 =2^{-3} B_3 j_3'
\end{equation}
for some positive integers $B_i$ so that $j_i'$ can be written as
$$
j_i'=\frac{f_i}{\Psi_{1, S}^{n_i}}$$ with $n_1=6, n_2=n_3=4$
such  that $f_i$ are integral Siegel modular forms whose Fourier
coefficients have greatest common divisor $1$.
.

 Let $K$ be a quartic non-biquadratic CM number field with real
 quadratic subfield $F=\mathbb Q(\sqrt D)$. For a CM type $\Phi$ of $K$,
 let $\Cm_S(K, \Phi)$ be the formal
 sum of principally polarized abelian surfaces over $\mathbb C$
 of CM type $(\O_K, \Phi)$ (up to isomorphism). It is the image of
 $\Cm(K, \Phi)$ under $\phi_D$. So $\Cm_S(K)=\Cm_S(K, \Phi_1)
 +\Cm(K, \Phi_2)$ is defined over $\mathbb Q$ and
 $$
 \CM_S(K)(\mathbb C) = 2\Cm_S(K).
 $$
Here $\Phi_1$ and $\Phi_2$ are two CM types of $K$ such that
$\Phi_i$ and $\rho\Phi_i$ give all CM types of $K$ ($\rho$ is the
complex conjugation). By the theory of complex multiplication
\cite[Main Theorem 1, page 112]{Sh},
$$
j_i'(\hbox{CM}_S(K)):=\prod_{z \in \Cm_S(K)} j_i'(z)
$$
is a power of $ \norm(j_i'(z))$ for any CM point $z \in \Cm_S(K)$.
So Theorem \ref{Lauter} is a consequence of the following theorem.

{\bf Proof of Theorem \ref{Lauter}}:
 We prove the theorem for $j_1'$. The proof for $j_2'$
and $j_3'$ is  the same. We first prove $ A_1 \norm(j_1'(\tau))
\in \mathbb Z$. By the theory of complex multiplication \cite[Main
Theorem 1, page 112]{Sh},
$$
j_i'(\hbox{CM}_S(K)):=\prod_{\tau \in \Cm_S(K)} j_i'(\tau)
$$
is a power of $ \norm(j_i'(\tau))$ for any CM point $\tau \in
\Cm_S(K)$. Since $\CL_0(K) \cong \CL_0(\tilde K)$ in our case by
\cite[Lemma 5.3]{BY}, we have actually
$j_i'(\hbox{CM}_S(K))=\norm(j_1'(\tau))$.

Notice that
$$
\div  j_1' = \div f_1  -12 \mathcal G_1.
$$
If $\CM(K)$ and $\div f_1$ intersect improperly,  they have a
common point over $\mathbb C$ (since both are horizontal). So
$f_1(\hbox{CM}(K) =0$ and  $j_1'(\hbox{CM}_S(K))=0$,  there is
nothing to prove. So we may assume $\CM(K)$ and $\div f_1$
intersect properly. Since both are effective cycles, one has $$
\CM(K).\div f_1 =a \log C$$ for some positive integer $C
>0$ and a rational number $a >0$. Now
\begin{align*}
0&=h_{\widehat{\div} j_1'}(\CM(K))
\\
 &=\CM(K).\div f_1 -12 \CM(K).\mathcal
 G_1 - \frac{2}{W_K} \log |j_1'(\hbox{CM}_S(K))|
 \\
  &=\CM(K).\div f_1-6 \sum_{0 < n <\sqrt D, odd} b_{\frac{D-n^2}4}
    -\frac{2}{W_K}\log|j_1'(\hbox{CM}_S(K)|.
    \end{align*}
Write $\norm(j_1'(\tau)) =M_1/N_1$ with $(M_1, N_1)=1$. Then
$$
\log |M_1| -\log N_1= \log|j_1'(\hbox{CM}_S(K)| = \frac{a W_K}2
\log C -3 W_K \sum_{0 < n <\sqrt D, odd} b_{\frac{D-n^2}4},
$$
and so
$$\log N_1 =3 W_K
\sum_{0 < n <\sqrt D, odd} b_{\frac{D-n^2}4} +\log |M_1| -\frac{a
W_k}2 \log C= \log A_1 +\log |M_1| -\frac{a W_k}2 \log C.
$$
So $N_1 C^{\frac{aW_K}2} =A_1 |M_1|$,  and thus $N_1 |A_1$. $A_1
\norm(j_1'(\tau)) \in \mathbb Z$.

We now derive $A_1 H_1(x) \in \mathbb Z$. The $k$-th coefficient
of $H_1(x)$ is
$$
a_k= \sum_{ i_1 \le i_2 \le \cdots \le i_k}  j_1'(\tau_{i_1})
\cdots j_1'(\tau_{i_k})
$$
where $\tau_{j} \in \CM_S(K)$.  Write
$$
j_1'(\tau_j)\O_L =\frac{\mathfrak a_j}{\mathfrak b_j}
$$
uniquely with $\mathfrak a_j, \mathfrak b_j$ being integral ideals
of $\O_L$, where $L$ is a Galois extension of $\mathbb Q$
containing all $j_1'(\tau_j)$. Then  $N_1\mathbb Z =\prod
\mathfrak b_j$.  So
$$
a_k \mathbb Z = \mathfrak c/N_1
$$
where
$$
\mathfrak c=\sum \prod_{l=1}^k \mathfrak a_{i_l} \prod_{j \ne i_l}
\mathfrak b_j
$$
is an integral ideal of $L$. So $\mathfrak c= c\mathbb Z$ for some
integer $c$, and thus $a_k = \pm c/N_1$. That is $A a_k \in
\mathbb Z$. This proves Theorem \ref{Lauter}

\end{document}